\documentclass{amsart}

\newtheorem{theorem}{Theorem}[section]
\newtheorem{lemma}[theorem]{Lemma}

\theoremstyle{definition}
\newtheorem{definition}[theorem]{Definition}

\theoremstyle{remark}
\newtheorem{remark}[theorem]{Remark}

\numberwithin{equation}{section}
\numberwithin{figure}{section}

\usepackage{graphicx}
\usepackage{epsfig}
\usepackage{epsf}

\newtheorem{corollary}{Corollary}[section]

\newcommand{\cN}{{\cal N}}

\newcommand{\eps}{\epsilon}
\newcommand{\bitem}{\begin{itemize}}
\newcommand{\eitem}{\end{itemize}}
\newcommand{\goto}{\rightarrow}

\newcommand{\sgn}{\mbox{sgn}}
\newcommand{\beqn}{\begin{equation}}
\newcommand{\eeqn}{\end{equation}}
\newcommand{\balign}{\begin{align}}
\newcommand{\ealign}{\end{align}}

\def\cal{\mathcal}

\def\RR{{\mathbb R}}

\def\po{\RR_+^N}
\def\h{H^N}
\def\na{{\cal N}(A)}

\def\psiwpo{\psi_W^{\RR_+}}
\def\psiwh{\psi_W^{H}}
\def\psispo{\psi_S^{\RR_+}}

\begin{document}

\title[Faces of the Randomly-Projected Hypercube and Orthant]{Counting the Faces of Randomly-Projected Hypercubes and Orthants, with Applications}

\author{David L. Donoho}
\address{Department of Statistics, Stanford University}
\curraddr{Department of Statistics, Stanford University}
\email{donoho@stanford.edu}
\thanks{The authors would like to thank the Isaac Newton Mathematical
Institute at Cambridge University for hosting
the programme
"Statistical Challenges of High Dimensional Data" in 2008.
and Professor D.M.
Titterington for organizing this programme.
DLD acknowledges support 
from NSF DMS 05-05303 and a Rothschild Visiting Professorship
at the University of Cambridge.}

\author{Jared Tanner}
\address{School of Mathematics, University of Edinburgh}
\curraddr{School of Mathematics, University of Edinburgh}
\email{jared.tanner@ed.ac.uk}
\thanks{JT acknowledges support from the Alfred P. Sloan Foundation
  and thanks John E. and Marva M. Warnock for their generous support
  in the form of an endowed chair.}

\subjclass[2000]{52A22, 52B05, 52B11, 52B12, 62E20, 68P30, 68P25, 68W20, 68W40, 94B20  94B35, 94B65, 94B70}


\date{May 2008}


\begin{abstract}
Let $A$ be an $n$ by $N$ real valued random matrix, and $\h$ denote the
$N$-dimensional hypercube.  For numerous random matrix ensembles,
the expected number of $k$-dimensional 
faces of the random $n$-dimensional zonotope $A\h$
obeys the formula  $E f_k(A\h) /f_k(\h) = 1-P_{N-n,N-k}$, where
$P_{N-n,N-k}$ is a fair-coin-tossing probability:
\[
P_{N-n,N-k} \equiv  \mbox{Prob}
\{ N-k-1 \mbox{ or fewer successes in }  N-n-1 \mbox{ tosses  }\}.
 \]
The formula applies, for example, where the columns of $A$ 
are drawn i.i.d. from an absolutely continuous  symmetric
distribution.  The formula exploits Wendel's 
Theorem\cite{We62}.  


Let $\po$ denote the positive orthant; the
expected number of $k$-faces of  the random cone$A \po$ obeys
$ {\cal E} f_k(A\po) /f_k(\po) = 1 - P_{N-n,N-k}$.
The formula applies to numerous matrix ensembles, including those
with iid random columns from an absolutely continuous, centrally symmetric
distribution.  

The probabilities
$P_{N-n,N-k}$ change rapidly  from nearly 0 to nearly 1
near $k \approx 2n-N$. 
Consequently, there is an asymptotically sharp threshold in
the behavior of face counts of the projected hypercube; thresholds known for
projecting the simplex and the cross-polytope, occur at very different locations.
We briefly consider face counts  of the projected orthant when
$A$ does not have mean zero; these do behave similarly to
those for the projected simplex.  We consider non-random projectors
of the orthant; the 'best possible' $A$ is the one associated with
the first $n$ rows of the Fourier matrix.


These geometric face-counting results 
have implications  for signal processing, information theory, inverse problems, and 
optimization.  Most of these flow in some way from the fact that face counting
is related to conditions for uniqueness of solutions of 
underdetermined systems of linear equations.

\noindent  
a)  A vector in $\po$ is called $k$-sparse
if it has at most $k$ nonzeros. For such a $k$-sparse vector $x_0$, let $b = Ax_0$,
where $A$ is a random matrix ensemble covered by our results.
With probability $1 - P_{N-n,N-k}$
the inequality-constrained system $Ax=b$, $x\ge 0$ 
has $x_0$ as its unique nonnegative solution.
This is so, even if $n < N$, so that the system $Ax=b$ is underdetermined.

\noindent
b) A vector in the hypercube $\h$
will be called $k$-simple if all entries except at most $k$ 
are at the bounds 
0 or 1.  For such a $k$-simple vector $x_0$, let $b = Ax_0$,
where $A$ is a random matrix ensemble covered by our results.
With probability  $1 - P_{N-n,N-k}$
the inequality-constrained system $Ax=b$, $x \in \h$ 
has $x_0$ as its unique solution in the hypercube.

\end{abstract}

\maketitle

{\bf Keywords.}
Zonotope, Random Polytopes, Random Cones,
Wendel's Theorem, Threshold Phenomena,
Universality, Random Matrices, Compressed Sensing,
Unique Solution of Underdetermined Systems of Linear Equations.


\section{Introduction}\label{sec:intro}
\setcounter{equation}{0}
\setcounter{table}{0}
\setcounter{figure}{0}

There are 3 fundamental {\em regular} polytopes in $\RR^N$, $N\ge 5$: 
the hypercube $\h$,
the cross-polytope $C^N$, and the simplex $T^{N-1}$.
For each of these, projecting the vertices into 
$\RR^n$, $n <N$,  yields the vertices of a new polytope; in fact, every 
polytope in $R^n$ can be generated by rotating the  
simplex $T^{N-1}$ and orthogonally projecting on the 
first $n$ coordinates, for some
choice of $N$ and of $N$-dimensional rotation. Similarly, 
every centro-symmetric polytope can be generated by projecting the 
cross-polytope, and every zonotope by projecting the hypercube.

\subsection{Random polytopes}\label{subsec:polytope}
Choosing the projection $A$
at random has become popular. 
Let $A$ be an $n\times N$ uniformly distributed random
orthogonal projection, obtained by first applying a uniformly-distributed
rotation to $\RR^N$ and then projecting on the
first $n$ coordinates. Let $Q$ be a polytope in $\RR^N$.  Then 
$AQ$ is a random polytope in $\RR^n$.   Taking $Q$ in turn from each of the
three families of regular polytopes we get three arenas for scholarly study:
\bitem
\item 
Random polytopes of the form $AT^{N-1}$  were
first studied by  Affentranger and Schneider \cite{AffenSchnei} and by  Vershik and Sporyshev \cite{VerSpor};
\item  Random polytopes of the form
$AC^N$  were first studied extensively by 
Borozcky and Henk \cite{BoroHenk};
\item The random zonotope $A\h$ will 
be heavily studied in this paper; beginnings of a literature on 
zonotopes can be found in \cite{Bolker,Bary}.
\eitem
Such random polytopes can have face lattices
undergoing abrupt changes in 
properties as dimensions change only slightly.
In the case of $AT^{N-1}$ and $AC^N$, previous work by the authors 
\cite{Do05_polytope,DoTa05_polytope,DoTa07_CF,DoTa07_FiniteN} 
documented the following {\t threshold} phenomenon. 
(Our work built on fundamental formulas developed by Affentranger and 
Schneider \cite{AffenSchnei} and used an asymptotic
framework pioneered by  Vershik and Sporyshev \cite{VerSpor},
who pointed to the first such threshold effect).
Let $f_k(Q)$ denote the number of $k$-dimensional
faces of polyhedron $Q$.  It turns out that for large $n$,
the number of $k$-dimensional faces of $f_k(AQ)$
might either be approximately equal to $f_k(Q)$ or else significantly smaller,
depending on the size of $k$ relative to a threshold depending on
the ratio of  $n$ to $N$.

To make this precise, consider the following {\it proportional-dimensional} asymptotic framework.
A {\it dimension specifier} is a triple of integers $(k,n,N)$,  representing
a `face' dimension $k$, a `small' dimension $n$ and  a `large' dimension $N$;  $k < n < N$.
 For fixed $ \delta, \rho \in (0,1)$,
consider sequences of dimension specifiers, indexed by $n$, and obeying
\begin{equation} \label{eq:propdim}
k_n/n \goto\rho\quad \mbox{ and }\quad n/N_n \goto\delta .
\end{equation}
For such sequences the small dimension $n$ is held proportional to the large dimension $N$
as both dimensions grow. We omit subscripts on $k_n$ and $N_n$ when possible.
For $Q = T^{N-1}$, $C^N$,  the papers
\cite{Do05_polytope,DoTa05_polytope,DoTa07_CF,DoTa07_FiniteN} 
exhibited thresholds $\rho(\delta;Q)$  for 
for the ratio between the expected number of faces of the low-dimensional  polytope $AQ$ and 
the number of faces of the high-dimensional polytope $Q$:
\begin{equation}\label{eq:weak}
\lim_{n \goto \infty} \frac{{\cal E}f_k(AQ)}{f_k(Q)}  \quad 
\left\{\begin{array}{cl}
=1 & \rho<\rho_W(\delta;Q) \\
<1 & \rho>\rho_W(\delta;Q)
\end{array}\right. .
\end{equation}
(In this relation, we take a limit as $n \goto \infty$ along some
sequence obeying the proportional-dimensional constraint (\ref{eq:propdim})).
In words, the random object $AQ$ has roughly as 
many $k$-faces as its generator $Q$, for $k$ below a threshold; and has 
noticeably fewer $k$-faces than $Q$, for $k$ 
above the threshold.  The threshold functions are defined in terms
of Gaussian integrals and other special functions, and can be calculated 
numerically.

These phenomena, described here from the viewpoint of
combinatorial geometry, have surprising consequences in probability theory,
information theory and 
signal processing; see \cite{Do05_signal,DoTa05_signal,DoTa07_CF}, and 
Section \ref{sec:cs} below.

\subsection{Random Zonotopes}
Missing from the above picture is information about the third family of
regular polytopes, the hypercube.  B\"{o}r\"{o}czky and Henk 
\cite{BoroHenk} discussed it in passing, but only considered 
the asymptotic framework 
where the small dimension $n$ is held fixed  while the large 
dimension $N\goto\infty$.
In that framework, the threshold phenomenon is not visible.
In this paper, we again consider the 
proportional-dimensional case   (\ref{eq:propdim}) and prove the 
following.

\begin{theorem}[`Weak' Threshold for Hypercube]\label{thm:main_weak_hypercube}
Let
\begin{equation}\label{eq:rhoW}
\rho_W(\delta;\h):=\max{(0,2-\delta^{-1})}.
\end{equation}
For $\rho$,$\delta$ in $(0,1)$, consider a sequence  of dimension specifiers  $(k,n,N)$ obeying 
(\ref{eq:propdim}). 
Let $A$ denote a uniformly-distributed random orthogonal projection from $\RR^{N}$ to $\RR^n$.
\begin{equation} \label{eq:hyp_weak}
\lim_{n \goto \infty}  \frac{{\cal E}f_k(A\h)}{f_k(\h)} = 
\left\{\begin{array}{cl}
1, & \rho<\rho_W(\delta,\h) \\
0, & \rho>\rho_W(\delta,\h)
\end{array}\right. .
\end{equation}
\end{theorem}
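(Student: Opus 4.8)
The plan is to deduce the statement as a straightforward asymptotic consequence of the exact finite-$N$ face-count identity recorded in the abstract, ${\cal E}f_k(A\h)/f_k(\h)=1-P_{N-n,N-k}$, which I take as given (establishing it is the substantive part of the paper). Briefly: a $k$-face of the zonotope $A\h=\sum_{i=1}^N[0,a_i]$ is specified by a $k$-element ``free set'' $S$ of columns spanning a $k$-plane, together with a sign pattern on $S^c$ realized by some vector in the $(n-k)$-plane $V_S^\perp$; so for fixed $S$ the number of such faces is the cell count $2\sum_{j=0}^{n-k-1}\binom{N-k-1}{j}$ of a generic central arrangement of $N-k$ hyperplanes in $\RR^{n-k}$ --- a Wendel-type quantity \cite{We62}. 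In pleasant contrast to the $T^{N-1}$ and $C^N$ cases, where the analogous ratio involves solid-angle and Gaussian integrals, here the ratio is an elementary binomial CDF, so the threshold analysis is elementary large-deviations bookkeeping.

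First I would rewrite the identity in the form
\[
\frac{{\cal E}f_k(A\h)}{f_k(\h)}\;=\;2^{-(N-k-1)}\sum_{j=0}^{n-k-1}\binom{N-k-1}{j}\;=\;\Pr\{\,B_{N-k-1}\le n-k-1\,\},
\]
where $B_m$ denotes a $\mathrm{Binomial}(m,1/2)$ variable; the middle expression is just the fraction of the $2^{N-k}$ fixed-coordinate patterns of a $k$-face class of $\h$ that survive projection, and by the symmetry of the fair coin it equals $1-P_{N-n,N-k}$. Next I set $m_n:=N-k-1$ and $t_n:=n-k-1$ and observe that, under the proportional-dimensional regime (\ref{eq:propdim}), $m_n\goto\infty$ (since $k/N\goto\rho\delta<1$), $t_n\goto\infty$, and
\[
\frac{t_n}{m_n}\;=\;\frac{n-k-1}{N-k-1}\;\goto\;\frac{1-\rho}{\delta^{-1}-\rho}\;=\;\frac{\delta(1-\rho)}{1-\rho\delta}\;=:\;\beta(\rho,\delta).
\]

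The asymptotics are then the law-of-large-numbers dichotomy for a binomial whose trial count and threshold both grow linearly in $N$: since $B_m/m\goto 1/2$ with Hoeffding-type concentration, if $\beta(\rho,\delta)<1/2$ then eventually $t_n/m_n<1/2-\eps$ and $\Pr\{B_{m_n}\le t_n\}\le\exp(-2\eps^2 m_n)\goto 0$, whereas if $\beta(\rho,\delta)>1/2$ the complementary tail bound gives $\Pr\{B_{m_n}\le t_n\}\goto 1$. Locating the transition is a one-line computation: since $1-\rho\delta>0$,
\[
\beta(\rho,\delta)<1/2\iff 2\delta(1-\rho)<1-\rho\delta\iff \delta(2-\rho)<1\iff \rho>2-\delta^{-1},
\]
and symmetrically $\beta(\rho,\delta)>1/2\iff\rho<2-\delta^{-1}$. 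Hence, when $\delta>1/2$ we have $\rho_W(\delta;\h)=2-\delta^{-1}\in(0,1)$ and the ratio tends to $1$ for $\rho<\rho_W$ and to $0$ for $\rho>\rho_W$; when $\delta\le 1/2$ we have $2-\delta^{-1}\le 0<\rho$ for every admissible $\rho$, so $\beta(\rho,\delta)<1/2$ throughout and the ratio tends to $0$, matching $\rho_W(\delta;\h)=\max(0,2-\delta^{-1})=0$. The critical line $\rho=\rho_W$ is excluded from the statement, so nothing more is needed there.

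I expect no genuine obstacle in this asymptotic part; all the difficulty actually lives in the exact identity ${\cal E}f_k(A\h)/f_k(\h)=1-P_{N-n,N-k}$, which is assumed. Within the asymptotic argument the only points needing mild care are: checking $m_n=N-k-1\goto\infty$, which holds on all of $(0,1)^2$ because $\rho\delta<1$; making sure the degenerate regime $\delta\le 1/2$, where $\rho_W$ is pinned at $0$, lands in the ``$\goto 0$'' branch; and reading the fair-coin probability with trial count $N-k-1$ rather than $N-n-1$ (interchangeable since the coin is fair, but only this reading makes the exponents consistent with $f_k(\h)=\binom{N}{k}2^{N-k}$).
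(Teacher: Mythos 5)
Your proposal is correct, and its architecture is the same as the paper's: Theorem \ref{thm:main_weak_hypercube} is obtained there by combining the exact identity ${\cal E}f_k(A\h)/f_k(\h)=1-P_{N-n,N-k}$ (Theorem \ref{thm:finite_samp_hyp}, derived from Theorem \ref{thm:isom} together with the symmetry Lemma \ref{lem:hyper_orth}) with an asymptotic analysis of that binomial probability, which is precisely your plan; your rewriting as $\Pr\{B_{N-k-1}\le n-k-1\}$ is the reading consistent with Theorem \ref{thm:wendel} (the abstract's verbal description is garbled, and your aside that the two trial-count readings are ``interchangeable'' is not literally true, but you chose the right one). The differences are two. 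First, you take the exact identity as given and sketch, in passing, a deterministic justification via the region count $2\sum_{j=0}^{n-k-1}{N-k-1 \choose j}$ of a generic central arrangement; that is a genuinely different route from the paper's, which applies Wendel's theorem to the rows of a nullspace basis for the orthant (Lemma \ref{lem:trans_wendel}) and transfers to the hypercube by orthant symmetry (Lemma \ref{lem:hyper_orth}), and if developed your sketch would even give the stronger statement that the ratio is deterministic for generic $A$ --- but as written it is only a sketch, so for this theorem you are effectively citing Theorem \ref{thm:finite_samp_hyp}, exactly as Section \ref{sec:weak_hypercube_proof} of the paper does. Second, for the limit you use the law-of-large-numbers/Hoeffding dichotomy at $t_n/m_n\to\beta(\rho,\delta)$ versus $1/2$, whereas the paper uses the entropy-exponent bound of Lemma \ref{lem:regime1} together with the symmetry $P_{m,M}=1-P_{M-m,M}$ for the upper branch, and identifies the threshold as the zero-level curve of $\psiwpo$. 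Your route is simpler, is fully sufficient for the weak threshold, and has the merit of verifying explicitly that the critical value is $\rho=2-\delta^{-1}$ (with the $\delta\le 1/2$ regime correctly landing in the ``$\to 0$'' branch), a closed-form identification the paper asserts rather than checks; what the paper's sharper exponential bound buys is its reuse, after the union bound over ${N \choose k}$ faces, in the proof of the strong threshold (Theorem \ref{thm:main_strong}), which is not needed for the statement at hand.
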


Thus we prove a sharp discontinuity in the behavior of the face 
lattices of random 
zonotopes; the location  of the threshold is precisely identified.  
(Such sharpness of the 
phase transition is also observed empirically for (\ref{eq:weak}) above;
to our knowledge, a proof of discontinuity has not yet been published in
that setting. ) Our use of the modifier `weak' and the subscript $W$ on $\rho$
matches usage in the previous cases  $T^{N-1}$ and $C^N$.

Although this result
has been stated in the language of combinatorial convexity, 
as with the earlier results for $AT^{N-1}$ and $AC^N$,  there are
implications for applied fields including optimization and signal 
processing, see Section \ref{sec:cs} below.

\subsection{More General Notion of Random Projection}
In fact, Theorem \ref{thm:main_weak_hypercube} 
is only the tip of the iceberg.  The ensemble of random matrices
used in that result - uniformly distributed random orthoprojector - 
is only one example of a random matrix ensemble for  which the conclusion
(\ref{eq:hyp_weak}) holds.  
As it turns out, what really matters
are the statistical properties of the nullspace of $A$.

\begin{definition}[Orthant-Symmetry]\label{def:os}
Let $B$ be a random $N-n$ by $N$ matrix such that
for each diagonal matrix $S$ with diagonal in $\{-1,1\}^N$, 
and for every measurable set $\Omega$, 
\[
\mbox{Prob}\{BS\in\Omega\}=\mbox{Prob}\{B\in\Omega\}.
\]
Then we say that $B$ is an {\em orthant-symmetric} random matrix.  Let 
$V_B$ be the linear span of the rows of $B$.  If $B$ is an 
orthant-symmetric random matrix we say that $V$ is an {\it orthant-symmetric 
random subspace}.  
\end{definition}

\begin{remark}[Orthant-Symmetric Ensembles]
The following ensembles of random matrices are orthant-symmetric:
\begin{itemize}
\item {\it Uniformly-distributed Random orthoprojectors} from $\RR^N$ to
$\RR^{N-n}$; implicitly this was the example considered earlier.
\item {\it Gaussian Ensembles}. A random matrix $B$ with entries chosen
from a Gaussian zero-mean distribution,
i.e. such that the $(N-n) \cdot N$-element vector
$vec(B)$ is $N(0,\Sigma)$ with $\Sigma$ a nondegenerate covariance
matrix.
\item {\it Symmetric i.i.d. Ensembles.} 
Matrices with entries sampled i.i.d. from a symmetric probability distribution; examples include
Gaussian $N(0,1)$, uniform on $[-1,1]$, 
uniform from the set $\{-1,1\}$, and from the set $\{-1,0,1\}$ where $-1$ and $1$ 
have equal non-zero probability.
\item {\it Sign Ensembles.} For {\em any} fixed generator matrix $B_0$, let 
the random matrix
$B=B_0S$ where $S$ is a random diagonal matrix with entries drawn 
uniformly from $\{-1,1\}$.
\end{itemize}
\end{remark}

New orthant-symmetric ensembles can  be created from an
existing one by multiplying on the left by an arbitrary random matrix
$T$ which is stochastically independent of $B$, and multiplying
on the right by a random diagonal matrix $R$ also stochastically independent
of $B$ and $T$: thus $B' = TBR$ inherits orthant symmetry from $B$.

\begin{definition}[General Position]\label{def:gp}
Let $B$ be a random $N-n$ by $N$ matrix such that every subset of
$N-n$ columns is almost surely linearly independent.  Let 
$V_B$ be the linear span of the rows of $B$. 
We say that $V_B$ is a {\it generic} random subspace.  
\end{definition}

Many orthant-symmetric ensembles from our list create
generic row spaces: 
\bitem
\item Uniformly-distributed random orthoprojectors;
\item Gaussian Ensembles; 
\item Symmetric iid ensembles having an absolutely continuous distribution;
\item Sign Ensembles with generator matrix $B_0$ having its columns in general position;
\eitem
Define a {\it censored symmetric iid ensemble} 
as a symmetric iid ensemble from which we discard realizations $B$
where the columns happen to be not in general position. Censoring
a symmetric iid ensemble made from the Bernoulli $\{-1,+1\}$
coin tossing distribution produces a new random matrix model $\tilde{B}$
whose realizations are in general position with probability one.
(The probability of a censoring event is
exponentially small in $N$, \cite{RV08}).

\begin{theorem}[`Weak' Threshold for Hypercube ]\label{thm:second_weak_hypercube}
Let the random matrix $A$ have a random nullspace which
is orthant symmetric and generic.  In the proportional-dimensional
framework  (\ref{eq:propdim}) the
random zonotope $A\h$ obeys the same conclusion (\ref{eq:hyp_weak}) as in
Theorem \ref{thm:main_weak_hypercube}.
\end{theorem}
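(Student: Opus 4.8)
The plan is to reduce Theorem~\ref{thm:second_weak_hypercube} to Theorem~\ref{thm:main_weak_hypercube} by showing that the expected face count ${\cal E}f_k(A\h)$ depends on $A$ only through the joint distributional properties of its nullspace that are shared by all orthant-symmetric, generic ensembles. The starting point is the combinatorial description of the faces of a zonotope: the $k$-faces of $A\h$ are in bijective correspondence with certain $k$-dimensional subsets of the generating segments, and a given candidate face survives the projection precisely when an associated linear-algebraic (sign/feasibility) condition on $A$ — equivalently on $\na$, the nullspace of $A$ — is met. This is exactly the line used for $AT^{N-1}$ and $AC^N$ via the Affentranger--Schneider formulas, and it is hinted at in the abstract's formula ${\cal E}f_k(A\h)/f_k(\h) = 1 - P_{N-n,N-k}$. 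So the first step is: write ${\cal E}f_k(A\h) = f_k(\h)\cdot\mathrm{Prob}\{\text{a fixed reference }k\text{-face survives}\}$, using symmetry of the hypercube to reduce from a sum over all faces to one representative face.

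The second step is to identify the survival event as an event measurable with respect to the nullspace $V = \na$, and to show that its probability is a function only of the \emph{signs} of coordinates of vectors in $V$ relative to the coordinate orthants — i.e.\ of the collection of orthants that $V$ meets. Concretely, whether a reference $k$-face of $\h$ (say, the face where coordinates in a fixed index set $J$, $|J|=k$, are free and the rest are fixed at prescribed bounds) projects to a face of $A\h$ is governed by whether a certain shifted copy of the coordinate subspace indexed by $J$ can be separated from $V$ by the relevant inequalities; this reduces to asking whether $V$, intersected with sign constraints on the complementary coordinates, contains a point with a prescribed sign pattern. Because $B$ is orthant-symmetric, the distribution of the sign pattern of $V$ (which orthants $V$ hits, in each coordinate subspace) is invariant under all coordinate sign flips $S\in\{-1,1\}^N$, hence is completely symmetric; and because $B$ is generic, $V$ is in general position so that the relevant subspaces meet transversally and no degenerate (measure-zero) coincidences occur. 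Together these two properties pin down the survival probability to a single combinatorial quantity, independent of the particular ensemble.

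The third step is to evaluate that combinatorial quantity. Here I would invoke Wendel's Theorem (\cite{We62}) exactly as the abstract signals: the probability that a generic orthant-symmetric subspace of dimension $N-n$ avoids a fixed orthant of dimension $N-k$ is the fair-coin-tossing probability $P_{N-n,N-k}$, so the survival probability is $1 - P_{N-n,N-k}$. This yields ${\cal E}f_k(A\h)/f_k(\h) = 1 - P_{N-n,N-k}$ for every ensemble in the class, matching what the uniformly-distributed orthoprojector gives in Theorem~\ref{thm:main_weak_hypercube}. Since the ratio is now literally the same function of $(k,n,N)$ that was analyzed there, the asymptotic dichotomy~(\ref{eq:hyp_weak}) — value $1$ for $\rho < \rho_W(\delta;\h)$ and $0$ for $\rho > \rho_W(\delta;\h)$ — follows from the asymptotic analysis of $P_{N-n,N-k}$ already carried out for Theorem~\ref{thm:main_weak_hypercube}, with no new estimates needed: the coin-tossing probability transitions sharply at $N-k \approx 2(N-n)$, i.e.\ $k \approx 2n - N$, i.e.\ $\rho \approx 2 - \delta^{-1}$.

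I expect the main obstacle to be the second step: rigorously showing that the zonotope face-survival event is a function of $A$ \emph{only} through the sign/orthant data of $\na$, and that orthant-symmetry plus genericity suffice to make that probability ensemble-independent. Two technical points need care. First, one must set up the bijection between $k$-faces of $\h$ and (index set, boundary-value) pairs and track precisely which linear inequalities encode ``this face maps to a face of $A\h$'' — this is the zonotope analogue of the cone/polytope duality used in the simplex and cross-polytope cases, and getting the signs and the role of $\na$ versus the row space of $A$ straight is where errors creep in. Second, the genericity hypothesis must be shown to kill exactly the boundary cases where the inequalities are non-strict, so that the survival probability has no atoms on degenerate configurations; this is where ``every $N-n$ columns of $B$ linearly independent'' gets used. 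Once these are in place, the reduction to Wendel and hence to the already-proved asymptotics is essentially bookkeeping.
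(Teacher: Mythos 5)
Your proposal follows essentially the same route as the paper: reduce ${\cal E}f_k(A\h)/f_k(\h)$ to the survival probability of a single reference face (exchangeability/averaging), recast survival as the transversality condition ${\cal N}(A)\cap\mbox{Feas}_{x_0}=\{0\}$, use orthant symmetry of the nullspace to equate the mixed-sign hypercube faces with orthant faces (the paper's Lemma \ref{lem:hyper_orth}), apply Wendel's Theorem to get the exact identity $1-P_{N-n,N-k}$, and then reuse the binomial asymptotics already established for the orthant. The only quibble is a phrasing slip in your third step (Wendel gives $P_{N-n,N-k}$ as the probability that the nullspace \emph{meets} the feasible cone nontrivially, i.e.\ the face is lost, not that it ``avoids'' it), which does not affect the structure or the final formula.
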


In a sense, this theorem extends the conclusion of Theorem \ref{thm:main_weak_hypercube}
to vastly more cases .  It has been
previously observed that some results known for the Goodman-Pollack
random orthoprojector model actually extend to other ensembles
of random matrices. 
It was observed for the simplex by 
Affentranger and Schneider \cite{AffenSchnei}, and proven by 
Baryshnikov and Vitale \cite{BaryVitale, Bary}, that face-counting results known for
uniformly-distributed random orthoprojectors follow as well 
 for Gaussian iid matrices $A$. 

Our extension of Theorem 
\ref{thm:main_weak_hypercube} from orthoprojectors to 
orthant-symmetric null spaces in Theorem \ref{thm:second_weak_hypercube}
follows this program.   However, it is a vastly larger extension.

\subsection{Random Cone}\label{subsec:cone}

Convex \underline{cones} provide another type of fundamental 
polyhedral set.  Amongst these, the simplest and most natural is 
the positive orthant $P=\po$.  The image of a cone under projection 
$A$: $\RR^N\goto\RR^n$ is again a cone $K=AP$. 
Typically the cone  has $f_0(K)=1$ vertex (at 0), and $f_1(K)=N$ extreme rays, etc.  
In fact, every such pointed cone in  $\RR^n$ can be generated
as a projection of the positive orthant, with an appropriate orthogonal
projection from an appropriate $\RR^N$.

As with the polytopes models,  surprising threshold
phenomena can arise when the projector is random.

\begin{theorem}[`Weak' Threshold for Orthant]\label{thm:main_weak_orthant}
Let $A$ be a random matrix  
whose nullspace is an orthant-symmetric and generic random subspace. 
In the proportional-dimensional
framework  (\ref{eq:propdim}) we have
\begin{equation} \label{eq:weak_main}
\lim_{n \goto \infty}   \frac{{\cal E}f_k(A\po)}{f_k(\po)} = 
\left\{\begin{array}{cl}
1, & \rho<\rho_W(\delta;\po) \\
0, & \rho>\rho_W(\delta;\po)
\end{array}\right. 
\end{equation}
with $\rho_W(\delta;\po)\equiv\rho_W(\delta;\h)$ as defined in 
(\ref{eq:rhoW}).
\end{theorem}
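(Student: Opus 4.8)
The plan is to reduce the orthant face-count to exactly the same fair-coin probability $1 - P_{N-n,N-k}$ that governs the hypercube, and then invoke the already-established asymptotic analysis of that probability. First I would recall the abstract identity announced in the paper: for a random matrix $A$ whose nullspace $V$ is orthant-symmetric and generic, ${\cal E} f_k(A\po)/f_k(\po) = 1 - P_{N-n,N-k}$. The combinatorial content here is that a $k$-face of $\po$ (a coordinate subspace of dimension $k$ intersected with the orthant, hence ${N\choose k}$ of them) survives projection by $A$ iff a corresponding genericity-plus-sign condition on $V$ holds; orthant-symmetry makes the survival probability depend only on a ``random sign vector versus a fixed subspace'' event, which Wendel's Theorem \cite{We62} evaluates as the coin-tossing probability. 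So the first block of the proof is: cite or reproduce this exact-identity lemma (it is the orthant analogue of the hypercube identity in the abstract), and note that it holds at finite $N$ for every orthant-symmetric generic ensemble, with no Gaussian or orthoprojector assumption needed.

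Second, I would carry out the asymptotic evaluation of $1 - P_{N-n,N-k}$ under the proportional-dimensional regime $k/n \to \rho$, $n/N \to \delta$. Write $P_{N-n,N-k}$ as the probability of at most $N-k-1$ successes in $N-n-1$ fair tosses. Normalizing, the number of tosses is $\sim (1-\delta)N$ and the success threshold is $\sim (1-\rho\delta)N$. By the law of large numbers / Chernoff bounds, this probability tends to $1$ if the threshold exceeds half the number of tosses, i.e. if $1 - \rho\delta > \tfrac12(1-\delta)$, equivalently $\rho < 2 - \delta^{-1}$ (i.e. $\rho < \rho_W(\delta;\po)$ as defined in (\ref{eq:rhoW})); and it tends to $0$ when the inequality reverses. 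Hence $1 - P_{N-n,N-k} \to 1$ for $\rho < \rho_W$ and $\to 0$ for $\rho > \rho_W$. Since $\rho_W(\delta;\po) \equiv \rho_W(\delta;\h)$, this is identical to the computation underlying Theorem \ref{thm:main_weak_hypercube}, and I would simply point to that computation rather than redo it.

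The remaining subtlety, and the step I expect to need the most care, is the interchange of limit and expectation implicit in passing from the pointwise fact ``a given $k$-face survives with probability $1 - P_{N-n,N-k}$'' to the statement about ${\cal E} f_k(A\po)/f_k(\po)$. Here it is actually clean: by linearity of expectation, ${\cal E} f_k(A\po) = f_k(\po)\cdot(1 - P_{N-n,N-k})$ \emph{exactly}, because every $k$-face of $\po$ has the same survival probability (the ensemble's orthant-symmetry gives this symmetry among coordinate faces, and genericity ensures no degenerate collapses). So the ratio is deterministic in $N$, and the only limit to take is the numeric one in the previous paragraph. Thus the genuine obstacle is not analysis but making sure the exact-identity lemma is stated and proved for the orthant at the required level of generality — in particular that genericity of the nullspace (Definition \ref{def:gp}) is exactly what is needed so that the projected orthant is pointed with the expected $f_1 = N$ extreme rays and no face is lost for non-sign reasons, and that orthant-symmetry (Definition \ref{def:os}) is exactly what converts the survival event into Wendel's coin-tossing event. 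Once that lemma is in hand, the theorem follows by combining it with the asymptotic coin-tossing estimate already used for the hypercube.
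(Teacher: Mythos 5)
Your overall route is the paper's own: establish the exact finite-$N$ identity ${\cal E}f_k(A\po)/f_k(\po)=1-P_{N-n,N-k}$ (Theorem \ref{thm:isom}, proved via the survival/transversality equivalence of Lemma \ref{lem:face_transverse_po} and the Wendel reduction of Lemma \ref{lem:trans_wendel}), observe that this ratio is deterministic so no exchange of limit and expectation is needed, and then pass to the limit of the coin-tossing probability. Your first and third paragraphs are in substance what the paper does; the only cosmetic difference is that the paper secures ``all $k$-faces have the same survival probability'' by symmetrizing to exchangeable columns (legitimately without loss of generality, since permuting the columns of $A$ leaves $A\po$ unchanged), rather than deducing it from orthant-symmetry alone.

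The genuine defect is in your asymptotic paragraph, which as written does not work. With Wendel's convention $P_{m,M}=2^{-M+1}\sum_{\ell=0}^{m-1}{M-1\choose\ell}$, the quantity $P_{N-n,N-k}$ is the probability of at most $N-n-1$ successes in $N-k-1$ fair tosses; you have the two roles reversed. Compounding this, your claimed equivalence ``$1-\rho\delta>\tfrac12(1-\delta)$ iff $\rho<2-\delta^{-1}$'' is false: the left inequality is equivalent to $\rho<(1+\delta)/(2\delta)$, which holds for every $(\delta,\rho)\in(0,1)^2$, so the dichotomy you state is vacuous; and your conclusion contradicts the sentence before it (you assert $P_{N-n,N-k}\goto 1$ for $\rho<\rho_W$ and then ``hence'' $1-P_{N-n,N-k}\goto 1$ on the same region). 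The correct bookkeeping is: number of tosses $N-k-1\sim(1-\rho\delta)N$, success cutoff $N-n-1\sim(1-\delta)N$, so by LLN/Chernoff $P_{N-n,N-k}\goto 0$ when $1-\delta<\tfrac12(1-\rho\delta)$, i.e.\ $\rho<2-\delta^{-1}$, and $P_{N-n,N-k}\goto 1$ when the inequality reverses (in particular for all $\rho\in(0,1)$ when $\delta\le 1/2$), which yields (\ref{eq:weak_main}). Finally, you cannot simply ``point to the computation underlying Theorem \ref{thm:main_weak_hypercube}'': in the paper that hypercube result is itself derived from the orthant computation (via Lemma \ref{lem:hyper_orth}), so the binomial estimate must be supplied here — either a standalone Chernoff/LLN argument done with the correct orientation, or the paper's explicit bound of Lemma \ref{lem:regime1} with exponent (\ref{eq:weak_exponent}), combined with the reflection $P_{m,M}=1-P_{M-m,M}$ to handle the case $\rho>\rho_W(\delta;\po)$.
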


Here the threshold for the orthant is at precisely the same place
as it was for the hypercube.
Theorem \ref{thm:main_weak_orthant} is proven in Section \ref{sec:asymp}, 
and there are significant implications in optimization and signal 
processing briefly discussed in Section~\ref{sec:cs}.

\subsection{Exact equality in the number of faces}

Our focus in Sections \ref{subsec:polytope}-\ref{subsec:cone} has been 
on the `weak' agreement of ${\cal E}f_k(AQ)$ with $f_k(Q)$; we have seen in
the proportional-dimensional framework, for $\rho$ below
threshold $\rho_W(\delta;Q)$,  we have limiting relative equality:
\[
  \frac{{\cal E}f_k(A\po)}{f_k(\po)}  \goto 1,  \qquad n \goto \infty.
\]
We now 
focus on the `strong' agreement; it turns out that 
in the proportional dimensional framework, for $\rho$ below
a somewhat lower threshold $\rho_S(\delta;Q)$, we actually have
exact equality with overwhelming probability:
\begin{equation} \label{eq:StrongLim}
\mbox{Prob}\{f_k(Q)=f_k(AQ)\} \goto 1,  \qquad n \goto \infty.
\end{equation}
The existence of such `strong' thresholds  for $Q=T^{N-1}$ and $Q=C^N$ was proven in
\cite{Do05_polytope, DoTa05_polytope}, which exhibited 
thresholds $\rho_S(\delta; Q)$ below which (\ref{eq:StrongLim}) occurs.
These ``strong thresholds'' and the previously
mentioned ``weak thresholds'' (\ref{eq:weak}) are depicted in 
Figure \ref{fig:simplex_crosspolytope}.
A similar strong threshold also holds for 
the projected orthant.

\begin{theorem}[`Strong' Threshold for Orthant]\label{thm:main_strong}
Let 
\begin{equation} \label{eq:Shannon}
H(\gamma):=\gamma\log(1/\gamma)-(1-\gamma)\log(1-\gamma)
\end{equation}
denote the 
usual (base-$e$) Shannon Entropy.  Let \begin{equation}\label{eq:psis}
\psispo(\delta,\rho):=H(\delta)+\delta H(\rho)-(1-\rho\delta)\log 2 .
\end{equation}
For $\delta\ge 1/2$, let 
$\rho_S(\delta;\po)$ denote the zero crossing of $\psispo(\delta,\rho)$.
In the proportional-dimensional framework (\ref{eq:propdim}) 
with $\rho<\rho_S(\delta;\po)$
\begin{equation} \label{eq:strongLim}
\mbox{Prob} \{ f_k(A\po)=f_k(\po)\} \goto 1,\quad\quad\mbox{as}\;\;
n\goto\infty.
\end{equation}
\end{theorem}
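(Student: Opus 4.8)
The plan is to bound the probability of losing a $k$-face by a first-moment (Markov) estimate and then identify, via large deviations, the exponential rate of that estimate; this rate turns out to be exactly $\psispo(\delta,\rho)$, which is why its zero crossing is the strong threshold. The combinatorial starting point is that, since $\mathrm{null}(A)$ is generic, the columns of $A$ are almost surely in general position — a dependent family of at most $n$ columns would give a nonzero null vector supported on at most $n$ coordinates, contradicting linear independence of every $N-n$ columns of a basis of $\mathrm{null}(A)$. Hence every $k$-face of $A\po$ is the image $AF_J$ of a unique $k$-subset $J$ of coordinate rays, distinct $J$ give distinct faces, and therefore $f_k(A\po)\le f_k(\po)=\binom{N}{k}$ almost surely; set $D:=f_k(\po)-f_k(A\po)\ge 0$, the integer count of lost $k$-faces. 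The face $F_J$ is lost precisely when the row space of $A$ contains no vector vanishing on $J$ and strictly positive off $J$; since the row space is itself orthant-symmetric and generic, Wendel's Theorem applied to the $N-k$ coordinate functionals restricted to the $(n-k)$-dimensional slice $\{z\in\mathrm{row}(A):z|_J=0\}$ evaluates $\mbox{Prob}\{F_J\ \text{lost}\}$ exactly, and summing over $J$ yields ${\cal E}D=f_k(\po)-{\cal E}f_k(A\po)=f_k(\po)\,P_{N-n,N-k}$ — equivalently, one may simply invoke the expectation identity ${\cal E}f_k(A\po)/f_k(\po)=1-P_{N-n,N-k}$, already available for orthant-symmetric generic null spaces.

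Markov's inequality then gives
\[
\mbox{Prob}\{f_k(A\po)\ne f_k(\po)\}=\mbox{Prob}\{D\ge 1\}\le{\cal E}D=\binom{N}{k}\,P_{N-n,N-k},
\]
so, writing $P_{N-n,N-k}$ as the fair-coin tail $\mbox{Prob}\{\mathrm{Bin}(N-k-1,\tfrac12)\le N-n-1\}$, the theorem reduces to showing $\binom{N}{k}\,\mbox{Prob}\{\mathrm{Bin}(N-k-1,\tfrac12)\le N-n-1\}\to 0$ whenever $\delta\ge\tfrac12$ and $\rho<\rho_S(\delta;\po)$.

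To see this I would pass to exponential rates along a sequence obeying (\ref{eq:propdim}), where $k/N\to\rho\delta$, $(N-k-1)/N\to 1-\rho\delta$, and $(N-n-1)/N\to 1-\delta$. Stirling gives $N^{-1}\log\binom{N}{k}\to H(\rho\delta)$. The key point is that $\rho<\rho_S(\delta;\po)$ forces $\rho<\rho_W(\delta;\po)=2-\delta^{-1}$, which is exactly the inequality $N-n-1<(N-k-1)/2$; so the binomial probability is a genuine lower tail and Cram\'er's theorem gives $N^{-1}\log\mbox{Prob}\{\mathrm{Bin}(N-k-1,\tfrac12)\le N-n-1\}\to(1-\rho\delta)\bigl[H(\tfrac{1-\delta}{1-\rho\delta})-\log 2\bigr]$. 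Adding the two rates and using the entropy chain rule
\[
H(\rho\delta)+(1-\rho\delta)\,H\!\left(\tfrac{1-\delta}{1-\rho\delta}\right)=H(\delta)+\delta H(\rho)
\]
(together with $H(x)=H(1-x)$) collapses the sum to exactly $\psispo(\delta,\rho)=H(\delta)+\delta H(\rho)-(1-\rho\delta)\log 2$. Since $\psispo(\delta,0)=H(\delta)-\log 2\le 0$ for $\delta\ge\tfrac12$, while $\partial_\rho\psispo(\delta,\rho)=\delta\log\!\bigl(2(1-\rho)/\rho\bigr)>0$ on $(0,\tfrac23)$, the zero crossing $\rho_S(\delta;\po)$ lies in the increasing regime and $\psispo(\delta,\rho)<0$ for every $\rho<\rho_S(\delta;\po)$; hence the Markov bound decays exponentially and (\ref{eq:strongLim}) follows.

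The one auxiliary fact used above is $\rho_S(\delta;\po)\le\rho_W(\delta;\po)$, without which the first-moment bound would be uninformative in part of the claimed range; I would get it by evaluating $\psispo$ at $\rho=\rho_W(\delta)$, where $\rho\delta=2\delta-1$ and $\tfrac{1-\delta}{1-\rho\delta}=\tfrac12$, so the chain-rule form gives $\psispo(\delta,\rho_W(\delta))=H(2\delta-1)\ge 0$; combined with $\psispo(\delta,0)\le 0$ this places a zero of $\psispo(\delta,\cdot)$ in $(0,\rho_W(\delta)]$. I expect the main obstacle to be the asymptotic bookkeeping of the third paragraph: correctly identifying the Cram\'er rate of the binomial lower tail, recognizing that the sum of exponents collapses to exactly $\psispo$ via the entropy identity, and carrying out the monotonicity and sign analysis that locates the zero crossing and certifies that it lies below the weak threshold so the first-moment bound bites where it must.
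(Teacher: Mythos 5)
Your proposal is correct and follows essentially the same route as the paper: a first-moment/union bound over the $\binom{N}{k}$ $k$-faces, each lost with the Wendel probability $P_{N-n,N-k}$ (via the exact identity of Theorem \ref{thm:isom}), followed by showing that $\binom{N}{k}\,P_{N-n,N-k}$ decays exponentially with rate $\psispo(\delta,\rho)<0$ below the zero crossing. The only differences are in the bookkeeping: you identify the exponent by Cram\'er's theorem plus the entropy chain rule where the paper uses the explicit non-asymptotic bound of Lemma \ref{lem:regime1} together with $f_k(\po)=\binom{N}{k}$, and you spell out two points the paper leaves implicit, namely $\rho_S(\delta;\po)\le\rho_W(\delta;\po)$ (so the binomial tail is a genuine lower tail) and the negativity of $\psispo(\delta,\cdot)$ below its zero crossing.
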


The threshold $\rho_W(\delta;Q)$ for $Q=\po$ and $\h$, 
and $\rho_S(\delta;\po)$ are depicted in Figure \ref{fig:phase}.

\begin{figure}[h!]
\begin{tabular}{c}
\psfig{figure=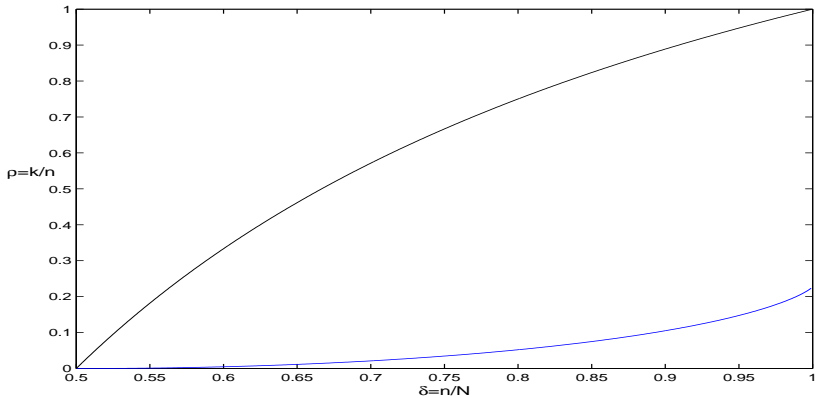,height=2.1in,width=4.3in} 
\end{tabular}
\caption{The `weak' thresholds, $\rho_W(\delta;\h)$ and $\rho_W(\delta;\po)$ 
(black), and a lower bound on the strong threshold for the positive orthant, 
$\rho_S(\delta;\po)$ (blue).}\label{fig:phase}
\end{figure}

In contrast to the projected simplex, cross-polytope, and orthant,
for the hypercube,  there is 
no nontrivial regime where a phenomenon like (\ref{eq:StrongLim}) can occur.  

\begin{lemma}[Zonotope Vertices]\label{thm:strong_zonotope}
Let $A$ be an $n\times N$ matrix, and let $\h$ be the $N$ dimensional 
hypercube.  \[
f_k(A\h)<f_k(\h), \qquad k=0,1,2,\ldots n .
\]
\end{lemma}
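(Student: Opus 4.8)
The plan is to combine the standard combinatorial description of the faces of a zonotope with the classical upper bound on the number of vertices of a zonotope (equivalently, the number of chambers of a central hyperplane arrangement), and to extract a strict inequality that survives for every $k$. Write $a_1,\dots,a_N\in\RR^n$ for the columns of $A$; after the affine change of variables carrying $\h$ to $[0,1]^N$ (which changes no $f_k$) the zonotope is $Z=A\h=\sum_{j=1}^N[0,a_j]$. I use throughout that $n<N$, which is the standing hypothesis of the paper (and genuinely needed: for $n=N$ one can take $A=I$ and get equality). Recall that for $c\in\RR^n$ the face of $Z$ in direction $c$ is $\sum_{\langle c,a_j\rangle>0}a_j+\sum_{\langle c,a_j\rangle=0}[0,a_j]$, so every nonempty face $F$ of $Z$ is a translate of $\sum_{j\in W(F)}[0,a_j]$, where $W(F)$ is the set of indices of the segments appearing as Minkowski summands of $F$, and $\dim F=\operatorname{rank}\{a_j:j\in W(F)\}$. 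A one-line argument shows $W(F)$ is a flat of the linear matroid of $a_1,\dots,a_N$: if $c$ supports $F$ then $c\perp a_j$ for all $j\in W(F)$, hence $c$ is orthogonal to their span, so any $a_\ell$ in that span also satisfies $\langle c,a_\ell\rangle=0$ and thus $\ell\in W(F)$.

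The next step is to count the $k$-faces of $Z$ by grouping them according to their associated flat. Fix a rank-$k$ flat $W$. I claim the $k$-faces $F$ with $W(F)=W$ are in bijection with the vertices of the quotient zonotope $\sum_{j\notin W}[0,\bar a_j]$, where $\bar a_j$ is the image of $a_j$ in $\RR^n/\operatorname{span}(W)\cong\RR^{n-k}$: such an $F$ is determined by the sign pattern $(\operatorname{sign}\langle c,a_j\rangle)_{j\notin W}$ of a supporting $c\in\operatorname{span}(W)^{\perp}$, the realizable patterns are exactly the chambers of the arrangement $\{\bar a_j^{\perp}:j\notin W\}$ in $\RR^{n-k}$, and these chambers biject with the vertices of the stated zonotope. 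Because $W$ is a flat, $\bar a_j\neq 0$ for every $j\notin W$, so this is a zonotope with $N-|W|\le N-k$ nonzero generators in dimension $n-k$, hence has at most $2\sum_{i=0}^{n-k-1}\binom{N-k-1}{i}$ vertices. Summing over all rank-$k$ flats, and noting that sending a rank-$k$ flat to the lexicographically least independent $k$-subset it contains is injective (a flat $W$ is recovered from any of its bases $T$ as $\{j:a_j\in\operatorname{span}(T)\}$), the number of rank-$k$ flats is at most $\binom Nk$, and therefore $f_k(Z)\le\binom Nk\cdot 2\sum_{i=0}^{n-k-1}\binom{N-k-1}{i}$.

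To finish, for $k<n$ the hypothesis $n<N$ gives $n-k-1\le N-k-2$, so $2\sum_{i=0}^{n-k-1}\binom{N-k-1}{i}\le 2\sum_{i=0}^{N-k-2}\binom{N-k-1}{i}=2^{N-k}-2$, whence $f_k(Z)\le\binom Nk(2^{N-k}-2)<\binom Nk 2^{N-k}=f_k(\h)$. The boundary case $k=n$ is dispatched directly: $Z\subseteq\RR^n$ has at most one $n$-dimensional face, whereas $f_n(\h)=\binom Nn 2^{N-n}\ge 2$ because $n<N$.

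The delicate part — and the only step that is not a routine computation — is the structural claim in the second paragraph: that $W(F)$ is well defined independently of the choice of supporting functional, that it is a flat, and that the $k$-faces carrying a prescribed flat $W$ correspond exactly (no collisions, no omissions) to the vertices of the quotient zonotope. This is standard zonotope / oriented-matroid bookkeeping (the faces of a zonotope correspond to the covectors of the oriented matroid of its generators, with the dimension of a face equal to the rank of the zero set of its covector; see, e.g., Ziegler's \emph{Lectures on Polytopes}), but it is the content-bearing input; once it is granted the remaining estimates are elementary. A minor point not to overlook: columns with $a_j=0$ are loops of the matroid, lie in $W(F)$ for every face $F$, and so cause no difficulty.
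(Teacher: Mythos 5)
Your argument is correct, but it follows a genuinely different route from the paper's. The paper argues locally and qualitatively: since $n<N$ there is a nonzero $w\in\na$; choosing the vertex $x_0$ of $\h$ whose coordinates are $0$ or $1$ according to the signs of $w$ makes $w$ a feasible direction at $x_0$, so the transversality criterion of Lemma \ref{lem:face_transverse_h} fails, that vertex does not survive the projection, and (combined with the standard fact that every $k$-face of $A\h$ is the affine-isomorphic image of a $k$-face of $\h$, so any $k$-face of $\h$ through a lost vertex cannot survive) the deficit propagates to every $k\le n$. You instead give a global quantitative count: stratify the faces of the zonotope by the flats of the column matroid, bound the number of rank-$k$ flats by $\binom{N}{k}$, and bound the faces over each flat by the classical Schl\"afli--Buck bound $2\sum_{i=0}^{n-k-1}\binom{N-k-1}{i}$ on chambers of a central arrangement of at most $N-k$ hyperplanes in $\RR^{n-k}$, treating $k=n$ separately. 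This costs more machinery (the covector description of zonotope faces, which you rightly flag as the load-bearing imported input, versus the paper's self-contained one-paragraph argument), but it buys something the paper's proof does not give: the explicit bound $f_k(A\h)\le \binom{N}{k}\,2\sum_{i=0}^{n-k-1}\binom{N-k-1}{i}=f_k(\h)\,\bigl(1-P_{N-n,N-k}\bigr)$ valid for \emph{every} matrix $A$, which is exactly the expectation in Theorem \ref{thm:finite_samp_hyp} and is attained when the columns are in general position; the strict inequality for $k<n$ then falls out of $n<N$, with a comfortable additive gap of $2\binom{N}{k}$. One simplification worth noting: for the upper bound you never need the full bijection between faces with zero-set $W$ and chambers of the quotient arrangement; it suffices that the map sending a pair (rank-$k$ flat, chamber) to the exposed face $F(c)$ is surjective onto the $k$-faces, which is the easy direction, so the step you single out as delicate can be weakened without affecting the conclusion.
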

\begin{proof}[Proof of Theorem \ref{thm:strong_zonotope}]
In fact, we will show that $A\h$ always has 
fewer than $2^N$ vertices. This immediately implies the full result.
There exists a $w\in\na$ with $w\ne 0$.  
$\h$ has a vertex $x_0$ obeying
\[
x_0(i):=\left\{\begin{array}{ll} 0 & \sgn(w(i))>0, \\ 1 & \mbox{else}.
\end{array}\right.
\]
Let $x_{t}:=x+t w$ with $t>0$.  For $t$ 
sufficiently small $x_t$ is in the interior of $\h$, and 
by construction $Ax_0=Ax_{t}$.  Invoking  
Lemma \ref{lem:face_transverse_h}, $x_0$ is not a vertex 
of $A\h$, and $f_0(A\h)<f_0(\h)$.  
\end{proof}

Although this proof only highlights 
a single vertex of $\h$ that is interior to $A\h$, it is clear from its 
construction that there are typically many such lost vertices.
Theorem \ref{thm:main_strong} is proven in Section \ref{sec:proof_strong}.

\subsection{Exact Non-Asymptotic Results}

We have so far exclusively used the Vershik-Sporyshev
proportional-dimensional asymptotic framework; this makes for
the most natural comparisons between results for the three
families of regular polytopes.  However, for the positive
orthant and hypercube, something truly remarkable happens: there is
a simple exact expression for finite $N$ which connects to a beautiful 
result in geometric probability.

\begin{theorem}[Wendel, \cite{We62}]\label{thm:wendel}
Let $M$ points in $\RR^m$ be drawn i.i.d. from a centro-symmetric 
distribution such that the points are in general position, 
then the probability that all the points fall in some half space is
\begin{equation}\label{eq:wendel}
P_{m,M}=2^{-M+1}\sum_{\ell=0}^{m-1}
{M-1 \choose \ell}.
\end{equation}
\end{theorem}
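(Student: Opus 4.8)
The plan is to reduce the statement to a counting problem about sign patterns, following the classical orthant-decomposition argument. Write the $M$ points as $x_1,\dots,x_M\in\RR^m$; by general position, no proper subset of $m$ or fewer of them is linearly dependent, and in particular none is zero. The event that all points lie in a common closed half-space through the origin is $\{\exists\, u\in\RR^m\setminus\{0\}:\ \langle u,x_j\rangle\ge 0\ \forall j\}$. The first step is to condition on the unordered set of \emph{lines} $\{\pm x_1\},\dots,\{\pm x_M\}$ spanned by the points, and to exploit centro-symmetry: given these lines, the actual points are obtained by choosing each sign $\epsilon_j\in\{-1,+1\}$ independently and uniformly, so $x_j=\epsilon_j y_j$ for fixed generators $y_j$. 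Thus the conditional probability of the half-space event is
\[
2^{-M}\cdot\#\Big\{\epsilon\in\{-1,1\}^M:\ \exists\, u\neq 0,\ \epsilon_j\langle u,y_j\rangle\ge 0\ \forall j\Big\}.
\]
Since this conditional probability will turn out not to depend on the $y_j$ (only on $m$ and $M$), averaging over the lines leaves it unchanged, giving the unconditional answer.

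The combinatorial heart is therefore: count the sign vectors $\epsilon$ for which the system $\epsilon_j\langle u,y_j\rangle\ge 0$ has a nonzero solution $u$. Equivalently, let $H_j=\{u:\langle u,y_j\rangle=0\}$ be the $M$ hyperplanes through the origin in $\RR^m$ determined by the $y_j$; general position of the $y_j$ means these hyperplanes are ``in general position'' in the arrangement sense (any $m$ of their normals are independent, any $m$ of them meet only at $0$). The open regions of this central arrangement are exactly indexed by the attainable strict sign vectors $\mathrm{sgn}(\langle u,y_j\rangle)_j$, and a sign vector $\epsilon$ admits a solution of the weak inequalities $\epsilon_j\langle u,y_j\rangle\ge 0$ (with $u\neq 0$) precisely when $\epsilon$ or $-\epsilon$ agrees with the sign vector of some region — counting each region and its antipode, the number of admissible $\epsilon$ equals the number $C(M,m)$ of regions of a central arrangement of $M$ hyperplanes in general position in $\RR^m$. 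The standard fact (provable by a one-line induction on $M$, removing one hyperplane and observing it is cut by the remaining $M-1$ into exactly $C(M-1,m-1)$ new regions) is
\[
C(M,m)=2\sum_{\ell=0}^{m-1}\binom{M-1}{\ell}.
\]
Dividing by $2^M$ yields $P_{m,M}=2^{-M+1}\sum_{\ell=0}^{m-1}\binom{M-1}{\ell}$, which is \eqref{eq:wendel}.

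I expect the main obstacle to be the careful handling of the boundary cases in the equivalence ``admissible $\epsilon$ $\leftrightarrow$ region of the arrangement.'' One must argue that a sign vector $\epsilon$ for which the \emph{weak} system $\epsilon_j\langle u,y_j\rangle\ge0$ is solvable with $u\neq 0$ is counted exactly once among the $2\sum\binom{M-1}{\ell}$ full-dimensional regions: the point is that if some inequalities are forced to be equalities on the solution set, general position still lets one perturb $u$ within the (nonempty) relatively open face to land in an adjacent full region whose strict sign vector dominates $\epsilon$, and conversely every full region's sign vector is itself such an $\epsilon$; the antipodal pairing $\epsilon\mapsto-\epsilon$ then matches the factor of $2$. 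Verifying that no double-counting or omission occurs — i.e. that the map from admissible sign vectors to unordered antipodal region-pairs is a bijection onto all such pairs — is the step requiring genuine care; everything else (centro-symmetry averaging, the induction for $C(M,m)$) is routine.
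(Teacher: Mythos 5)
Your proposal is correct, but note that the paper does not prove this statement at all: it imports Wendel's theorem as a classical result with the citation \cite{We62} and uses only the probability formula, so there is no in-paper proof to compare against. What you have written is essentially Wendel's original argument: randomize the signs using centro-symmetry and independence, reduce to counting admissible sign vectors, identify these with the cells of the central arrangement of the hyperplanes $y_j^{\perp}$, and invoke the Schl\"afli count $C(M,m)=2\sum_{\ell=0}^{m-1}\binom{M-1}{\ell}$, proved by the recursion $C(M,m)=C(M-1,m)+C(M-1,m-1)$. The boundary issue you flag does close, and the key point is exactly the one you hint at: if $u\neq 0$ solves the weak system $\epsilon_j\langle u,y_j\rangle\ge 0$, then general position forces the tight index set $J=\{j:\langle u,y_j\rangle=0\}$ to have size at most $m-1$ (no $m$ of the $y_j$ lie in the hyperplane $u^{\perp}$), so the map $v\mapsto(\langle v,y_j\rangle)_{j\in J}$ is onto $\RR^{|J|}$ and a small perturbation $u+tv$ makes every inequality strict with the prescribed signs; hence each admissible $\epsilon$ is the exact strict sign vector of a (unique, since the corresponding open cone is convex and sign vectors are locally constant off the arrangement) full-dimensional region, giving a genuine bijection between admissible sign vectors and regions with no appeal to ``domination'' by a neighboring region. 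Two cosmetic remarks: the phrase ``$\epsilon$ or $-\epsilon$ agrees with the sign vector of some region'' is redundant, since the arrangement is centrally symmetric and $-\epsilon$ labels a region exactly when $\epsilon$ does, so you can simply say the admissible $\epsilon$ are the region labels; and the conditioning on the unordered lines $\{\pm x_j\}$ is cleaner if phrased as the distributional identity $(x_1,\dots,x_M)\overset{d}{=}(\epsilon_1x_1,\dots,\epsilon_Mx_M)$ with $\epsilon$ uniform on $\{-1,1\}^M$ and independent of the $x_j$, which avoids regular-conditional-probability bookkeeping; one should also note that sign flips preserve general position, so the $2^M$ flipped configurations are almost surely all generic.
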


This elegant result is often presented as simply a piece of recreational
mathematics.  In our setting, it turns out to be truly powerful, because
of the following identity.

\begin{theorem} \label{thm:isom}
Let $A$ be an $n \times N$ random matrix 
with an orthant-symmetric and generic random nullspace.
\begin{equation} \label{eq:exact}
 \frac{{\cal E}f_k(A\po)}{f_k(\po)} =  1 - P_{N-k,N-n} .
\end{equation}
\end{theorem}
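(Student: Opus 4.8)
\emph{Plan of proof.} The plan is to compute, for one fixed $k$-dimensional face of $\po$, the probability that $A$ carries it onto a $k$-face of $A\po$, and then sum over all $f_k(\po)=\binom{N}{k}$ such faces. Since $f_k(A\po)=f_k(MA\,\po)$ for every invertible $M$, we may work entirely with the nullspace $V=\na$: fix an $(N-n)\times N$ matrix $B$ whose rows span $V$, so by hypothesis $B$ is orthant-symmetric and generic, $\dim V=N-n$, and $V^{\perp}=\mathrm{row}(A)$. Write $F_J=\{x\in\po:\ x_i=0,\ i\notin J\}$ for the $k$-face indexed by a $k$-subset $J$, and $a_i=Ae_i$ for the columns of $A$. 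Genericity ensures almost surely that every $\le n$ columns of $A$ are independent and no two are positively proportional; consequently every $k$-face of $A\po$ equals $\mathrm{cone}\{a_j:j\in J\}$ for a unique $k$-subset $J$, and $\mathrm{cone}\{a_j:j\in J\}$ is a $k$-face of $A\po$ precisely when there is a functional $\phi\in\RR^n$ with $\phi^{\mathsf T}a_j=0$ for $j\in J$ and $\phi^{\mathsf T}a_i<0$ for $i\notin J$. Linearity of expectation then gives ${\cal E}f_k(A\po)=\sum_{|J|=k}\mathrm{Prob}\{F_J\ \text{is preserved}\}$, so the theorem reduces to showing $\mathrm{Prob}\{F_J\ \text{is preserved}\}=1-P_{N-k,N-n}$ for each individual $J$. (One cannot shortcut this by symmetry: orthant symmetry of $B$ does \emph{not} make its columns exchangeable, so the events for different $J$ need not be equidistributed; it is the universality inside Wendel's theorem that makes a common value emerge.)

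Step two translates ``$F_J$ preserved'' into a statement about $B_{J^{c}}$, the $(N-n)\times(N-k)$ submatrix of columns of $B$ indexed by $J^{c}$. Setting $v=A^{\mathsf T}\phi$ identifies the functionals $\phi$ above with the vectors $v\in V^{\perp}$ that vanish on $J$ and are strictly negative on $J^{c}$. Now $v\in V^{\perp}$ is the same as $Bv=0$, and if $v$ vanishes on $J$ then $Bv=B_{J^{c}}v_{J^{c}}$, where $v_{J^{c}}$ is the restriction of $v$ to the coordinates in $J^{c}$; so such $v$ correspond exactly to vectors of $\ker B_{J^{c}}\subseteq\RR^{N-k}$, an $(n-k)$-dimensional subspace by genericity. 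Hence $F_J$ is preserved iff $\ker B_{J^{c}}$ contains a vector all of whose coordinates are strictly negative, equivalently (replace $v$ by $-v$) iff $\ker B_{J^{c}}$ meets the open positive orthant of $\RR^{N-k}$.

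Step three is a theorem of the alternative together with Wendel. By Stiemke's lemma applied to $C=B_{J^{c}}$, exactly one holds: (i) $Cx=0$ for some $x\in\RR^{N-k}$ with $x>0$; or (ii) $C^{\mathsf T}y\ge0$, $C^{\mathsf T}y\ne0$ for some $y\in\RR^{N-n}$. Alternative (i) is exactly ``$F_J$ preserved'', while (ii) says the $N-k$ columns of $B_{J^{c}}$, viewed as points of $\RR^{N-n}$, lie in a common closed halfspace through the origin (the ``$\ne 0$'' being automatic, since generically those $N-k\ge N-n$ vectors span $\RR^{N-n}$). Therefore $\mathrm{Prob}\{F_J\ \text{preserved}\}=1-\mathrm{Prob}\{\text{the }N-k\ \text{columns of }B_{J^{c}}\ \text{lie in a halfspace}\}$. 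Those $N-k$ points are a.s. in general position, and their joint distribution is invariant under negating any subset of them -- precisely orthant symmetry of $B$ restricted to the columns in $J^{c}$ -- which is all Wendel's count needs: the number of sign patterns for which the reflected points lie in a halfspace depends, in general position, only on $N-k$ and $N-n$ (it is the Schl\"afli count $2\sum_{\ell<N-n}\binom{N-k-1}{\ell}$), and sign-symmetry makes the conditional law of those signs uniform. So by Theorem~\ref{thm:wendel}, applied with $N-k$ points in dimension $N-n$, the probability equals the fair-coin quantity $P_{N-k,N-n}$, the same for every $J$. Summing, ${\cal E}f_k(A\po)=\binom{N}{k}\bigl(1-P_{N-k,N-n}\bigr)=f_k(\po)\bigl(1-P_{N-k,N-n}\bigr)$, the claimed identity.

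The main obstacle is not this short chain but the genericity bookkeeping that makes every ``iff'' an honest almost-sure equivalence: generic $B$ must force every $\le n$ columns of $A$ and every $\le N-n$ columns of $B$ to be linearly independent (this underlies the bijection between $k$-faces of $A\po$ and preserved $k$-subsets, the rank of $B_{J^{c}}$, and the strict inequalities that make $\mathrm{cone}\{a_j:j\in J\}$ an \emph{exposed} face), and no two columns of $A$ may be positively proportional, so that distinct preserved $k$-subsets give distinct $k$-faces. The genuinely substantive point is the last one: Wendel's combinatorial count must be justified for the non-i.i.d.\ but sign-symmetric column configuration of $B_{J^{c}}$, so that the same value $P_{N-k,N-n}$ is obtained for every $J$ even though the configurations $\{b^{(i)}:i\in J^{c}\}$ are not identically distributed as $J$ varies. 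The case $k=0$, where $B_{J^{c}}=B$ and ``$F_{\emptyset}$ preserved'' just asserts that $A\po$ is pointed, is a consistency check that the general argument already subsumes.
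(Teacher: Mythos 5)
Your proof is correct, and it reaches the same final destination as the paper -- for a fixed $k$-face with zero set $J^c$, survival fails exactly when the columns of $B_{J^c}$ lie in a common half-space of $\RR^{N-n}$, whence Wendel -- but it gets there and handles the averaging differently. The paper characterizes survival by the primal transversality condition ${\cal N}(A)\cap\mbox{Feas}_{x_0}(\po)=\{0\}$ (Lemma \ref{lem:face_transverse_po}), which, written in the coordinates $c$ of a nullspace basis, \emph{is already} the half-space event, so no duality step is needed; you instead go through the exposed-face/supporting-functional description of $k$-faces of $A\po$ and Stiemke's transposition theorem, which is a slightly longer but equally valid route (and your genericity bookkeeping -- full spark of $A$, strictness of the exposing inequalities, uniqueness of $J$ -- is exactly what the paper leaves implicit behind identity (\ref{eq:aveFace})). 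On the averaging: the paper assumes exchangeable columns ``without loss of generality'' and reduces to one face; your parenthetical that one ``cannot shortcut this by symmetry'' is overstated, since one may always right-multiply by an independent uniform random permutation without changing ${\cal E}f_k(A\po)$ or the orthant-symmetric/generic hypotheses. However, your substitute -- computing the probability separately for each $J$ and observing that Wendel's count requires only (a) almost-sure general position and (b) invariance under independently negating columns, so the same value emerges for every $J$ -- is not only correct but actually tightens the paper's own invocation of Theorem \ref{thm:wendel}: Lemma \ref{lem:trans_wendel} asserts the $\beta_i$ are ``i.i.d.\ centrosymmetric,'' which orthant symmetry does not literally give (e.g.\ for sign ensembles $B_0S$), whereas the sign-flip-conditional Schl\"afli-count argument you sketch is the right level of generality. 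One small notational point: with $N-k$ points in dimension $N-n$, the paper's convention $P_{m,M}$ ($M$ points in $\RR^m$) yields $P_{N-n,N-k}$, as in Lemma \ref{lem:trans_wendel} and Theorem \ref{thm:finite_samp_hyp}; the subscript order $P_{N-k,N-n}$ in the statement of Theorem \ref{thm:isom} is the paper's own inconsistency, and your Schl\"afli sum $2\sum_{\ell<N-n}\binom{N-k-1}{\ell}$ identifies the correct quantity.
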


Symmetry implies a similar identity for the hypercube:
 
\begin{theorem} \label{thm:finite_samp_hyp}
Let $A$ be a random matrix with
an orthant-symmetric and generic random nullspace.
\begin{equation}
 \frac{{\cal E}f_k(A\h)}{f_k(\h)} =  1 - P_{N-n,N-k}.
 \end{equation}
\end{theorem}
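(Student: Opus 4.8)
The plan is to reduce the hypercube statement to the orthant statement (Theorem \ref{thm:isom}) by exploiting the combinatorial relationship between the faces of $A\h$ and the faces of a family of projected orthants. The key geometric observation is that a zonotope $A\h$, being the Minkowski sum of the $N$ line segments $[0, a_i]$ (with $a_i$ the columns of $A$), has $k$-faces in one-to-one correspondence with certain face incidences: each $k$-face of $A\h$ is obtained by choosing a face of the standard hypercube $\h$ whose image under $A$ survives as a genuine face. A face of $\h$ of dimension $k$ is specified by fixing $N-k$ coordinates at $0$ or $1$ and letting the remaining $k$ coordinates range freely; its image is a translate of $A_J\h^J$ where $J$ is the set of free coordinates. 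Whether this image is a genuine $k$-face of $A\h$ is governed by whether a supporting hyperplane exists, which in turn is a half-space condition on an associated set of vectors --- exactly the type of event Wendel's Theorem (\ref{thm:wendel}) controls.

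First I would set up the face-survival criterion precisely: invoking the linear-algebraic characterization (the analogue of Lemma~\ref{lem:face_transverse_h} alluded to in the excerpt), a given $k$-face $F$ of $\h$ maps to a $k$-face of $A\h$ iff a certain $(N-n)$-dimensional vector configuration --- the projections onto $\na$ of the standard basis vectors indexed by the fixed coordinates, with signs determined by which bound ($0$ or $1$) each fixed coordinate takes --- fails to be contained in any half-space, equivalently iff $0$ lies in the interior of their convex hull. Because the nullspace is orthant-symmetric, the sign pattern of the fixed coordinates is irrelevant in distribution: averaging over the $2^{N-k}$ sign choices, each fixed-coordinate face of $\h$ of a given support contributes the same survival probability. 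This is the step where orthant-symmetry does the real work, and it is also the main obstacle: one must check carefully that conditioning on general position does not disturb the symmetry argument, and that the $2^{N-k}$ faces sharing a common free-coordinate set $J$ can be handled uniformly rather than individually.

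Next I would count: there are $\binom{N}{k} 2^{N-k}$ faces of dimension $k$ in $\h$, so $f_k(\h) = \binom{N}{k}2^{N-k}$. By linearity of expectation, ${\cal E}f_k(A\h)$ equals $f_k(\h)$ times the common survival probability $1 - p$, where $p$ is the probability that the relevant $N-k$ vectors in $\RR^{N-n}$ all lie in a half-space through the origin. Since the nullspace is generic, these $N-k$ vectors are in general position in $\RR^{N-n}$ and (after the sign-averaging) are exchangeable with the centro-symmetric sign structure required by Wendel; hence $p = P_{N-n, N-k}$ in the notation of Theorem~\ref{thm:wendel} with $m = N-n$ and $M = N-k$. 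Dividing through gives ${\cal E}f_k(A\h)/f_k(\h) = 1 - P_{N-n,N-k}$, which is the claim.

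An alternative, possibly cleaner, route --- and the one I would present if the direct half-space computation gets unwieldy --- is to derive this identity formally from Theorem~\ref{thm:isom} together with a face-by-face dictionary between $A\h$ and $A'\po$ for a suitably lifted matrix $A'$: a zonotope in $\RR^n$ is a projection of an orthant in $\RR^{N+\text{(something)}}$, or more directly, each $k$-face of $A\h$ corresponds to a $(k{+}1)$-face of a homogenized cone built from $A$ and the all-ones direction. Chasing dimensions through (\ref{eq:exact}) then yields the shifted parameters $P_{N-n,N-k}$ for the hypercube versus $P_{N-k,N-n}$ for the orthant; note that the swapped subscripts are consistent, since $P_{m,M} + P_{M-m+? , \ldots}$-type reflection identities for fair-coin tails convert one into the other. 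The hard part in this second approach is making the face dictionary bijective and checking it respects the symmetry/genericity hypotheses; once that is in hand the conclusion is immediate from the previously established orthant result.
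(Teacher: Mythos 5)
Your main route is correct and is essentially the paper's own argument: you characterize survival of a $k$-face of $\h$ by the transversality/half-space condition on the $N-k$ sign-adjusted nullspace vectors (Lemma \ref{lem:face_transverse_h}), use orthant symmetry to show the survival probability does not depend on the $0/1$ pattern of the fixed coordinates (this is exactly Lemma \ref{lem:hyper_orth}), and then invoke Wendel's Theorem with $m=N-n$ points' dimension and $M=N-k$ points, just as in Lemma \ref{lem:trans_wendel} and Theorem \ref{thm:isom}. The only cosmetic differences are that you apply Wendel face-by-face instead of the paper's exchangeable-columns averaging device, while your sketched ``homogenization'' alternative is not needed and not worked out.
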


These formulae are not at all asymptotic or approximate.
But all the earlier asymptotic results derive from them.
Theorem \ref{thm:isom} is proven in Section \ref{sec:isom}, and 
the symmetry argument for Theorem \ref{thm:finite_samp_hyp}
is formalized in Lemma \ref{lem:hyper_orth} and proven in 
Section \ref{sec:orth_proof}.

\subsection{Contents}
Theorem \ref{thm:main_weak_orthant} is proven in Section 
\ref{sec:asymp},
Theorems \ref{thm:main_weak_hypercube} and \ref{thm:second_weak_hypercube} 
are proven in Section \ref{sec:weak_hypercube_proof}, and 
Theorem \ref{thm:main_strong} is proven in Section \ref{sec:proof_strong};
each using the classical Wendel's Theorem \cite{We62}, 
Theorem \ref{thm:wendel}.  Their relationships with existing results 
in convex geometry and matroid theory are discussed in Section 
\ref{sec:existing}, the the implications of these results for information 
theory, signal processing, and optimization are briefly discussed in 
Section \ref{sec:cs}.

\section{Proof of main results}

Our plan is to start with the key non-asymptotic exact identity (\ref{eq:exact})
and then derive 
from it Theorem \ref{thm:main_weak_orthant}
by asymptotic analysis of the probabilities in Wendel's Theorem.
We then infer Theorem \ref{thm:second_weak_hypercube} and
later Theorem \ref{thm:main_strong} follows in Section 
\ref{sec:proof_strong}.

\subsection{Proof of Theorem \ref{thm:isom}}\label{sec:isom}

Here and below we follow the convention that, if we don't give
the proof of a lemma or corollary immediately following its statement, 
then the proof can be found in Section \ref{sec:MoreProofs}.

Our proof of the key formula (\ref{eq:exact}) starts with the 
following observation on the expected number of $k$-faces 
of $\po$.  
\begin{equation} \label{eq:aveFace}
 \frac{{\cal E}f_k(A\po)}{f_k(\po)} =  
\mbox{Ave}_F\left[ \mbox{Prob}\{ AF \mbox{ is a $k$-face of } A\po \} \right].
 \end{equation}
Here $Ave_F$ denotes "the arithmetic mean over all $k$-faces of $\po$.

Because of (\ref{eq:aveFace}) we will be implicitly averaging
across faces below.
As a calculation device we 
suppose that all faces are statistically equivalent;
this allows us to study one $k$-face, 
and yet compute the average across all $k$-faces.

\begin{definition}[Exchangable columns]\label{def:exchangable}
Let $A$ be a random $n$ by $N$ matrix such that
for each permutation matrix $\Pi$, 
and for every measurable set $\Omega$, 
\[
\mbox{Prob}\{A\in\Omega\}=\mbox{Prob}\{A\Pi\in\Omega \}
\]
Then we say that $A$ has exchangeable columns.
\end{definition}

Below we assume without loss of generality that $A$ has exchangeable columns.
Then   (\ref{eq:aveFace}) becomes: let $F$ be a fixed $k$-face of $\po$; then
\begin{equation} \label{eq:aveFace2}
 \frac{{\cal E}f_k(A\po)}{f_k(\po)} =  
\mbox{Prob}\{ AF \mbox{ is a $k$-face of } A\po \} .
 \end{equation}

Let $P$ be a polytope in $\RR^N$ and $x_0\in P$.  The vector 
$v$ is a feasible direction for $P$ at $x_0$ if $x_0+tv\in P$ for 
all sufficiently small $t>0$.  Let $\mbox{Feas}_{x_0}(P)$ denote the 
set of all feasible directions for $P$ at $x_0$.

\begin{lemma}\label{lem:face_transverse_po}
Let $x_0$ be a vector in $\RR_+^N$ with exactly $k$ nonzeros.  
Let $F$ denote the associated $k$-face of $\po$.  For an 
$n\times N$ matrix $A$, let $AF$ denote the image of $F$ under $A$.
The following are equivalent:

\begin{tabular}{ll}
(Survive($A,F,\po$)): & $AF$ is a $k$-face of $A\po$, \\
(Transverse($A,x_0,\po$)) & ${\cal N}(A)\cap 
\mbox{Feas}_{x_0}(\po)=\{0\}$.
\end{tabular}
\end{lemma}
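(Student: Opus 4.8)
The plan is to identify the feasible cone explicitly, recast both statements as linear‑algebraic conditions involving $\mathcal{N}(A)$, and then prove the two implications separately: one by exhibiting a supporting hyperplane (via a theorem of the alternative), the other by the contrapositive (a putative face is destroyed). Throughout, write $J=\{i:x_0(i)\neq 0\}$ (so $|J|=k$), $F=\{x\in\po:x(i)=0\ \text{for}\ i\notin J\}$, and $L=\mathrm{span}(F)=\{v:v(i)=0\ \text{for}\ i\notin J\}$. Since $x_0(i)>0$ for $i\in J$ and $x_0(i)=0$ for $i\notin J$, one checks directly that
\[
\mathrm{Feas}_{x_0}(\po)=\{v\in\RR^N:\ v(i)\ge 0\ \text{for all}\ i\notin J\}=:T ,
\]
and in particular $L\subseteq T$. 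Consequently the condition (Transverse) already implies $\mathcal{N}(A)\cap L=\{0\}$, so $A$ restricted to $L$ is injective and $\dim(AF)=k$; the remaining issue in each direction is whether $AF$ \emph{is a face} of $A\po$.

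For (Transverse)$\Rightarrow$(Survive) I would build a linear functional exposing $AF$. Let $W=AL$ (a $k$-dimensional subspace of $\RR^n$) and, for $i\notin J$, let $\widehat{Ae_i}$ be the orthogonal projection of $Ae_i$ onto $W^{\perp}$. If there were a nontrivial nonnegative linear relation $\sum_{i\notin J}\lambda_i\,\widehat{Ae_i}=0$, then $A\bigl(\sum_{i\notin J}\lambda_i e_i\bigr)\in W=AL$, say it equals $A\mu$ with $\mu\in L$; then $v:=\sum_{i\notin J}\lambda_i e_i-\mu$ is a nonzero vector of $\mathcal{N}(A)$ whose coordinates on $J^{c}$ are the $\lambda_i\ge 0$, i.e.\ $0\ne v\in\mathcal{N}(A)\cap T$, contradicting (Transverse). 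Hence, by Gordan's theorem of the alternative, there is $\psi\in W^{\perp}$ with $\langle\psi,Ae_i\rangle<0$ for every $i\notin J$; and $\psi\perp W$ gives $\langle\psi,Ae_i\rangle=0$ for $i\in J$. Then for $y\in\po$ we have $\langle\psi,Ay\rangle=\sum_{i\notin J}\langle\psi,Ae_i\rangle\,y(i)\le 0$, with equality precisely when $y(i)=0$ for all $i\notin J$, i.e.\ precisely when $y\in F$. Thus $\{z:\langle\psi,z\rangle=0\}$ supports $A\po$ and meets it exactly in $AF$, so $AF$ is a face; combined with $\dim(AF)=k$ this is (Survive). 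This direction needs no assumption on $A$ beyond $k<N$.

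For (Survive)$\Rightarrow$(Transverse) I would argue the contrapositive. Suppose $0\ne w\in\mathcal{N}(A)\cap T$. If $w\in L$, then $A|_L$ is not injective, so $\dim(AF)<k$ and $AF$ is not a $k$-face. Otherwise $w(j)>0$ for some $j\notin J$; then $x_t:=x_0+tw\in\po$ for all small $t>0$ and $Ax_t=Ax_0\in AF$. If $AF$ were a $k$-face (hence a face, with $\dim=k$), then, since $x_t\pm se_j\in\po$ for small $s>0$ and $Ax_t$ is the image of their midpoint, the face property of $AF$ forces $A(x_t+se_j)\in AF$; iterating this (or scaling and using closedness of the polyhedral cone $AF$) yields $Ae_j\in AF=\mathrm{cone}(Ae_i:i\in J)$. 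Writing $Ae_j=\sum_{i\in J}c_i Ae_i$ with $c_i\ge 0$ produces a nonzero element $e_j-\sum_{i\in J}c_i e_i$ of $\mathcal{N}(A)$ supported on a set of size at most $k+1$. Since $k<n$, general position of $\mathcal{N}(A)$ (no nonzero element has fewer than $n+1$ nonzero coordinates, because any $N-n$ columns of a basis matrix of $\mathcal{N}(A)$ are independent) forces this vector to vanish, a contradiction. Hence $\mathcal{N}(A)\cap T=\{0\}$.

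The main obstacle is exactly this last step: ruling out that $A$ can fold a strictly higher‑dimensional face of $\po$ onto $AF$ while keeping $AF$ a genuine face — a phenomenon that does occur for special $A$ — which is why the general‑position hypothesis and $k<n$ enter here, even though the forward implication needs no genericity. The Gordan‑type separation is the technical heart of the forward implication; the rest is bookkeeping about coordinate supports.
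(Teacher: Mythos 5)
Your proposal is correct, but it follows a genuinely different route from the paper's own argument. The paper proves (Survive)$\Rightarrow$(Transverse) by asserting that, for $A$ in general position, survival of the face makes $x_0$ the unique nonnegative solution of $Ax=Ax_0$, which a nonzero feasible null direction would contradict; and it proves (Transverse)$\Rightarrow$(Survive) by contradiction, claiming that if $AF$ is not a $k$-face then $Ax_0$ is interior to $A\po$, hence of the form $Az_0$ with $z_0>0$, so $z_0-x_0$ is a nonzero feasible null vector. You instead handle (Transverse)$\Rightarrow$(Survive) constructively, building an exposing functional $\psi$ through Gordan's alternative; this is tighter than the paper's ``not a $k$-face, hence interior'' step (which as written ignores that $AF$ could be lower-dimensional or sit inside a larger proper face without being a face), and, as you observe, it needs no genericity. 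For the converse you argue contrapositively, using the face property together with iteration/scaling and closedness of the finitely generated cone to force $Ae_j\in AF$, and then invoking general position with $k<n$ to kill the resulting null vector with at most $k+1$ nonzeros. One point worth making explicit: the lemma as stated carries no genericity hypothesis, yet (Survive)$\Rightarrow$(Transverse) genuinely fails without it (two equal columns, one inside and one outside the support of $x_0$, give a surviving $k$-face together with a nonzero feasible null direction); the paper's proof also silently invokes ``general position of $A$'' at exactly this point, so your explicit use of it matches the paper's intent, and your version has the added value of isolating which direction actually requires it.
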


We now develop the connections to the probabilities in
Wendel's theorem.

\begin{lemma}\label{lem:trans_wendel}
Let $x_0\in\po$ have $k$ nonzeros.  Let $A$ be $n\times N$ with $n<N$ 
have an orthant-symmetric null space with exchangeable columns.  Then
\[
\mbox{Prob}\{(\mbox{Transverse($A,x_0,\po$)})\;\;\mbox{Holds}\}= 1 - P_{N-n,N-k}
\]
\end{lemma}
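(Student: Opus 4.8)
The plan is to reduce (Transverse($A,x_0,\po$)) to a statement about $N-k$ random points lying in a halfspace, so that Wendel's Theorem (Theorem \ref{thm:wendel}) applies directly. First I would describe the feasible cone $\mbox{Feas}_{x_0}(\po)$ explicitly: if $x_0$ has its $k$ nonzeros on an index set $I$ and zeros on $I^c$ (with $|I^c| = N-k$), then a direction $v$ is feasible iff $v(i) \ge 0$ for all $i \in I^c$, with no constraint on coordinates in $I$. So $\mbox{Feas}_{x_0}(\po) = \{ v \in \RR^N : v(i) \ge 0 \ \forall i \in I^c\}$. The transversality condition asks that the nullspace ${\cal N}(A)$, a random subspace of dimension $N - n$ (by genericity it is exactly $N-n$-dimensional, since some $N-n$ columns of the defining matrix $B$ are independent), meets this cone only at the origin.

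Next I would transfer the question from the subspace ${\cal N}(A)$ to the row space $V_B$ of the orthant-symmetric generating matrix $B$; note $\mbox{Prob}\{{\cal N}(A) \cap \mbox{Feas}_{x_0}(\po) = \{0\}\}$ is exactly what we want, and ${\cal N}(A) = V_B$ (the row span of $B$) under the hypotheses. A vector $w \in V_B$ is a nonzero element of the feasible cone iff $w = B^T c$ for some $c$ and $w(i) \ge 0$ for all $i \in I^c$, i.e. iff the $N-k$ linear functionals given by the columns $\{b_i : i \in I^c\}$ of $B$ all evaluate nonnegatively at $c$ (with no sign restriction on the remaining coordinates). The key reformulation: ${\cal N}(A)$ misses the feasible cone nontrivially iff there is NO $c \ne 0$ with $\langle b_i, c\rangle \ge 0$ for all $i \in I^c$ — equivalently, the $N-k$ points $\{b_i : i \in I^c\} \subset \RR^{N-n}$ do not all lie in any closed halfspace through the origin. (I will need a small argument handling the coordinates in $I$, but since those are unconstrained the effective ambient dimension drops to $N-n$; restricting $B$ to its $I^c$-columns still yields an orthant-symmetric, general-position configuration of $N-k$ points in $\RR^{N-n}$.) Therefore
\[
\mbox{Prob}\{\mbox{Transverse}(A,x_0,\po)\} = 1 - \mbox{Prob}\{\mbox{the } N-k \mbox{ points lie in a halfspace}\}.
\]

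Finally I would invoke Wendel's Theorem: orthant-symmetry of $B$ makes the columns $\{b_i\}$ a centrally symmetric ensemble, genericity puts them in general position, and they live in $\RR^{N-n}$, so the probability that all $N-k$ of them lie in some halfspace is $P_{N-n, N-k}$ in the notation $P_{m,M}$ with $m = N-n$, $M = N-k$. Combining with the previous display gives $\mbox{Prob}\{\mbox{Transverse}(A,x_0,\po)\} = 1 - P_{N-n,N-k}$, as claimed. I expect the main obstacle to be the careful bookkeeping in the reduction step: precisely relating the event ``subspace $V_B$ avoids the feasible cone'' to the event ``the projected columns lie in a halfspace,'' including the boundary/degenerate cases (genericity is what rules these out almost surely) and verifying that the $I$-coordinates can be quotiented out cleanly so that orthant-symmetry and general position survive the restriction to the $I^c$-columns. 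Everything downstream is a direct citation of Wendel and a one-line arithmetic identification of the parameters.
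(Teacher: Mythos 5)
Your proposal is correct and follows essentially the same route as the paper: parameterize $\mathcal{N}(A)$ as $\{B^Tc\}$, note that transversality fails exactly when some $c\neq 0$ makes the $N-k$ columns of $B$ indexed by the zero set all have nonnegative inner product with $c$, i.e.\ those columns lie in a common halfspace of $\RR^{N-n}$, and then apply Wendel's Theorem with $m=N-n$, $M=N-k$. The points you flag (quotienting out the unconstrained coordinates, sign-symmetry and general position of the restricted columns) are handled in the paper at the same level of detail, so no genuinely new work is missing.
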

\begin{proof}
Exchangeability of the columns implies that
\[
\mbox{Prob}\{(\mbox{Transverse($A,x_0,\po$)}) \;\;\mbox{Holds}\}
\]
does not depend on $x_0$ , but only on the number of nonzeros in $x_0$
and the size of $A$.
Therefore, let $k$ be the number of nonzeros in $x_0$, and set
\[
\pi_{k,n,N}\equiv
\mbox{Prob}\{(\mbox{Transverse($A,x_0,\po$)})  \;\;\mbox{Holds}\}.
\]

The matrix $A$ has its columns in general position.  Therefore we 
may construct a basis $b_i$ for its null space, $\na$, having exactly $N-n$ 
basis vectors.  The $N$ by $N-n$ matrix $B^T$ having the $b_i$ for its
columns generates every vector $w$ in $\na$ via a product of the form
$w=B^Tc$, where $c\in\RR^{N-n}$.

Without loss of 
generality, suppose the nonzeros of $x_0$ are in positions $i=N-k+1,\ldots,N$.
Then $\mbox{Feas}_{x_0}(\po)=\{v\; : \; v_1,\ldots v_{N-k}\ge 0\}$.
Condition $ \mbox{(Transverse($A,x_0,\po$))}$ can be restated as

\begin{equation}\label{eq:ineq}
\mbox{(Ineq)}\quad\left\{
\begin{array}{l} 
\mbox{The only vector }c\mbox{ satisfying} \\
(B^Tc)_i\ge 0,\quad i=1,\ldots,N-k, \\
\mbox{is the vector }c=0.
\end{array}
\right.
\end{equation}

Suppose the contrary to (Ineq), i.e. suppose there is a $c\ne 0$ 
solving (\ref{eq:ineq}).
Let now $\beta_i$ denote the $i$-th row of $B^T$, with $i=1,\ldots, N-k$.
Then (\ref{eq:ineq}) is the same as 
\[
\beta_i\cdot c\ge 0, \quad i=1,\ldots, N-k.
\]
Geometrically, this says that 

\hspace{2cm}
\begin{tabular}{l}
Each vector $\beta_i$, $i=1,\ldots, N-k$, \\
falls in the half-space $\beta \cdot c\ge 0$.
\end{tabular}

\noindent
Here $c$ is some fixed but arbitrary nonzero vector.
Thus the event \{(Ineq) does not hold\} is equivalent to 
the event

\hspace{2cm}
\begin{tabular}{l}
All the vectors $\beta_i$ with $i=1,\ldots, N-k$ \\
fall in \underline{some} half-space of $\RR^{N-n}$.
\end{tabular}

By our hypothesis, the vectors $\beta_i$ with $i=1,\ldots,N-k$ 
are drawn i.i.d. from a centrosymmetric distribution and are 
in general position.  We now invoke Wendel's Theorem, and it follows 
that 
\[
\pi_{k,n,N}= 1-P_{N-n,N-k}.
\]

\end{proof}

\subsection{Some Generalities about Binomial Probabilities}

The probability $P_{m,M}$ in Wendel's theorem
has a classical interpretation: it gives the probability of at most $m-1$
heads in $M-1$ tosses of a fair coin.  The usual Normal
approximation to the binomial tells us that
\[
   P_{m,M} \approx \Phi \left(  \frac{(m-1) - (M-1)/2}{\sqrt{(M-1)/4}}  \right), 
\]
with $\Phi$ the usual standard normal distribution
function $\Phi(x) = \int_{-\infty}^x e^{-y^2/2} dy/\sqrt{2\pi}$;
here the approximation symbol $\approx$ can be made
precise using standard limit theorems,
eg. appropriate for small or large deviations. In this expression,
the approximating normal has mean $(M-1)/2$ and standard deviation
$\sqrt{(M-1)/4}$.
There are three regimes of interest, for large $m$, $M$, and three
behaviors for $P_{m,M}$.
\bitem
 \item Lower Tail: $m   \ll  M/2 -\sqrt{M/4}$.  $P_{m,M} \approx 0$.
  \item Middle: $m \approx M/2$.  $P_{m,M} \in (0,1)$.
 \item  Upper Tail: $m  \gg  M/2 + \sqrt{M/4}$.  $P_{m,M} \approx 1$.
 \eitem

\subsection{Proof of Theorem \ref{thm:main_weak_orthant}}
\label{sec:asymp}

Using the correspondence $N-n \leftrightarrow m$, $N-k \leftrightarrow M$,
and the connection to Wendel's theorem, 
we have three regimes of interest:
\bitem
\item  $N-n \ll (N-k)/2$
\item $N-n  \approx (N-k)/2$
\item $N-n \gg (N-k)/2$
\eitem
In the proportional-dimensional framework, the above discussion
translates into three separate regimes, and separate
behaviors we expect to be true:
\bitem
\item Case 1: $\rho < \rho_W(\delta;\h)$.  $P_{N_n-n,N_n-k_n} \goto 0$.
\item Case 2:  $\rho = \rho_W(\delta;\h)$. $P_{N_n-n,N_n-k_n}  \in (0,1)$.
\item Case 3  $\rho > \rho_W(\delta;\h)$. $P_{N_n-n,N_n-k_n} \goto 1$.
\eitem

Case $2$ is trivially true, but it has no role in the statement of Theorem
 \ref{thm:main_weak_orthant}.  Cases 1 and 3 correspond exactly to the
 two parts of (\ref{eq:weak_main}) that we must prove.
 
To prove Cases 1 and 3, we need
an upper bound deriving from standard large-deviations analysis 
of the lower tail of the binomial.
\begin{lemma} \label{lem:regime1}
Let $N-n < (N-k)/2$.
\begin{equation} \label{eq:bnd_regime1}
  P_{N-n,N-k} \leq n^{3/2} \exp \left( N \psiwpo\left(\frac{n}{N},\frac{k}{n}\right) \right)
\end{equation}
where the exponent is defined as
\begin{equation}\label{eq:weak_exponent}
\psiwpo(\delta,\rho):=H(\delta)+\delta H(\rho)-H(\rho\delta)-(1-\rho\delta)\log 2
\end{equation}
with $H(\cdot)$ the Shannon Entropy (\ref{eq:Shannon})
\end{lemma}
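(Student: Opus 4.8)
\emph{Proof strategy.} The plan is to recognize $P_{N-n,N-k}$ from Wendel's formula as a lower-tail fair-coin probability, dominate it by its largest term, and apply a single Stirling (entropy) bound. Put $M=N-k$ and $m=N-n$; Theorem \ref{thm:wendel} gives $P_{N-n,N-k}=2^{-(M-1)}\sum_{\ell=0}^{m-1}\binom{M-1}{\ell}$. The hypothesis $N-n<(N-k)/2$ reads $2m\le M-1$, so $m-1<(M-1)/2$ and the coefficients $\binom{M-1}{\ell}$ increase over $0\le\ell\le m-1$; since $\binom{M-1}{m-1}\le\binom{M}{m}$ and $2^{-(M-1)}=2\cdot 2^{-M}$, I would first bound
\[
P_{N-n,N-k}\;\le\; m\,2^{-(M-1)}\binom{M-1}{m-1}\;\le\;2m\cdot 2^{-M}\binom{M}{m}.
\]
(One can avoid even this step by noting that $\{\mathrm{Bin}(M-1,\tfrac12)\le m-1\}\subseteq\{\mathrm{Bin}(M,\tfrac12)\le m\}$ and invoking the Chernoff bound directly, which removes the polynomial factor altogether; but the crude route above already suffices for (\ref{eq:bnd_regime1}).)

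Next I would apply the elementary inequality $\binom{M}{m}\le\exp\!\big(M\,H(m/M)\big)$, with $H$ the base-$e$ entropy (\ref{eq:Shannon}), obtaining
\[
P_{N-n,N-k}\;\le\;2m\,\exp\!\Big(M\big[H(m/M)-\log 2\big]\Big),
\]
and it would then remain only to identify the exponent. Writing $\delta=n/N$, $\rho=k/n$ one has $M=N(1-\rho\delta)$ and $m/M=(1-\delta)/(1-\rho\delta)$, with $1-m/M=\delta(1-\rho)/(1-\rho\delta)$. Expanding $H$, splitting $\rho\delta\log(\rho\delta)=\rho\delta\log\rho+\rho\delta\log\delta$ so that the $\log\rho$ terms cancel, and grouping the $\log(1-\rho\delta)$ terms (whose coefficients sum to $(1-\delta)+\delta(1-\rho)=1-\rho\delta$), a short computation gives
\[
M\big[H(m/M)-\log 2\big]\;=\;N\Big(H(\delta)+\delta H(\rho)-H(\rho\delta)-(1-\rho\delta)\log 2\Big)\;=\;N\,\psiwpo(n/N,k/n),
\]
which is exactly the exponent (\ref{eq:weak_exponent}). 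This identity is the only genuine computation in the argument; it is routine but deserves care because of the several logarithmic cancellations.

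Finally, for the polynomial prefactor: since $k\ge 0$, the hypothesis forces $N-n<(N-k)/2\le N/2$, hence $n>N/2$ and $m=N-n<n$, so the prefactor $2m$ is below $n^{3/2}$ throughout this regime (immediate from $2m<2n$, together with inspection of the few pairs $(k,n,N)$ with $2n>n^{3/2}$, i.e. $n\le 3$). This yields (\ref{eq:bnd_regime1}). I do not expect any conceptual obstacle; the only points requiring attention are the entropy identity above and the bookkeeping near the regime boundary $m/M\to\tfrac12$, where $\psiwpo\to 0$ — but there $2m\cdot 2^{-M}\binom{M}{m}$ already comfortably exceeds $P_{N-n,N-k}$, so neither the $\pm1$ index shifts nor the polynomial factor create any difficulty.
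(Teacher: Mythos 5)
Your proof is correct, and while it shares the paper's skeleton -- bound the Wendel sum by (number of terms) times its largest term, then pass to an entropy exponent -- the way you extract the exponent is genuinely different and a bit cleaner. The paper rewrites the dominant coefficient via the identity $\binom{N-k}{n-k}=\binom{N}{n}\binom{n}{k}\binom{N}{k}^{-1}$ and then applies two-sided Stirling-type entropy estimates to the three binomial coefficients separately; the three terms $H(n/N)$, $(n/N)H(k/n)$, $-H(k/N)$ of $\psiwpo$ arise one from each factor, and the lower bound needed for $\binom{N}{k}$ in the denominator is what produces the constants $c_1,c_2$ and the polynomial slack absorbed into $n^{3/2}$. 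You instead keep the single coefficient $\binom{M}{m}$ with $M=N-k$, $m=N-n$, use only the elementary upper bound $\binom{M}{m}\le e^{MH(m/M)}$, and verify the exact algebraic identity $M[H(m/M)-\log 2]=N\,\psiwpo(n/N,k/n)$ -- which checks out (the $\log N$ terms cancel since $(1-\rho\delta)-(1-\delta)-\delta(1-\rho)=0$) and is really the paper's three-binomial identity in entropy form. Your route needs no lower bound on any binomial coefficient, so the prefactor is only $2m$ (or vanishes entirely via the Chernoff/relative-entropy variant you mention), and your verification that $2m\le n^{3/2}$ in the regime $n>N/2$ (with the trivial check for $n\le 3$) is sound; the monotonicity of $\binom{M-1}{\ell}$ up to $\ell=m-1$ is justified by $2m\le M-1$ exactly as you say. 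Both arguments yield (\ref{eq:bnd_regime1}).
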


\noindent
{\bf Proof.} Upperbounding the sum in $P_{N-n,N-k}$ by $N-n-1$ times 
${N-k-1 \choose N-n}$ we arrive at
\begin{equation}\label{eq:comb_weak}
P_{N-n,N-k}\le 2^{N-k-1}\frac{n-k}{N-k}\cdot (N-k+1)
{N\choose n} {n\choose k} {N\choose k}^{-1}.
\end{equation}
We can   bound ${m \choose \gamma\cdot m}$ for $\gamma<1$
using the Shannon entropy (\ref{eq:Shannon}):
\begin{equation}\label{eq:choosebound}
c_1 n^{-1/2} e^{mH(\gamma)}\le
{m\choose \gamma\cdot m} 
\le c_2 e^{mH(\gamma)}
\end{equation}
where  $c_1:=\frac{16}{25}\sqrt{2/\pi}$, 
$c_2:=5/4\sqrt{2\pi}$.
Recalling the definition of $\psiwpo$, we obtain  (\ref{eq:bnd_regime1}). \qed

We will now consider Cases 1 and 3, and prove the corresponding conclusion.


{\bf Case 1: $\rho<\rho_W(\delta;\po)$.}
The threshold function $\rho_W(\delta;\po)$ is defined as the zero level curve 
$\psi_W^H(\delta,\rho_W(\delta;\po))=0$; thus
for any $\rho$ strictly below $\rho_W(\delta;\po)$, the exponent 
$\psiwpo(\delta,\rho)$ is strictly negative.
Lemma \ref{lem:regime1} thus implies that 
$P_{N_n-n,N_n-k_n}\goto 0$ as $n\goto\infty$.

\begin{figure}[h!]
\begin{tabular}{c}
\psfig{figure=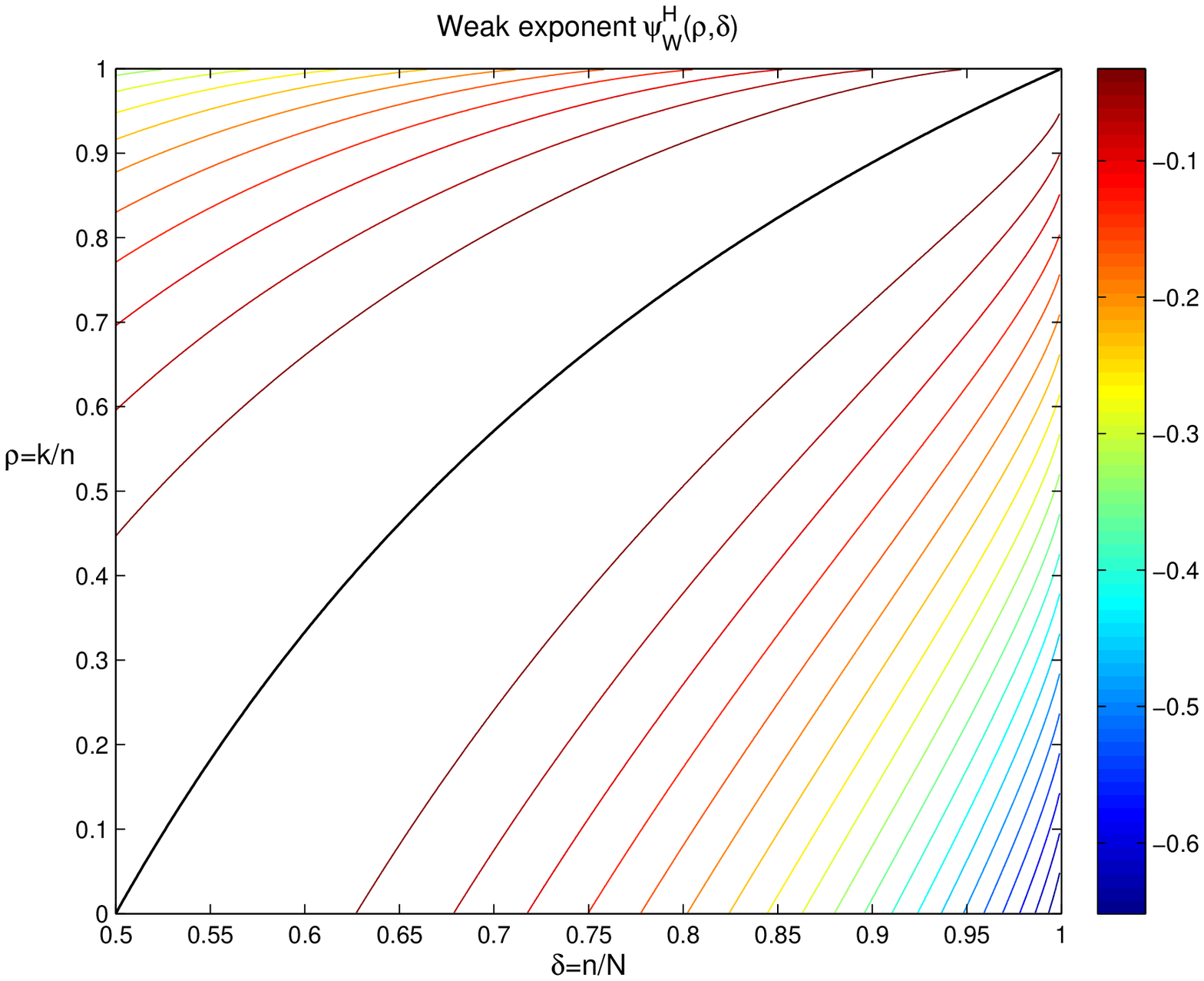,height=2.1in,width=4.3in} 
\end{tabular}
\caption{Exponent for the weak phase transition, $\psiwpo(\rho,\delta)$, 
(\ref{eq:weak_exponent}), 
which has its zero level curve at $\rho_W(\delta;\po)$, 
equation (\ref{eq:rhoW}). 
The projected hypercube has the same weak phase transition and exponent 
$\psiwh\equiv\psiwpo$.}\label{fig:weak_exponent}
\end{figure}

{\bf Case 3: $\rho>\rho_W(\delta;\po)$.}
Binomial probabilities have a standard symmetry (relabel every `head' outcome
as a 'tail', and vice versa). It follows that $P_{m,M} = 1 - P_{M-m,M}$.
We have $P_{N-k,N-n}=1-P_{N-k,n-k}$. In this case $N-n>(N-k)/2$, 
so Lemma \ref{lem:regime1} tells us that
$P_{N-k,n-k}\goto 0$ as $n\goto\infty$; we conclude $P_{N-k,N-n}\goto 1$ 
as $n\goto\infty$.

\subsection{Proofs of Theorems \ref{thm:main_weak_hypercube} and 
\ref{thm:second_weak_hypercube}}\label{sec:weak_hypercube_proof}

We derive the exact non-asymptotic result
Theorem \ref{thm:finite_samp_hyp} from  Theorem \ref{thm:isom}
by symmetry.
The limit results in Theorems 
\ref{thm:main_weak_hypercube} and 
\ref{thm:second_weak_hypercube} follow immediately from
asymptotic analysis of Section \ref{sec:asymp}.

We begin as before, relating face counts to probabilities of survival.
\begin{equation}\label{eq:ave_h}
 \frac{{\cal E}f_k(A\h)}{f_k(\h)} = 
\mbox{Ave}_F \left[ \mbox{Prob}\{ AF \mbox{ is a $k$-face of } A\h \}\right].
 \end{equation}
Here $\mbox{Ave}_F $ denotes the average over $k$-faces of $\h$.

As before, we assume exchangeable columns as a 
calculation device, allowing us to focus on one $k$-face, but 
compute the average.  Under exchangeability, for any fixed $k$-face $F$,
\begin{equation}\label{eq:ave_h2}
 \frac{{\cal E}f_k(A\h)}{f_k(\h)} = 
 \mbox{Prob}\{ AF \mbox{ is a $k$-face of } A\h \}.
 \end{equation}

We also again reformulate  matters in terms of transversal intersection.
\begin{lemma}\label{lem:face_transverse_h}
Let $x_0$ be a vector in $\h$ with exactly $k$ nonzeros.  
Let $F$ denote the associated $k$-face of $\h$.  For an 
$n\times N$ matrix $A$ the following are equivalent:
\begin{tabular}{ll}
(Survive($A,F,\h$)): & $AF$ is a $k$-face of $A\h$, \\
(Transverse($A,x_0,\h$)): & ${\cal N}(A)\cap \mbox{Feas}_{x_0}(\h) = \{0\}$.\\
\end{tabular}
\end{lemma}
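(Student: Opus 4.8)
The plan is to establish Lemma~\ref{lem:face_transverse_h} in exact parallel with its orthant counterpart, Lemma~\ref{lem:face_transverse_po}, the key point being that the two statements concern the \emph{same} local geometry at $x_0$. Since $x_0\in\h$ has exactly $k$ nonzeros, its $N-k$ inactive coordinates -- say $i=1,\dots,N-k$ -- all sit at the lower bound $0$, while its $k$ active coordinates lie strictly inside $(0,1)$, so $x_0$ is a relative-interior point of the associated $k$-face $F$. A perturbation $v$ is feasible at $x_0$ precisely when $v_i\ge 0$ for $i=1,\dots,N-k$: for small $t>0$ moves in the active directions stay in $\h$ because those coordinates are strictly interior, and the only binding constraints are nonnegativity of the inactive coordinates. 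Hence $\mbox{Feas}_{x_0}(\h)=\{v:\ v_i\ge 0,\ i\le N-k\}$, which is \emph{identical} to $\mbox{Feas}_{x_0}(\po)$ at the corresponding orthant face. In particular (Transverse($A,x_0,\h$)) is literally the same linear condition as (Transverse($A,x_0,\po$)), and the content of the lemma is the equivalence of this condition with (Survive($A,F,\h$)).

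For the direction (Transverse)$\,\Rightarrow\,$(Survive) I would argue in two steps. First, the $k$-dimensional linear span of $F-x_0$ lies inside $\mbox{Feas}_{x_0}(\h)$, so transversality forces $\na$ to meet it only at $0$; hence $A$ is injective on $\mathrm{aff}(F)$ and $AF$ is genuinely $k$-dimensional. Second, I would produce $\mu\in\RR^n$ whose pullback $A^{T}\mu$ exposes exactly $F$ inside $\h$, i.e. $(A^{T}\mu)_i<0$ for $i\le N-k$ and $(A^{T}\mu)_i=0$ for $i>N-k$; such a $\mu$ then exposes $AF$ inside $A\h$, because for any $x\in\h$ with $x\notin F$ the value $\langle A^{T}\mu,x\rangle$ is strictly below its maximum over $\h$, so $\langle\mu,Ax\rangle$ is strictly below its maximum over $A\h$. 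Combined with the first step, this makes $AF$ a $k$-face. The existence of $\mu$ is exactly the assertion that $\mathrm{range}(A^{T})=\na^{\perp}$ meets the relative interior of the normal cone $\mbox{Feas}_{x_0}(\h)^{\circ}$, and that follows from the transversality hypothesis by a Gordan/Motzkin theorem of the alternative: the only obstruction to solving that strict system is the existence of a nonzero $w\in\na$ with $w_i\ge 0$ for $i\le N-k$, i.e. a nonzero element of $\na\cap\mbox{Feas}_{x_0}(\h)$, which the hypothesis rules out.

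For the converse I would prove the contrapositive, imitating the proof of Lemma~\ref{thm:strong_zonotope}. Choose a nonzero $w\in\na\cap\mbox{Feas}_{x_0}(\h)$ and set $x_t:=x_0+tw$; for small $t>0$ one has $x_t\in\h$ and $Ax_t=Ax_0$. If $w$ is supported on the active coordinates then $x_t\in F$ and $x_t\ne x_0$, so $A$ collapses $\mathrm{aff}(F)$ and $AF$ has dimension below $k$. Otherwise $w$ has a strictly positive inactive component and $x_t$ lies on a face of $\h$ strictly larger than $F$; writing $G:=(A|_\h)^{-1}(AF)$, which is a face of $\h$ whenever $AF$ is a face of $A\h$ (preimage of a face under the affine surjection onto $A\h$), we get $F\subsetneq G$ while $AG=AF$, so $F$ fails to be the face realizing $AF$. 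Either way $F$ does not survive as a genuine $k$-face. The main obstacle, just as in the orthant case, is precisely this bookkeeping: one must fix the exact meaning of ``$AF$ is a $k$-face of $A\h$'' so as to exclude several faces of $\h$ collapsing onto a single face of $A\h$ -- otherwise, e.g., the interior vertex exhibited in Lemma~\ref{thm:strong_zonotope} would literally map to a $0$-face of $A\h$ without ``surviving.'' Once ``Survive'' is read (as it must be for the averaging identity~(\ref{eq:ave_h2})) to also require that $A$ be one-to-one on $F$, the two implications above close the loop and the remaining verifications are routine.
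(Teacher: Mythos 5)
Your proposal is correct in substance but follows a genuinely different route from the paper, which in fact omits this proof entirely, declaring it ``similar'' to that of Lemma \ref{lem:face_transverse_po}. For (Transverse)$\Rightarrow$(Survive) the paper's model argument is indirect: it supposes $AF$ is not a face, asserts the relevant image point is then interior to the projected body, and extracts a feasible null direction from an interior preimage. You argue directly, producing $z=A^{T}\mu\in\na^{\perp}$ with $z_i<0$ on the coordinates fixed at $0$ and $z_i=0$ on the free ones, so that $\mu$ exposes exactly $AF$, whose dimension is $k$ by injectivity of $A$ on $\mathrm{aff}(F)$. Your theorem-of-the-alternative step is sound: the precise obstruction to the strict system is a $w\in\na$ with $w_i\ge 0$ on the fixed coordinates and not vanishing there, which transversality certainly excludes; this direction of yours is tighter than the paper's sketch, since it avoids the loose dichotomy ``not a $k$-face, hence interior.'' For (Survive)$\Rightarrow$(Transverse), your Case 1 is fine; the delicate point is your Case 2, where $F\subsetneq G:=(A|_{\h})^{-1}(AF)$ and $AG=AF$. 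You resolve it by strengthening ``Survive'' to require that $F$ be the full preimage, arguing this reading is forced by (\ref{eq:ave_h2}); the paper instead resolves the same issue (in the orthant proof) by invoking general position, asserting without proof that it yields uniqueness. Some such fix really is needed: with $A=(0\;\;1)$ acting on $[0,1]^2$ and $x_0$ the origin ($k=0$), the set $AF=\{0\}$ is literally a $0$-face of $A\h$ while transversality fails, so the set-level statement is false for arbitrary $A$. If you want the lemma exactly as stated, under the general-position hypothesis the paper tacitly carries, you can close Case 2 without redefining Survive: $AG=AF$ forces each column of $A$ indexed by a free coordinate of $G$ outside $F$ to lie in the span of the $k$ columns indexing $F$, giving $k+1\le n$ dependent columns when $k<n$, a contradiction; the remaining possibility, $w$ supported on the $k$ face coordinates, is Case 1 (and is also excluded by genericity, since every nonzero null vector then has more than $n$ nonzeros). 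One last small point: your computation of $\mbox{Feas}_{x_0}(\h)$ covers only faces whose fixed coordinates sit at $0$, consistent with the literal ``$k$ nonzeros'' hypothesis, but the paper later applies the lemma, via Lemma \ref{lem:hyper_orth}, to faces with coordinates fixed at $1$; the obvious sign flips ($v_i\le 0$ at coordinates fixed at $1$) extend your argument verbatim.
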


We next connect the hypercube to the positive orthant.
Informally, the point is that the positive orthant 
in some sense shares faces
 with the "lower faces" of the hypercube.

Formally, let $x_0$ be a vector having $x(i) = 0, 1 \leq i \leq N-k-1$,
and $x(i)  = 1/2$, $N-k \leq i \leq N$.  Then $x_0$ belongs
to both $\h$ and $\po$. It makes sense to define 
the two cones $Feas_{x_0}(\h)$ and $Feas_{x_0}(\po)$
for this specific point $x_0$, and we immediately see
\[
  Feas_{x_0}(\h) = Feas_{x_0}(\po) .
\]
In fact this equality holds for all $x_0$ in the relative
interior of the $k$-face of $\h$ containing $x_0$. We conclude:

\begin{lemma} \label{lem:hyper_orth}
Let $F_{k,H}$ be the $k$-dimensional face of $\h$
consisting of all vectors $x$ with $x(i) = 0, 1 \leq i \leq N-k-1$,
and $0 \leq x(i) \leq 1$, $N-k \leq i \leq N$.  Let $F_{k,\RR_+}$ be the $k$-dimensional face of $\po$
consisting of all vectors $x$ with $x(i) = 0, 1 \leq i \leq N-k-1$,
and $0 \leq x(i)$, $N-k \leq i \leq N$.
Then
\begin{equation}
\mbox{Prob}\{ AF_{k,H} \mbox{ is a $k$-face of } A\h \} 
= \mbox{Prob}\{ AF_{k,\RR_+} \mbox{ is a $k$-face of } A\po \}.
\end{equation}
\end{lemma}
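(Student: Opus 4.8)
The plan is to show that the two probabilities in the statement are in fact probabilities of one and the same event; once that is seen, the equality is immediate, and indeed no hypothesis on the ensemble generating $A$ is required. The bridge is the reformulation of ``survival of a face under $A$'' as a transversality statement about $\na$ and a cone of feasible directions, already available as Lemma \ref{lem:face_transverse_h} for the hypercube and Lemma \ref{lem:face_transverse_po} for the orthant.

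First I would fix a common relative-interior point: take $x_0\in\RR^N$ with $x_0(i)=0$ for $1\le i\le N-k-1$ and $x_0(i)=1/2$ for $N-k\le i\le N$. This $x_0$ lies in the relative interior of $F_{k,H}$ and simultaneously in the relative interior of $F_{k,\RR_+}$; and since the feasible-direction cone $\mbox{Feas}_x(P)$ of a polyhedron $P$ is constant as $x$ ranges over the relative interior of a fixed face of $P$, a single choice of $x_0$ serves for both polyhedra. Applying Lemma \ref{lem:face_transverse_h} then rewrites the event ``$AF_{k,H}$ is a $k$-face of $A\h$'' as ``$\na\cap\mbox{Feas}_{x_0}(\h)=\{0\}$'', and Lemma \ref{lem:face_transverse_po} rewrites ``$AF_{k,\RR_+}$ is a $k$-face of $A\po$'' as ``$\na\cap\mbox{Feas}_{x_0}(\po)=\{0\}$''.

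The step that carries the content — and the one I expect to need the most care — is the identity $\mbox{Feas}_{x_0}(\h)=\mbox{Feas}_{x_0}(\po)$. For $\po$, a direction $v$ is feasible at $x_0$ exactly when $v(i)\ge 0$ at every coordinate where $x_0$ vanishes, i.e.\ when $v(i)\ge 0$ for $1\le i\le N-k-1$, with the remaining coordinates of $v$ unrestricted. For $\h$, feasibility of $v$ at $x_0$ would additionally demand $v(i)\le 0$ wherever $x_0(i)=1$; but $x_0$ was chosen with its free coordinates at $1/2$, so no coordinate equals $1$ and those upper constraints are vacuous, leaving again exactly $\{v:v(i)\ge 0,\ 1\le i\le N-k-1\}$. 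Hence the two feasible cones are literally the same subset of $\RR^N$, the two transversality events coincide, and therefore so do the two survival probabilities. The only genuine subtlety to flag is why this works specifically for the ``lower'' faces described, with all fixed coordinates equal to $0$ and none to $1$: for a face of $\h$ having a coordinate pinned at $1$, $\mbox{Feas}_{x_0}(\h)$ acquires a sign constraint in the opposite direction that has no counterpart on $\po$, and the identification fails. It is also worth recording that, the argument being purely set-theoretic about events, it uses nothing about orthant-symmetry or genericity of $A$ at this stage.
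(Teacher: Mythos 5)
Your argument is correct and, for the face actually named in the statement, it is the same mechanism the paper uses: fix $x_0$ with the free coordinates at $1/2$, observe $\mbox{Feas}_{x_0}(\h)=\mbox{Feas}_{x_0}(\po)$, and apply Lemmas \ref{lem:face_transverse_h} and \ref{lem:face_transverse_po} so that the two survival events are literally the same event; your remark that no distributional hypothesis on $A$ is needed is also right for this literal statement. The difference is one of scope: the paper's written proof of this lemma spends most of its effort on the $k$-faces of $\h$ that are \emph{not} of this lower type, i.e.\ faces with some coordinates pinned at $1$. There the cone $\mbox{Feas}_{x_0}(\h)$ acquires reversed sign constraints, the two events no longer coincide, and the paper invokes orthant symmetry of the nullspace basis $B$ to show that the probability of the sign-constrained event (\ref{eq:ineq_h}) is independent of how many fixed coordinates sit at $1$ rather than $0$, hence equal to the probability of (\ref{eq:ineq}). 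That extra step is what the lemma is really needed for downstream: the averaging identity (\ref{eq:ave_h})--(\ref{eq:ave_h2}) runs over \emph{all} $k$-faces of $\h$, and column exchangeability alone cannot carry a face with entries pinned at $1$ to one with entries pinned at $0$, so without the orthant-symmetry argument the lemma would not suffice to derive Theorem \ref{thm:finite_samp_hyp}. In short, your proof fully settles the statement as written -- and is, if anything, more careful about why the two feasible cones agree and why the choice of $1/2$ matters -- but the observation that ``nothing about the ensemble is used'' should not be carried over to the lemma's actual role in the paper, where orthant symmetry does the remaining work.
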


Combining (\ref{eq:ave_h}) and Lemma \ref{lem:hyper_orth}
we obtain the non-asymptotic Lemma \ref{thm:finite_samp_hyp}
from the corresponding non-asymptotic result for the positive orthant.


\subsection{Proof of Theorem \ref{thm:main_strong}}\label{sec:proof_strong}
$P_{N-n,N-k}$ is the probability that {\it one} fixed $k$-dimensional face  $F$
of $\po$ generates a $k$-face $AF$ of $A\po$.
The probability that {\em some} $k$-dimensional face generates a $k$-face
can be upperbounded, using Boole's inequality, by $f_k(\po)\cdot P_{N-n,N-k}$.

From (\ref{eq:choosebound}), (\ref{eq:bnd_regime1}), and $f_k(\po)={N \choose k}$ 
we have
\[
f_k(\po)\cdot P_{N-n,N-k}\le n^{3/2} \exp(N\psispo(\delta_n,\rho_n))
\]
where $\psispo$ was defined earlier in (\ref{eq:psis}), as
\begin{equation}\label{eq:psistrong}
\psispo(\delta,\rho):=H(\delta)+\delta H(\rho)-(1-\rho\delta)\log 2.
\end{equation}
Recall that for $\delta\ge 1/2$,  
$\rho_S(\delta;\po)$ is the zero crossing of $\psispo$.
For any $\rho<\rho_S(\delta;\po)$ we have 
$\psispo(\delta,\rho)<0$ and as a result (\ref{eq:strongLim}) follows.

\section{Contrasting the Hypercube with Other Polytopes}\label{sec:existing}

The theorems in Section \ref{sec:intro} contrast strongly with
existing results for other polytopes.

\subsection{Non-Existence of Weak Thresholds at $\delta < 1/2$}

Theorem \ref{thm:second_weak_hypercube} identifies a 
region of $(\frac{n}{N},\frac{k}{N})$ where the typical
random zonotope has nearly as many 
$k$-faces as its generating hypercube; in particular,  if $n < N/2$, it
has many fewer $k$-faces than the hypercube, for every $k$.    
This behavior at $n/N < 1/2$ is quite different from 
the behavior of typical random projections
of the simplex and the cross-polytope. Those polytopes
have  $f_k(AQ) \approx f_k(Q)$ for quite a large range of $k$ even at
relatively small values of $k/n$, \cite{DoTa07_CF}, see Figure 
\ref{fig:simplex_crosspolytope}.

\begin{figure}[h!]
\begin{tabular}{c}
\psfig{figure=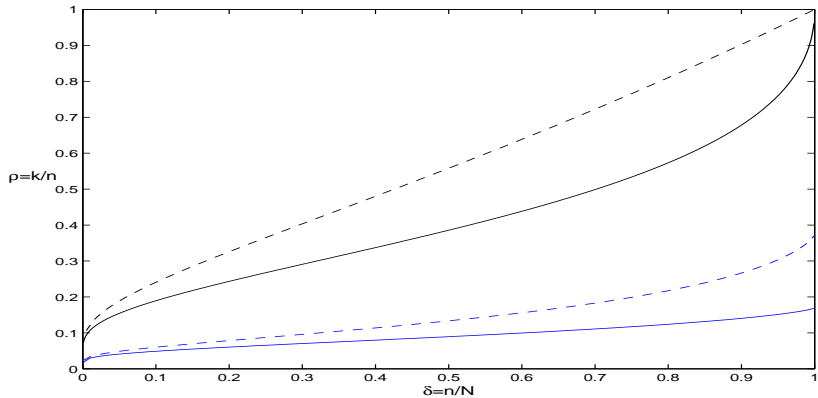,height=2.1in,width=4.3in} 
\end{tabular}
\caption{Weak thresholds for the simplex, $\rho_W(\delta;T^{N-1})$ 
(black-dash), and cross-polytope, $\rho_W(\delta;C^N)$ (black-solid).   
Consider sequences obeying the proportional-dimensional
asymptotic with parameters $\delta$, $\rho$. For $(\delta,\rho)$
below these curves, and for large $n$, each projected 
polytope has nearly as many $k$-faces as its generator; above these curves
the projected polytope has noticeably fewer. 
Strong thresholds for the simplex, $\rho_S(\delta;T^{N-1})$ (blue-dash), 
and cross-polytope, $\rho_S(\delta;C^N)$ (blue-solid).  For $(\delta,\rho)$
below these curves, and for large $n$, each projected 
polytope and its generator typically have exactly the same number 
of $k$-faces.}\label{fig:simplex_crosspolytope}
\end{figure}

\subsection{Non-Existence of Strong Thresholds for Hypercube}
Lemma \ref{thm:strong_zonotope} shows that projected zonotopes always 
have strictly fewer 
$k$-faces than their generators $f_k(A\h) < f_k(\h)$,
for every $n < N$.
this is again quite different from the situation with
the simplex and the cross-polytope,
where we can even have $n \ll N$ and still find $k$ for which
$f_k(AQ) = f_k(Q)$, \cite{DoTa07_CF}, see 
Figure \ref{fig:simplex_crosspolytope}.

\subsection{Universality of weak phase transitions}

For Theorems \ref{thm:main_weak_hypercube} and \ref{thm:second_weak_hypercube}, 
$A$ can be sampled from any
ensemble of random matrices having an 
orthant-symmetric and generic random null space.
Our result is thus {\em universal} across a 
wide class of matrix ensembles.   

In proving weak and strong threshold 
results for the simplex and cross-polytope,
we required $A$ to either be a random ortho-projector
or to have  Gaussian iid entries.  Thus, what we proved
for those families of regular polytopes  applies to a much more limited
range of matrix ensembles than
what has now been proven for hypercubes.    

Our empirical studies suggest that the
same ensembles of matrices which `work' for the
hypercube weak threshold also `work' for the
simplex and cross-polytope thresholds.
It seems to us that the universality across
matrix ensembles proven here may point to a much larger
phenomenon, valid also for other polytope families.  
For our empirical studies
see \cite{DoTa08_universality}.
 
In fact, even in the hypercube case, the weak threshold phenomenon
may be more general than what can be proven today;
it seems also to hold for some matrix ensembles
that may not have an orthant-symmetric null space.


\section{Contrasting  the Cone with the Hypercube}

The weak Cone threshold
depends very much more delicately on details about
$A$ than do the hypercube thresholds; it really makes
a difference to the results if the matrix $A$ is not `zero-mean'.

\subsection{The Low-Frequency Partial Fourier Matrix}
\label{sec:PartFourier}

Consider the special partial Fourier matrix made 
only of the $n$ lowest frequency entries.
\begin{corollary}\label{cor:fourier}
Assume $n$ is odd and let
\begin{equation}\label{eq:fourier}
\Omega_{ij}=\left\{ \begin{array}{ll} 
\cos\left(\frac{\pi(j-1)(i-1)}{N}\right) & i=1,3,5,\ldots,n \\
\sin\left(\frac{\pi(j-1)i}{N}\right) & i=2,4,6,\ldots,n-1.
\end{array}\right.
\end{equation}
Then 
\[
f_k(\Omega \po) = f_k(\po), \quad k=0,1, \dots ,  \frac{1}{2}(n-1) .
\]
\end{corollary}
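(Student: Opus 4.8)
The plan is to convert the statement into a purely linear-algebraic fact about the sign patterns of vectors in $\mathcal{N}(\Omega)$, and then to prove that fact with elementary properties of nonnegative trigonometric polynomials. By Lemma \ref{lem:face_transverse_po} together with the face-counting identity underlying (\ref{eq:aveFace}) — which, for a fixed matrix $A$ with columns in general position, expresses $f_k(A\po)$ as the number of $k$-faces $F$ of $\po$ for which $AF$ is a $k$-face of $A\po$ — we have $f_k(\Omega\po)=f_k(\po)$ exactly when $\mathcal{N}(\Omega)\cap\mathrm{Feas}_{x_0}(\po)=\{0\}$ for every $k$-face of $\po$. A $k$-face of $\po$ is the set of $x\in\po$ vanishing outside some index set $T$ with $|T|=k$, and at a relative-interior point $x_0$ one has $\mathrm{Feas}_{x_0}(\po)=\{v:v_i\ge 0,\ i\notin T\}$. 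Hence every $k$-face survives iff no nonzero $w\in\mathcal{N}(\Omega)$ has all of its strictly negative coordinates inside a set of size $k$; since $\mathcal{N}(\Omega)$ is a subspace (so $w\in\mathcal{N}(\Omega)\Leftrightarrow -w\in\mathcal{N}(\Omega)$), this is equivalent to the statement that every nonzero $w\in\mathcal{N}(\Omega)$ has at least $k+1$ strictly negative coordinates. As this is needed for all $k\le m:=\tfrac12(n-1)$, it suffices to prove that every nonzero $w\in\mathcal{N}(\Omega)$ has at least $m+1$ strictly negative coordinates.

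Next I would identify the row space of $\Omega$. Put $\phi_j:=2\pi(j-1)/N$, so that $\phi_1,\dots,\phi_N$ are distinct. Using $\pi(j-1)(i-1)/N=\tfrac{i-1}{2}\phi_j$ and $\pi(j-1)i/N=\tfrac{i}{2}\phi_j$, one checks that the rows of $\Omega$ are precisely the vectors $(\cos(t\phi_j))_{j=1}^{N}$ for $t=0,1,\dots,m$ and $(\sin(t\phi_j))_{j=1}^{N}$ for $t=1,\dots,m$. Thus the row space of $\Omega$ equals $\{(p(\phi_1),\dots,p(\phi_N)):p\in\mathcal{T}_m\}$, where $\mathcal{T}_m$ denotes the real trigonometric polynomials of degree $\le m$. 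A Vandermonde determinant in the variables $z_j=e^{i\phi_j}$ shows that any $n=2m+1$ columns of $\Omega$ are linearly independent; in particular $\mathcal{N}(\Omega)$ contains no nonzero vector supported on fewer than $n$ coordinates.

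The heart of the argument is then the following. Suppose, for contradiction, that some nonzero $w\in\mathcal{N}(\Omega)$ has its strictly negative coordinates exactly on a set $S$ with $s:=|S|\le m$. Consider
\[
p(\phi):=\prod_{j\in S}\bigl(2-2\cos(\phi-\phi_j)\bigr)=\prod_{j\in S}|e^{i\phi}-e^{i\phi_j}|^2,
\]
a trigonometric polynomial of degree $s\le m$, hence $p\in\mathcal{T}_m$; it is nonnegative, vanishes at $\phi_j$ for $j\in S$, and is strictly positive at $\phi_j$ for $j\notin S$. Since $(p(\phi_1),\dots,p(\phi_N))$ lies in the row space of $\Omega$ while $w$ is orthogonal to that space, $0=\sum_j w_j p(\phi_j)=\sum_{j\notin S}w_j p(\phi_j)$, and every term of the last sum is nonnegative; therefore $w_j=0$ for all $j\notin S$. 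So $w$ is a nonzero null vector supported on $S$ with $|S|\le m<n$, contradicting the previous paragraph. Hence every nonzero $w\in\mathcal{N}(\Omega)$ has at least $m+1$ strictly negative coordinates, and the reduction in the first paragraph completes the proof.

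I expect the main obstacle to be not any single hard estimate but two points requiring care: the bookkeeping that identifies the row space of $\Omega$ with the restriction of $\mathcal{T}_m$ to the equispaced grid (pairing the cosine rows $i=1,3,\dots,n$ and sine rows $i=2,4,\dots,n-1$ with the frequencies $0,1,\dots,m$ after rescaling $\theta_j=\pi(j-1)/N$ to $\phi_j=2\theta_j$), and the one genuinely load-bearing idea, namely that a nonnegative trigonometric polynomial vanishing at $s$ prescribed grid points can be taken of degree only $s$ — via the rank-one factors $|e^{i\phi}-e^{i\phi_j}|^2$ — rather than the degree $2s$ one would obtain by squaring an interpolating polynomial. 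One should also verify that the reduction in the first paragraph counts surviving faces without overcounting, which is exactly what the identity behind (\ref{eq:aveFace}) provides once $\Omega$'s columns are in general position.
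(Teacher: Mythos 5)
Your proposal is correct, and its skeleton is the same as the paper's: reduce the face-count identity to the transversality condition of Lemma \ref{lem:face_transverse_po} for each coordinate face, and then observe that transversality follows once every nonzero vector of $\mathcal{N}(\Omega)$ has more strictly negative entries than the face dimension. Where the two diverge is at that key nullspace fact. The paper simply cites \cite{DJHS92} (Lemma 2 there), stating that every highpass sequence --- i.e.\ every element of $\mathcal{N}(\Omega)$ --- has at least $m$ negative entries, whereas you give a short self-contained proof of a slightly stronger statement (at least $m+1$ strictly negative entries): you identify the row space of $\Omega$ with the trigonometric polynomials of degree $\le m$ evaluated on the grid $\phi_j=2\pi(j-1)/N$, test a putative null vector with negative set $S$, $|S|\le m$, against the nonnegative degree-$|S|$ polynomial $\prod_{j\in S}\bigl|e^{i\phi}-e^{i\phi_j}\bigr|^2$, and rule out the resulting $|S|$-supported null vector by the Vandermonde/general-position property of the columns. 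This buys two things: the argument is closed within the paper (no appeal to the structure of the proof in \cite{DJHS92}), and the $m+1$ bound covers the endpoint $k=\tfrac12(n-1)$ of the corollary, which the ``at least $m$ negative entries'' form quoted in the paper only reaches after an off-by-one adjustment (it directly gives survival only of $k$-faces with $k<m$). Your closing caveat about not overcounting surviving faces is handled at the same level of rigor as in the paper --- both defer to the identity behind (\ref{eq:aveFace}) together with general position of the columns, which your Vandermonde computation in fact verifies --- so no gap remains there either.
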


This behavior is dramatically different than the case for random
$A$ of the type considered so far, and in some sense dramatically better.

Corollary \ref{cor:fourier} is closely connected with the classical question of
{\it neighborliness}.    There are famous polytopes which can be generated
by projections $AT^{N-1}$ and have exactly as many
$k$-faces  as $T^{N-1}$  for $k \leq \lfloor n/2 \rfloor$.  A standard example
is provided by the matrix $\Omega$ defined in (\ref{eq:fourier}); it
obeys $f_k(\Omega T^{N-1}) =  f_k(T^{N-1})$, $0 \leq k \leq \lfloor n/2 \rfloor$.
(There is a vast literature touching in some way on the phenomenon
$f_k(\Omega T^{N-1}) =  f_k(T^{N-1})$. In that literature,
the polytope $\Omega T^{N-1}$ is usually called a {\it cyclic polytope},
and the columns of $\Omega$ are called points of the
{\it trigonometric moment curve}; see standard references
\cite{Gru67,Ziegler}).

Hence the matrix $\Omega$ 
offers both $f_k(\Omega T^{N-1}) = f_k(T^{N-1})$ and $f_k(\Omega\po) =  f_k(\po)$
for $0 \leq k \leq \lfloor n/2 \rfloor$.  This is exceptional.
For random $A$ of the type discussed in earlier sections,
there is a large disparity between the sets of triples $(k,n,N)$
where  $f_k(AT^{N-1}) = f_k(T^{N-1})$ -- this happens for
$k/n < \rho_S(n/N;T^{N-1})$  -- and
those where  $f_k(A\po) =  f_k(\po)$ -- this happens for
$k/n < \rho_S(n/N;\po)$.   These two strong thresholds
are displayed in Figures \ref{fig:simplex_crosspolytope} and \ref{fig:phase} respectively.

Even if we relax our notion of agreement of face counts to
weak agreement, the collections of triples where
 $f_k(AT^{N-1}) \approx f_k(T^{N-1})$ and $f_k(A\po) \approx f_k(\po)$
 are very different, because the two curves
$ \rho_W(n/N;T^{N-1})$  and $ \rho_W(n/N;\po)$ 
are so dramatically different, particularly at $n < N/2$.
 
\subsection{Adjoining a Row of Ones to $A$}

An important feature of the random matrices $A$ studied
earlier is that their random nullspace is orthant symmetric.
In particular, the positive orthant plays no distinguished role
with respect these matrices.  On the other hand,
the partial Fourier matrix $\Omega$ constructed 
in the last subsection contains a row of ones,
and thus the positive orthant has a distinguished role
to play for this matrix.  Moreover, this distinction
is crucial; we find empirically that {\it removing}
the row of ones from $\Omega$ causes the conclusion 
of Corollary \ref{cor:fourier}
to fail drastically.

Conversely,
consider the matrix $\tilde{A}$ obtained by {\it adjoining} a row of $N$ ones
to some matrix A:
\[
       \tilde{A} = \left[ \begin{array}{l} 1 \\ A \end{array} \right].
\]
Adding this row of ones to a random matrix causes a drastic shift in the strong
and weak thresholds. The following is proved in Section \ref{sec:MoreProofs}.

\begin{theorem} \label{thm:adj-shift-thresh}
Consider the proportional-dimensional asymptotic
with parameters $\delta, \rho$ in $(0,1)$.
Let the random $n-1$ by $N$ matrix $A$ have iid standard normal entries. 
Let $\tilde{A}$ denote the corresponding $n$ by $N$ matrix whose
first row is all ones and whose remaining rows are identical to those
of $A$. Then 
\begin{equation} \label{eq:altpos_weak}
\lim_{n \goto \infty}  \frac{{\cal E}f_k(\tilde{A}\po)}{f_k(\po)} = 
\left\{\begin{array}{cl}
1, & \rho<\rho_W(\delta,T^{N-1}) \\
<1, & \rho>\rho_W(\delta,T^{N-1})
\end{array}\right. .
\end{equation}
\begin{equation} \label{eq:altpos_strong}
\lim_{n \goto \infty}  P \{ f_k(\tilde{A}\po) = f_k(\po)\} = 
\left\{\begin{array}{cl}
1, & \rho<\rho_S(\delta,T^{N-1}) \\
0, & \rho>\rho_S(\delta,T^{N-1})
\end{array}\right. .
\end{equation}
\end{theorem}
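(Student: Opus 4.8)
The plan is to show that adjoining a row of ones converts the projected orthant into a copy of the projected simplex, face by face, and then to invoke the already-established simplex thresholds for a Gaussian projector. Write $A$ for the $n-1$ by $N$ Gaussian matrix and $\tilde A$ for its augmentation. I would first record two elementary facts. Since the rows of $\tilde A$ are $\mathbf{1}^T$ together with the rows of $A$, and almost surely $\mathbf{1}\notin\mbox{rowspace}(A)$, one has ${\cal N}(\tilde A)=\{w:\mathbf{1}^Tw=0\}\cap{\cal N}(A)$; and $\tilde A$ has exchangeable columns and, almost surely, columns in general position, because each $n\times n$ minor of $\tilde A$ is a polynomial in the Gaussian entries of $A$ that does not vanish identically. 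Consequently the face-averaging reduction (\ref{eq:aveFace2}) still applies to $\tilde A\po$, even though the nullspace of $\tilde A$ is not orthant-symmetric.

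Next I would set up the face-by-face correspondence. Fix $S\subset\{1,\dots,N\}$ with $|S|=k$, let $F_k\subset\po$ be the $k$-face supported on $S$, and let $F'_{k-1}=\conv\{e_i:i\in S\}\subset T^{N-1}$. Choosing $x_0$ to be the barycenter of $F'_{k-1}$, which lies in the relative interior of both faces, a direct computation gives $\mbox{Feas}_{x_0}(\po)=\{v:v_i\ge 0,\ i\notin S\}$ and $\mbox{Feas}_{x_0}(T^{N-1})=\{v:\mathbf{1}^Tv=0\}\cap\mbox{Feas}_{x_0}(\po)$. Combining with the nullspace identity,
\[
{\cal N}(\tilde A)\cap\mbox{Feas}_{x_0}(\po)={\cal N}(A)\cap\mbox{Feas}_{x_0}(T^{N-1}),
\]
so by Lemma~\ref{lem:face_transverse_po} and its simplex analogue (cf.~\cite{Do05_polytope}) the events ``$\tilde AF_k$ is a $k$-face of $\tilde A\po$'' and ``$AF'_{k-1}$ is a $(k-1)$-face of $AT^{N-1}$'' coincide, and they coincide simultaneously over all choices of $S$.

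From here both assertions follow quickly. For (\ref{eq:altpos_weak}), exchangeability and (\ref{eq:aveFace2}) give ${\cal E}f_k(\tilde A\po)/f_k(\po)=\mbox{Prob}\{\tilde AF_k\mbox{ survives}\}$, which by the correspondence equals ${\cal E}f_{k-1}(AT^{N-1})/f_{k-1}(T^{N-1})$; since $(k-1)/(n-1)\to\rho$ and $(n-1)/N\to\delta$, the known weak threshold for the Gaussian-projected simplex (\cite{Do05_polytope,DoTa05_polytope}, with the Gaussian case reduced to the orthoprojector case by \cite{BaryVitale,Bary}) yields the stated dichotomy at $\rho_W(\delta;T^{N-1})$. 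For (\ref{eq:altpos_strong}), one has $f_k(\tilde A\po)=f_k(\po)$ iff every $k$-face of $\po$ survives, and $f_{k-1}(AT^{N-1})=f_{k-1}(T^{N-1})$ iff every $(k-1)$-face of $T^{N-1}$ survives; by the simultaneous correspondence, and the fact (used already in the earlier sections) that distinct surviving faces remain distinct under a generic projector, these are the same event, so $\mbox{Prob}\{f_k(\tilde A\po)=f_k(\po)\}=\mbox{Prob}\{f_{k-1}(AT^{N-1})=f_{k-1}(T^{N-1})\}$, and the known strong simplex threshold gives the dichotomy at $\rho_S(\delta;T^{N-1})$.

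The hard part is essentially bookkeeping rather than a new estimate: I would need to track the degree shift $k\leftrightarrow k-1$, reconcile the affine $(N-1)$-dimensional simplex with the full-dimensional cone $\po$, and verify carefully that $\tilde A$ meets the exchangeability and general-position hypotheses required by the simplex theorems in spite of its fixed first row. A secondary dependency is that the ``$<1$'' and ``$\to 0$'' halves of (\ref{eq:altpos_weak})--(\ref{eq:altpos_strong}) above threshold are not reproved here but taken as black boxes from the corresponding statements for the projected simplex (see \cite{Do05_polytope,DoTa05_polytope,DoTa07_CF}).
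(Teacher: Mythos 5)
Your proposal is correct and follows essentially the same route as the paper: the paper also sets up the support-set bijection between $k$-faces of $\po$ and $(k-1)$-faces of $T^{N-1}$, shows the transversality conditions for $\tilde{A}$ on $\po$ and $A$ on $T^{N-1}$ coincide at a common relative-interior point (the paper writes $\mbox{Feas}_{x_0}(\po)=\mbox{Feas}_{x_0}(T^{N-1})+lin(x_0)$ with $\cN(\tilde{A})\cap lin(x_0)=\{0\}$, which is equivalent to your nullspace intersection identity), and then imports the known Gaussian simplex weak and strong thresholds. The only cosmetic difference is that the paper packages the correspondence as the deterministic identity $f_k(\tilde{A}\po)=f_{k-1}(AT^{N-1})$ rather than arguing through per-face survival probabilities, but the underlying argument is the same.
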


Note particularly the {\it mixed} form of this relationship.
Although the conclusions concern the behavior of faces of
the randomly-projected {\it orthant}, the thresholds are those
that were previously obtained for the randomly-projected {\it simplex}.

Since there is such a dramatic difference between
$\rho(\delta,T^{N-1})$ and $\rho(\delta,\po)$, the single
row of ones can fairly be said to have a huge effect.
In particular, the region 'below' the simplex weak phase transition 
$\rho_W(\delta,T^{N-1})$ comprises $\approx 0.5634$ of the 
$(\delta,\rho)$ parameter area, and the hypercube weak phase 
transition $\rho_W(\delta,\h)$ comprises $1-\log 2\approx 0.3069$.

\section{Application: Compressed Sensing}\label{sec:cs}

Our face counting results can all be reinterpreted
as statements about  ``simple'' solutions of underdetermined
systems of linear equations. This reinterpretation allows
us to make connections with numerous problems
of current interest in signal processing, information theory,
and probability. The reinterpretation follows from the two 
following lemmas, which are restatements of 
Lemmas \ref{lem:face_transverse_po} and \ref{lem:face_transverse_h}, 
rephrasing the notion of (Transverse($A,x_0,Q$))
with the all but linguistically equivalent (Unique($A,x_0,Q$)).
For proofs of Lemmas \ref{lem:pos_unique} and \ref{lem:hyp_unique} 
see the proofs of 
Lemmas \ref{lem:face_transverse_po} and \ref{lem:face_transverse_h}.

\begin{lemma}\label{lem:pos_unique}
Let $x_0$ be a vector in $\RR_+^N$ with exactly $k$ nonzeros.  
Let $F$ denote the associated $k$-face of $\po$.  For an 
$n\times N$ matrix $A$, let $AF$ denote the image of $F$ under $A$
and $b_0 = Ax_0$ the image of $x_0$ under $A$.
The following are equivalent:

\begin{tabular}{ll}
$\mbox{(Survive($A,F,\po$))}$: & $AF$ is a $k$-face of $A\po$, \\
$\mbox{(Unique($A,x_0,\po$))}$: & The system $b_0=Ax$ has a unique solution in $\po$. 
\end{tabular}
\end{lemma}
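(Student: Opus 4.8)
The plan is to prove Lemma \ref{lem:pos_unique} by reducing it to the already-established equivalence between (Survive($A,F,\po$)) and (Transverse($A,x_0,\po$)) in Lemma \ref{lem:face_transverse_po}, and then showing that (Transverse($A,x_0,\po$)) and (Unique($A,x_0,\po$)) are, as the excerpt says, ``all but linguistically equivalent.'' So the only real content is the second equivalence, and I would prove it directly from the definitions of the feasibility cone and of the nullspace.

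First I would set up notation: write $x_0 \in \po$ with exactly $k$ nonzeros, and recall $\mbox{Feas}_{x_0}(\po) = \{ v : v(i) \ge 0 \text{ for every } i \text{ with } x_0(i) = 0 \}$, since moving a coordinate that is currently zero can only be done in the positive direction, while coordinates where $x_0(i) > 0$ can move either way for small $t$. Then I would argue both directions. For the contrapositive of one direction: suppose (Unique($A,x_0,\po$)) fails, so there is $x_1 \in \po$, $x_1 \ne x_0$, with $Ax_1 = Ax_0 = b_0$; set $w = x_1 - x_0 \ne 0$. Then $Aw = 0$, so $w \in \na$; and for every $i$ with $x_0(i) = 0$ we have $w(i) = x_1(i) - 0 = x_1(i) \ge 0$, so $w \in \mbox{Feas}_{x_0}(\po)$. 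Hence $\na \cap \mbox{Feas}_{x_0}(\po) \ne \{0\}$, i.e. (Transverse) fails. Conversely, if (Transverse) fails there is $w \ne 0$ in $\na \cap \mbox{Feas}_{x_0}(\po)$; then for $t > 0$ small, $x_t := x_0 + t w$ satisfies $x_t(i) \ge 0$ for all $i$ (nonnegative for free on the support of $x_0$ by smallness of $t$, nonnegative off the support because $w(i) \ge 0$ there), so $x_t \in \po$, $x_t \ne x_0$, and $Ax_t = Ax_0 + t\,Aw = b_0$; hence (Unique) fails. Combining with Lemma \ref{lem:face_transverse_po} gives the stated three-way equivalence (only two of which are listed).

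The main obstacle is not really an obstacle so much as a bookkeeping point: one must be careful that ``unique solution in $\po$'' is interpreted as uniqueness of the nonnegative solution $x$ of $b_0 = Ax$ (not uniqueness in all of $\RR^N$), and that the feasibility cone is computed at the specific $x_0$ with its specific sign pattern — the equivalence would fail if $x_0$ were on a lower-dimensional face than claimed. Since the hypothesis pins down that $x_0$ has exactly $k$ nonzeros and $F$ is the associated $k$-face, this is automatic. I would also note explicitly that the argument uses only elementary linear algebra and convexity and in particular does not need any randomness or general-position hypothesis on $A$; the probability statement in part (a) of the abstract then follows by combining this equivalence with the exact formula of Theorem \ref{thm:isom} (equivalently Lemma \ref{lem:trans_wendel}). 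This is exactly the reason the excerpt can say the proof of Lemma \ref{lem:pos_unique} ``is'' the proof of Lemma \ref{lem:face_transverse_po}: the passage from (Transverse) to (Unique) is a one-line translation, so no separate work is required beyond what I have sketched.
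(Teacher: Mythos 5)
Your proposal is correct and follows the paper's own route: the paper proves Lemma \ref{lem:pos_unique} simply by pointing to the proof of Lemma \ref{lem:face_transverse_po}, treating (Transverse($A,x_0,\po$)) and (Unique($A,x_0,\po$)) as ``all but linguistically equivalent,'' which is precisely the short nullspace/feasible-cone translation you spell out. Making that translation explicit (and noting it needs no randomness or general position) is a harmless elaboration, not a different argument.
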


\begin{lemma}\label{lem:hyp_unique}
Let $x_0$ be a vector in $\h$ with exactly $k$ entries 
strictly between the bounds $\{0, 1\}$.
Let $F$ denote the associated $k$-face of $\h$.  For an 
$n\times N$ matrix $A$, let $AF$ denote the image of $F$ under $A$
and $b_0 = Ax_0$ the image of $x_0$ under $A$.
The following are equivalent:

\begin{tabular}{ll}
$\mbox{(Survive($A,F,\h$))}$: & $AF$ is a $k$-face of $A\h$, \\
$\mbox{(Unique($A,x_0,\h$))}$: & The system $b_0=Ax$ has a unique solution in $ \h$. 
\end{tabular}
\end{lemma}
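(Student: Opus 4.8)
\textbf{Proof proposal for Lemma \ref{lem:hyp_unique}.}
The plan is to reduce the claim to the already-established Lemma \ref{lem:face_transverse_h}, which gives the equivalence of $\mbox{(Survive($A,F,\h$))}$ with the transversality condition $\mbox{(Transverse($A,x_0,\h$))}$: ${\cal N}(A)\cap\mbox{Feas}_{x_0}(\h)=\{0\}$. It therefore suffices to prove the purely formal equivalence of $\mbox{(Transverse($A,x_0,\h$))}$ with $\mbox{(Unique($A,x_0,\h$))}$, after which the stated equivalence follows by transitivity. This is precisely the ``all but linguistically equivalent'' rephrasing promised in the text.

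First I would record the elementary structure of the solution set. Since $x_0\in\h$ and $Ax_0=b_0$, the set $S:=\{x\in\h:\ Ax=b_0\}$ is a nonempty convex set containing $x_0$ (it is the intersection of the affine fibre $A^{-1}(b_0)$ with the polytope $\h$). The key observation is that a convex set containing a point $p$ reduces to $\{p\}$ exactly when it admits no nonzero feasible direction at $p$: if $q\in S$ with $q\neq p$ then $q-p$ is a feasible direction by convexity, and conversely any nonzero feasible direction $v$ produces points $p+tv\in S\setminus\{p\}$ for all small $t>0$. Thus $\mbox{(Unique($A,x_0,\h$))}$ holds iff $\mbox{Feas}_{x_0}(S)=\{0\}$.

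Next I would identify $\mbox{Feas}_{x_0}(S)$ with ${\cal N}(A)\cap\mbox{Feas}_{x_0}(\h)$. A direction $v$ satisfies $x_0+tv\in S$ for all sufficiently small $t>0$ iff both $A(x_0+tv)=b_0$, i.e. $Av=0$, and $x_0+tv\in\h$, i.e. $v\in\mbox{Feas}_{x_0}(\h)$. Combining the last two steps, $S=\{x_0\}$ iff ${\cal N}(A)\cap\mbox{Feas}_{x_0}(\h)=\{0\}$, which is exactly the desired equivalence of $\mbox{(Unique($A,x_0,\h$))}$ with $\mbox{(Transverse($A,x_0,\h$))}$; chaining with Lemma \ref{lem:face_transverse_h} completes the proof.

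I do not expect a genuine obstacle here: all the combinatorial-geometric content sits in Lemma \ref{lem:face_transverse_h}, which we may invoke, and the remaining argument is a direct translation. The only points requiring a little care are the use of convexity of $\h$ when passing from a second solution $x_1$ to the feasible direction $x_1-x_0$ (one needs the whole segment, not just a neighborhood of $x_0$, but convexity supplies it), and strict adherence to the ``for all sufficiently small $t>0$'' definition of feasible direction rather than demanding that an entire segment lie in $S$. The identical argument with $\h$ replaced by $\po$ proves Lemma \ref{lem:pos_unique}; unboundedness of $\po$ plays no role, since only convexity and the feasible-direction computation are used.
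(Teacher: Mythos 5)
Your proposal is correct and follows essentially the route the paper intends: the paper treats $\mbox{(Unique($A,x_0,\h$))}$ as an ``all but linguistically equivalent'' rephrasing of $\mbox{(Transverse($A,x_0,\h$))}$ and simply refers back to the proof of Lemma \ref{lem:face_transverse_h}, which is exactly your reduction. Your explicit convexity argument for the equivalence $\mbox{(Transverse)}\Leftrightarrow\mbox{(Unique)}$ (second solution $\Rightarrow$ feasible null direction, and conversely) just spells out the step the paper leaves implicit, and it is sound.
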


Note that the systems of linear equations referred
to in these lemmas are underdetermined: $n < N$.
Hence these lemmas identify 
conditions on underdetermined system of
linear equations, such that,  when  the
solution is known to obey certain constraints,  there are many
cases where this seemingly weak a priori knowledge 
in fact uniquely determines
the solution.  The first result can be paraphrased as
saying that nonnegativity constraints can be very powerful,
if the object is known to have relatively few nonzeros;
the second result says that upper and lower bounds
can be very powerful, provided those bounds are active
in most cases.

These results provide a theoretical vantage point on
an area of recent intense interest in signal processing,
appearing variously 
under the labels ``Compressed Sensing'' or ``Compressive Sampling''.

In many practical applications of scientific and
engineering signal processing -- spectroscopy is one example --
one can obtain $n$ linear measurements
of an object $x$, obtaining data
$b = Ax$; here the rows of the matrix $A$ give the
linear response functions of the measurement devices.  
We wish to reconstruct  $x$, knowing only the measurements $b$, 
the measurement matrix $A$, 
and various {\t a priori} constraints on $x$.

It could be very useful to be able
to do this in the case $n < N$, allowing us to save measurement time
or other resources.   
This seems hopeless, because the linear system is underdetermined;
but the above lemmas show that there is some fundamental
soundness to the idea that we can have $n < N$ and still
reconstruct.  We now spell out the consequences of these lemmas
in more detail.

\subsection{Reconstruction Exploiting Nonnegativity Constraints}
Many practical applications, such as spectroscopy
and astronomy, the object $x$ to be recovered
is known {\it a priori} to be nonnegative.
We wish to reconstruct  the unknown $x$, knowing only the 
linear measurements $b=Ax$, the matrix $A$, and
the constraint $x \in \po$.  

Let $J(x)$ be some function of $x$. Consider the positivity-constrained
variational problem
\[
   (Pos_{J}) \qquad \min J(x) \qquad \mbox{ subject to } b = Ax, \qquad x \in \po.    
\]
Let $pos_J(b,A)$ denote any solution of the problem instance $(Pos_J)$
defined by data $b$ and matrix $A$.

Typical variational functions $J$ include
\bitem
 \item Sparsity: $\|x\|_{\ell^0}:=  \#\{i: x > 0 \}$.
 \item Size:  $1'x$.
 \item negEntropy: $\sum x(j) \log(x(j))$
 \item Energy: $ \sum x(j)^2$
 \eitem
This framework contains as special cases the popular signal processing
 methods of maximum entropy reconstruction
 and nonnegative least-squares reconstruction.
  
We conclude the following:
\begin{corollary}
Suppose that
\[
    f_k(A \po) = f_k(\po) .
\]
Let $x_0 \geq 0$ and $\|x_0\|_{\ell^0} \leq k$. For the
problem instance defined by $b = Ax_0$
\[
     pos_J(b,A) = x_0 .
\]
\end{corollary}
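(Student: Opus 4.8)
The plan is to deduce this corollary directly from Lemma~\ref{lem:pos_unique}. That lemma asserts, for a fixed $k$-face $F$ of $\po$ associated to a point $x_0 \in \po$ with exactly $k$ nonzeros, that the survival event $\mbox{(Survive($A,F,\po$))}$ — namely $AF$ being a $k$-face of $A\po$ — is equivalent to the uniqueness event $\mbox{(Unique($A,x_0,\po$))}$, i.e.\ that $b_0 = Ax_0$ has a unique solution in $\po$. So the work is almost entirely bookkeeping: translate the hypothesis $f_k(A\po) = f_k(\po)$ into the survival of the one particular face we care about, then invoke the lemma, then observe that the unique solution of $b = Ax$, $x \in \po$ is the only feasible point of $(Pos_J)$ and hence equals $pos_J(b,A)$ regardless of the choice of $J$.

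First I would handle the case where $x_0$ has exactly $k$ nonzeros. Since every $k$-face of $\po$ is the image of a face, and faces of $\po$ are in bijection with the $k$-subsets of coordinates allowed to be nonzero, the count $f_k(A\po)$ is a sum over all $\binom{N}{k}$ faces $F$ of the indicator that $AF$ survives as a $k$-face. The inequality $f_k(A\po) \le f_k(\po)$ always holds (each face maps to at most a $k$-face), so equality $f_k(A\po) = f_k(\po)$ forces every term to be $1$: \emph{every} $k$-face survives. In particular the face $F$ naturally associated to the support of $x_0$ survives, so $\mbox{(Survive($A,F,\po$))}$ holds, and Lemma~\ref{lem:pos_unique} gives that $b_0 = Ax_0$ has a unique nonnegative solution. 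That solution is $x_0$ itself (it is feasible). Now $(Pos_J)$ minimizes $J$ over the feasible set $\{x : Ax = b_0,\ x \in \po\}$, which is the singleton $\{x_0\}$; hence $pos_J(b_0,A) = x_0$ irrespective of $J$.

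Second, I would dispose of the degenerate case $\|x_0\|_{\ell^0} = k' < k$: here $x_0$ lies in a $k'$-face, and the hypothesis $f_k(A\po) = f_k(\po)$ should be leveraged to conclude that all lower-dimensional faces also survive. The cleanest route is to note that a face of dimension $k' < k$ is contained in some $k$-face $F$; if $AF$ is a genuine $k$-face of $A\po$ then $A$ restricted to the relevant coordinate subspace is injective, which forces the sub-face to survive as well. Alternatively — and perhaps more transparently — one can observe that $\mbox{(Transverse($A,x_0,\po$))}$, the condition ${\cal N}(A) \cap \mbox{Feas}_{x_0}(\po) = \{0\}$, is monotone: enlarging the support of $x_0$ shrinks $\mbox{Feas}_{x_0}(\po)$, so transversality for a $k$-nonzero point implies it for any point whose support is a subset. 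Thus survival of all $k$-faces implies survival of all $k'$-faces with $k' \le k$, and the argument of the previous paragraph applies verbatim.

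The main obstacle, such as it is, is purely expository: one must be careful that the equivalence in Lemma~\ref{lem:pos_unique} is stated for a point with \emph{exactly} $k$ nonzeros, whereas the corollary allows $\|x_0\|_{\ell^0} \le k$, so the monotonicity step above (or the face-containment step) is genuinely needed and should not be glossed over. There is no analytic or probabilistic content here — the corollary is a deterministic consequence of the face-count hypothesis — so once the combinatorial translation from ``$f_k(A\po) = f_k(\po)$'' to ``the face of $x_0$ survives'' is made precise, the proof is complete.
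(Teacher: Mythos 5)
Your proposal is correct and takes essentially the same route as the paper, which in fact states this corollary without a separate proof, treating it as immediate from Lemma \ref{lem:pos_unique}: equality of face counts forces every $k$-face of $\po$ to survive, the lemma turns survival of the face of $x_0$ into uniqueness of the nonnegative solution of $Ax=b$, and uniqueness makes the feasible set of $(Pos_J)$ the singleton $\{x_0\}$, so $pos_J(b,A)=x_0$ for any $J$; your extra care with the case $\|x_0\|_{\ell^0}<k$ is a point the paper glosses over. One small slip in your monotonicity remark: since $\mbox{Feas}_{x_0}(\po)=\{v: v_i\ge 0 \mbox{ for } i\notin\mbox{supp}(x_0)\}$, enlarging the support \emph{enlarges} (not shrinks) the feasible cone; this is in fact exactly the direction you need, because transversality of ${\cal N}(A)$ with the larger cone at a $k$-nonzero point then implies transversality at any point whose support is a subset, so your conclusion stands as written.
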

In words: under the given conditions on the face numbers,
{\it any} variational prescription which imposes nonnegativity constraints
will correctly recover the $k$-sparse solution in
{\it any} problem instance where such a $k$-sparse solution exists.
This may seem surprising;  as $n < N$, the system of linear
equations is underdetermined yet we correctly find a sparse
solution if it exists.

Corresponding to this `strong' statement is a `weak' statement.
Consider the following probability measure on $k$-sparse problem
instances.
\bitem
 \item Choose a random subset $I$ of size $k$ from $\{1,\dots,N\}$,
  by $k$ simple random draws without replacement.
  \item Set the entries of $x_0$ not in the selected subset to zero.
  \item Choose the entries of $x_0$ in the selected set $I$
   from some fixed joint distribution $\psi_{I}$ supported in $(0,1)^k$.
   \item Generate the problem instance $b = Ax_0$.
 \eitem
We speak of drawing a $k$-sparse random problem instance at random.
\begin{corollary}
Suppose that for some $\eps \in (0,1)$.
\[
    f_k(A \po) \geq (1- \eps) \cdot  f_k(\po) .
\]
For $(b,A)$ a problem instance drawn at random,
as above:
\[
 \mbox{Prob} \{ pos_J(b,A) = x_0 \}  \geq (1-\eps).
 \]
\end{corollary}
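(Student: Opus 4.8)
The plan is to push the deterministic face-count hypothesis through the survival/uniqueness dictionary of Lemmas \ref{lem:face_transverse_po} and \ref{lem:pos_unique}, exactly as in the proof of the preceding `strong' corollary, and then average over the random support set. Throughout, $A$ is the fixed matrix satisfying $f_k(A\po)\ge(1-\eps)f_k(\po)$; only $x_0$, and hence $b=Ax_0$, is random.

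First I would record the deterministic meaning of the hypothesis. Specializing (\ref{eq:aveFace}) to a non-random $A$ (so that expectation over $A$ is vacuous) gives $f_k(A\po)/f_k(\po)=\mbox{Ave}_F\,\mathbf{1}\{AF\mbox{ is a }k\mbox{-face of }A\po\}$; equivalently, $f_k(A\po)$ is exactly the number of coordinate $k$-faces $F$ of $\po$ that survive the projection. (Underlying this is that $F\mapsto AF$ is a bijection from surviving $k$-faces of $\po$ onto $k$-faces of $A\po$: on a surviving face $A$ is affinely injective, so distinct surviving faces cannot collapse, and every $k$-face of $A\po$ is the image of exactly one such face.) Thus the hypothesis says that at least a $(1-\eps)$-fraction of the ${N\choose k}$ coordinate $k$-faces survive.

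Next I would unwind the random construction. Drawing $I$ by $k$ draws without replacement produces a uniformly random $k$-subset of $\{1,\dots,N\}$; since the entries of $x_0$ are drawn from $\psi_I$ supported in $(0,1)^k$, all $k$ of them are strictly positive, so $x_0$ has exactly $k$ nonzeros and lies in the relative interior of the coordinate face $F_I$ of $\po$. The key observation is that $\mbox{Feas}_{x_0}(\po)=\{v:\ v_j\ge 0\mbox{ for }j\notin I\}$ depends only on $I$, not on the particular interior point $x_0$; hence by Lemma \ref{lem:face_transverse_po} the event $(\mbox{Survive}(A,F_I,\po))$ is a deterministic function of $I$ alone. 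When it holds, Lemma \ref{lem:pos_unique} gives that $b=Ax$, $x\in\po$ has the single feasible point $x_0$, so the feasible set of $(Pos_J)$ is $\{x_0\}$ and therefore $pos_J(b,A)=x_0$ for every admissible $J$. Conditioning on $I$ and then averaging over its uniform distribution,
\[
\mbox{Prob}\{\, pos_J(b,A)=x_0 \,\}\;\ge\;\mbox{Prob}\{\, (\mbox{Survive}(A,F_I,\po))\ \mbox{holds} \,\}\;=\;\frac{\#\{\, \mbox{surviving coordinate $k$-faces of }\po \,\}}{{N\choose k}}\;=\;\frac{f_k(A\po)}{f_k(\po)}\;\ge\;1-\eps ,
\]
which is the claim.

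I do not expect a serious obstacle: the argument is essentially bookkeeping once Lemmas \ref{lem:face_transverse_po} and \ref{lem:pos_unique} are in hand. The one step that deserves a moment's care is the identification $f_k(A\po)=\#\{\mbox{surviving coordinate }k\mbox{-faces}\}$ — in particular the no-overcounting claim that two distinct surviving faces cannot map to the same face of $A\po$ — which follows from affine injectivity of $A$ on a surviving face and is already implicit in the derivation of (\ref{eq:aveFace}); spelling it out is the only mildly technical point.
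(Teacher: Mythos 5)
Your argument is correct and is essentially the paper's intended one: the paper treats this corollary as an immediate consequence of Lemma \ref{lem:pos_unique} together with the interpretation (via (\ref{eq:aveFace})) of $f_k(A\po)/f_k(\po)$ as the fraction of coordinate $k$-faces that survive, which is exactly how you argue, with the uniform choice of the support $I$ supplying the averaging. Your extra care about identifying $f_k(A\po)$ with the number of surviving faces (only the surjectivity direction is actually needed for the inequality) is a reasonable point to flag but does not change the route.
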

In words: under the given conditions on the face
lattice, {\it any} variational prescription which imposes nonnegativity constraints
will correctly succeed to recover the $k$-sparse solution
in at least a fraction $(1-\eps)$ of all
$k$-sparse problem instances.
This may seem surprising; since $n < N$, the system of linear
equations is underdetermined, and yet, we typically find a sparse
solution if it exists.

Here are some simple applications:

\bitem
\item In the proportional-dimensional framework, consider triples $(k_n,n,N_n)$
with parameters $\delta, \rho$. Let $A$ denote an $n$ by $N_n$  matrix 
having random nullspace which is orthant symmetric and generic.
\bitem
\item  If the parameters $\delta, \rho$ name a point  'below' 
the {\it orthant weak threshold} $\rho_W(\delta;\po)$, 
then for the vast majority of $k_n$-sparse vectors,
any variational method will correctly recover the vector.
\item If the parameters $\delta, \rho$ name a point  'below' the 
{\it  orthant strong threshold} $\rho_S(\delta;\po)$, 
then for large enough $n$, every $k_n$-sparse vector
can be correctly recovered by any variational method imposing positivity
constraints.
\eitem
\item In the proportional-dimensional asymptotic, consider triples $(k_n,n,N_n)$
with parameters $\delta, \rho$. Let $A_0$ denote an $n-1$ by $N_n$  matrix 
having iid standard normal entries. And let $A$ denote the $n$ by $N_n$ matrix
formed by adjoining a row of ones to $A_0$.
\bitem 
\item
If the parameters $\delta, \rho$ name a point  'below' the 
{\it simplex weak threshold} $\rho_W(\delta;T^{N-1})$, 
then for the vast majority of $k_n$-sparse vectors,
any variational method will correctly recover the sparse vector.
\item If the parameters $\delta, \rho$ name a point  'below' 
the {\it simplex strong threshold} $\rho_S(\delta,T^{N-1})$, 
then for large enough $n$, every $k_n$-sparse vector
can be correctly recovered by any variational method imposing positivity
constraints.
\eitem
\item Let $A$ denote the $n$ by $N$ partial Fourier matrix
built from low frequencies and 
called $\Omega$ in Section \ref{sec:PartFourier}.
Every $\lfloor n/2 \rfloor$-sparse vector
will be correctly recovered by any variational method imposing positivity
constraints.
\eitem

Hence in positivity-constrained reconstruction problems
where the object to be recovered is zero in most entries --
an assumption which approximates the truth in many problems
of spectroscopy and astronomical imaging  \cite{DJHS92},
we can work with fewer than $N$ samples. 
The above paragraphs show that it matters a great
deal what matrix $A$ we use. Our preference order:
\begin{quotation}
$\Omega$ is better than  the random matrix, 
$\tilde{A}$ is better than a random zero-mean matrix $A$.
\end{quotation}

(These results extend and generalize results which were previously obtained
by the authors in \cite{DoTa05_signal}, in the case where $J(x) = 1'x$, and
by the first author and coauthors in \cite{DJHS92}; 
see also Fuchs \cite{Fuchs} and Bruckstein, Elad, and Ziubulevsky 
\cite{Elad_nonnegative}.)

\subsection{Reconstruction Exploiting Box Constraints}

Consider again the problem of reconstruction from
measurements $b = Ax$, but this time assuming the object $x$
obeys box-constraints: $0 \leq x(j) \leq 1$, $1 \leq j \leq N$.
Such constraints can arise for example in infrared absorption
spectroscopy and in binary digital communications.

We define the box-constrained variational problem
\[
   (Box_{J}) \qquad \min J(x) \qquad \mbox{ subject to } b = Ax, \qquad  0 \leq x(j) \leq 1, \quad j=1,\dots, N.    
\]
Let $box_J(b,A)$ denote any solution of the problem instance $(Box_J)$
defined by data $b$ and matrix $A$.

In this setting, the notion corresponding to 'sparse' is 'simple'.
We say that a vector $x$ is {\it $k$-simple}
 if at most $k$ of its entries differ from the bounds
 $\{0,1\}$.
Here, the interesting functions $J$ 
penalize deviations from simple structure; they include:
\bitem
 \item Simplicity: $  \#\{i: x(i) \not\in \{0,1\}  \}$.
  \item Violation Energy: $\sum x(j)  (1-x(j))$
 \eitem

\begin{corollary}
Suppose that
\[
    f_k(A \h) = f_k(\h) .
\]
Let $x_0$ be a  $k$-simple vector obeying the box
constraints $0 \leq x_0  \leq 1$.
For the problem instance defined by $b = Ax_0$,
\[
    box_J(b,A) = x_0 .
\]
\end{corollary}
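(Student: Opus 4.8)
The plan is to reduce the statement to the uniqueness reformulation Lemma~\ref{lem:hyp_unique}, exactly as the positivity-constrained corollaries were reduced to Lemma~\ref{lem:pos_unique}. The one genuinely new ingredient is turning the single \emph{global} count equality $f_k(A\h)=f_k(\h)$ into a statement about the particular face of $\h$ in whose relative interior $x_0$ sits. First I would record: since $f_k(A\h)$ equals the number of $k$-faces $F$ of $\h$ for which $\mbox{(Survive($A,F,\h$))}$ holds (this is the deterministic identity underlying (\ref{eq:ave_h})), and this count is obviously at most $f_k(\h)$, the hypothesis $f_k(A\h)=f_k(\h)$ forces $\mbox{(Survive($A,F,\h$))}$ to hold for \emph{every} $k$-face $F$ of $\h$.

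Next I would propagate survival downward to all faces of dimension at most $k$. If $F$ is a $k$-face of $\h$ and $AF$ is a $k$-face of $A\h$, then $A$ maps the $k$-dimensional affine set $\mathrm{aff}(F)$ onto the $k$-dimensional set $\mathrm{aff}(AF)$, so $A$ is injective on $\mathrm{aff}(F)$; consequently, for any subface $F'\subseteq F$ the image $AF'$ is a face of $AF$, hence a face of $A\h$, of the same dimension as $F'$. Since every $j$-face of $\h$ with $j\le k$ is a subface of some $k$-face of $\h$ (enlarge the set of free coordinates to size $k$, keeping the remaining coordinates fixed at their $0$/$1$ values), it follows that $\mbox{(Survive($A,F',\h$))}$ holds for every face $F'$ of $\h$ of dimension $\le k$.

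Now take $x_0$ to be $k$-simple in $\h$, say with exactly $k'\le k$ coordinates strictly inside $(0,1)$. Then $x_0$ lies in the relative interior of a uniquely determined face $F'$ of $\h$ of dimension $k'$, and by the previous paragraph $\mbox{(Survive($A,F',\h$))}$ holds. Applying Lemma~\ref{lem:hyp_unique} (with its ``$k$'' taken to be $k'$) converts this into $\mbox{(Unique($A,x_0,\h$))}$: the system $b_0=Ax$ has a unique solution in $\h$, where $b_0=Ax_0$. Finally, by definition $box_J(b,A)$ is some minimizer of $J$ over $\{x: Ax=b,\ x\in\h\}$; in particular $box_J(b,A)\in\h$ and $A\,box_J(b,A)=b=Ax_0$, so uniqueness forces $box_J(b,A)=x_0$. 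Note that $J$ is irrelevant here: the feasible set of $(Box_J)$ is the singleton $\{x_0\}$, so any selection rule returns $x_0$.

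The only step carrying real content is the heredity argument — that the subfaces of a surviving $k$-face survive, together with the elementary observation that every face of $\h$ of dimension at most $k$ is contained in a $k$-face of $\h$. This is what bridges the gap between the ``$k$-simple'' hypothesis (\emph{at most} $k$ free coordinates) and the ``exactly $k$ free coordinates'' normalization in Lemma~\ref{lem:hyp_unique}; everything else is immediate bookkeeping, mirroring the positivity case verbatim.
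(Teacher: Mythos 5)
Your argument is correct and follows essentially the route the paper intends: the count equality forces (Survive($A,F,\h$)) for every $k$-face, Lemma~\ref{lem:hyp_unique} converts survival into uniqueness of the solution of $Ax=b$ in $\h$, and then $box_J(b,A)=x_0$ because the feasible set of $(Box_J)$ is the singleton $\{x_0\}$, irrespective of $J$. Your heredity step (subfaces of a surviving face survive, equivalently $\mbox{Feas}_{x_0'}(\h)\subseteq\mbox{Feas}_{x_0}(\h)$ for a point of a subface, so Transverse is inherited), which bridges the ``at most $k$'' in $k$-simple with the ``exactly $k$'' normalization of Lemma~\ref{lem:hyp_unique}, is exactly the detail the paper leaves implicit, and you supply it correctly.
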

In words: under the given conditions on the face lattice,
{\it any} variational prescription which imposes box constraints,
when presented with a problem instance where there is a $k$-simple solution,
will correctly recover the $k$-simple solution.

Corresponding to this `strong' statement is a `weak' statement.
Consider the following probability measure on problem instances having
 $k$-simple solutions.  
Recall that $k$-simple vectors  have all entries equal to $0$ or $1$
except at $k$ exceptional locations.
\bitem
\item Choose the subset $I$ of $k$ exceptional entries uniformly
at random from the set $\{1,\dots, N\}$ without replacement;
\item Choose the nonexceptional
entries to be either $0$ or $1$ based on tossing a fair coin.
\item Choose 
 the values of the exceptional $k$ entries 
 according to a joint probability measure $\psi_I$ supported in $(0,1)^k$.
 \item Define the problem instance $b = Ax_0$.
\eitem

\begin{corollary}
Suppose that for some $\eps \in (0,1)$.
\[
    f_k(A \h) \geq (1- \eps) \cdot  f_k(\h) .
\]
Randomly sample a problem instance $(b,A)$
using the method just described.
\[
P \{  box_J(b,A) = x_0 \} \ge (1-\eps) .
\]
\end{corollary}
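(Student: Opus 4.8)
The plan is to reduce the `weak' statement about box-constrained reconstruction to the corresponding face-counting fact already in hand, exactly as was done for the positive orthant. Starting from Lemma \ref{lem:hyp_unique}, recovery of $x_0$ by any box-constrained variational method $box_J$ succeeds precisely when the system $b_0 = Ax$ has $x_0$ as its unique solution inside $\h$, which in turn is equivalent to (Survive($A,F,\h$)), i.e. the $k$-face $F$ of $\h$ associated with the exceptional set $I$ of $x_0$ maps to a genuine $k$-face $AF$ of $A\h$. So the event $\{box_J(b,A)=x_0\}$ coincides with $\{AF \text{ is a $k$-face of } A\h\}$, and the task is to lower-bound the probability of this event under the stated sampling of the problem instance.

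First I would fix the randomness picture: here $A$ is deterministic (only $x_0$, or equivalently the face $F$, is drawn at random), so one should read $f_k(A\h)\ge(1-\eps)f_k(\h)$ as a statement about the fixed matrix $A$, and the randomness comes from choosing $I$ and the sign pattern of the non-exceptional coordinates. Each realization of $(I,\text{signs})$ picks out one $k$-face of $\h$: there are ${N\choose k}$ choices of the exceptional set and $2^{N-k}$ choices of the bit pattern on the remaining coordinates, for a total of ${N\choose k}2^{N-k}=f_k(\h)$ faces, and the sampling scheme puts uniform mass on exactly these $f_k(\h)$ faces. (The continuous draw $\psi_I$ of the exceptional values is irrelevant: any point in the relative interior of a given $k$-face determines the same face $F$, and $\psi_I$ is supported in $(0,1)^k$, so $x_0$ always lands in that relative interior.) Hence
\begin{equation}
P\{box_J(b,A)=x_0\} = \frac{\#\{F : AF \text{ is a $k$-face of } A\h\}}{f_k(\h)}.
\end{equation}
The numerator is by definition $f_k(A\h)$ (distinct faces of $\h$ can only map to distinct faces of $A\h$ when they survive, which is part of the content of Lemma \ref{lem:face_transverse_h} and is implicit in the counting identity used throughout), so the ratio is $f_k(A\h)/f_k(\h)\ge 1-\eps$ by hypothesis, giving the claim.

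The only genuinely delicate point is the bookkeeping in the middle step: one must check that the uniform measure over the $f_k(\h)$ faces induced by the $(I,\text{signs})$ draw is the right thing, and that ``$AF$ is a $k$-face'' is the event appearing in the numerator of $f_k(A\h)/f_k(\h)$ with no double-counting — i.e. that the map from surviving faces of $\h$ to faces of $A\h$ is injective on the surviving set. This is exactly the observation underlying equation (\ref{eq:ave_h}): $\mathcal{E}f_k(A\h)/f_k(\h) = \mbox{Ave}_F[\mbox{Prob}\{AF\text{ is a $k$-face of }A\h\}]$, and with $A$ deterministic the inner probability is just the indicator, so the average over $F$ is literally $f_k(A\h)/f_k(\h)$. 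Everything else is the verbatim translation through Lemma \ref{lem:hyp_unique}; the `strong' corollary just above is the special case $\eps=0$, and the argument here is its probabilistic softening, mirroring the orthant case one-for-one.
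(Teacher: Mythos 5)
Your argument is correct and is exactly the route the paper intends: the corollary is left as an immediate consequence of Lemma \ref{lem:hyp_unique} together with the face-count identity (\ref{eq:ave_h}), read with $A$ fixed so that the average over the $f_k(\h)={N\choose k}2^{N-k}$ faces (which your sampling scheme hits uniformly) reduces to the fraction of surviving faces, $f_k(A\h)/f_k(\h)\ge 1-\eps$. Your side remarks — that $\psi_I$ being supported in $(0,1)^k$ keeps $x_0$ in the relative interior of its face, and that uniqueness of the feasible point forces any $box_J$ to return $x_0$ — are precisely the bookkeeping the paper leaves implicit.
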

In words: under the given conditions on the face
lattice, {\it any} variational prescription which imposes box constraints
will correctly recover at least a fraction $(1-\eps)$ of all
underdetermined systems generated by the matrix $A$
which have  $k$-simple solutions.

Here is a simple application.
In the proportional-dimensional asymptotic framework, consider triples $(k_n,n,N_n)$
with parameters $\delta, \rho$. Let $A$ denote an $n$ by $N_n$  matrix 
having random nullspace which is orthant symmetric and generic.
If the parameters $\delta, \rho$ name a point  'below' the hypercube
weak threshold, then for the vast majority of $k_n$-simple vectors,
any variational method imposing box constraints
will correctly recover the vector.

In the hypercube case, to our knowledge, there is no phenomenon comparable
to that which arose in the positive orthant with the 
special constructions $\Omega$ and $\tilde{A}$.

Consequently, the hypercube weak threshold is the best known
general result on the ability to undersample by exploiting box constraints.
In particular, the difference between the weak simplex threshold and
the weak hypercube threshold has this interpretation:
\begin{quotation}
A given degree $k$ of sparsity of a nonnegative object is
much more powerful than that same degree simplicity of a
box-constrained object.
\end{quotation}
Specifically, {\it we shouldn't expect to be able to undersample
a typical box-constrained object by more than a factor of $2$}
and then reconstruct it using some garden-variety variational
prescription.  In comparison, the last section showed that we can severely
undersample very sparse nonnegative objects.

Because box constraints are of interest in important areas
of signal processing, it seems that much more attention
should be paid to thresholds associated with the hypercube.


\section{Additional Proofs}\label{sec:MoreProofs}

\subsection{Proof of Lemma \ref{lem:face_transverse_po}}

Let $b_0:=Ax_0$.

Assume (Survive($A,F,\RR^N_+$)), that $AF$ is a $k$-face of 
$A\RR^N_+$.  General position of $A$ implies that 
$AF$ is a simplicial cone of dimension $k-1$, and that there 
exists a unique $x\in\RR^N_+$ satisfying $Ax=b_0$, with $x_0$ being that 
solution.  We now assume $\exists\;\nu\in\na\cap Feas_{x_0}(\RR^N_+)\ne 0$.
Then $\exists\;\epsilon>0$ small enough such that
$z_0:=x_0+\epsilon\nu\in\RR^N_+$.  This $z_0$ satisfies $Az_0=b_0$, 
in contradiction to the uniqueness condition previously stated, 
therefor $\na\cap Feas_{x_0}(\RR^N_+)=\{0\}$.

For the converse direction, assume (Transverse($A,x_0,\RR^N_+$)), 
that $\na\cap Feas_{x_0}(\RR^N_+)=\{0\}$.  Assume $AF$ is not a 
$k$-face of $A\RR^N_+$, that is $AF$ is interior to $A\RR^N_+$.  
As $A$ projects the interior of $\RR^N_+$ to the complete interior 
of $A\RR^N_+$, $\exists\; z_0\in\RR^N_+$ with $z_0>0$ with
$Az_0=b_0$.  The difference $\nu:=z_0-x_0\ne 0$,  but 
$\nu\in\na\cap Feas_{x_0}(\RR^N_+)$ contradicting the Transverse 
assumption, implying $AF$ is a $k$-face of $A\RR^N_+$.

\qed

\subsection{Proof of Lemma \ref{lem:face_transverse_h}}

This proof follows similarly to that of Lemma
\ref{lem:face_transverse_po} and is omitted.

\subsection{Proof of Lemma \ref{lem:hyper_orth}}\label{sec:orth_proof}

For points $x_0$ on $k$-faces of $\h$ that are also $k$-faces of 
$\po$ they share the same feasible set
\[
Feas_{x_0}(\h)=Feas_{x_0}(\po)
\]
and by Lemmas \ref{lem:face_transverse_po} and 
\ref{lem:face_transverse_h} the probabilities 
of (Survive($A,F,Q$)) for $Q=\po,\h$ must be equal. 
Consider a point $x_0$ on a $k$-face of $\h$ that is not  a
$k$-face of $\po$; without loss of generality, due to 
column exchangeability, let
\[
x_0(i)=\left\{ \begin{array}{cl}
0 & i=1,\ldots,\ell \\
1 & i=\ell+1,\ldots,N-k \\
1/2 & i=N-k+1,\ldots N
\end{array}\right.
\]
Then Feas$_{x_0}(\h)=\{\nu: \nu_1,\ldots,\nu_{\ell}\ge 0, 
\nu_{\ell+1},\ldots,\nu_{N-k}\le 0\}$.  Following the 
proof of Lemma \ref{lem:trans_wendel}, condition
(Transverse($A,x_0,\h$)) can be restated as 
\begin{equation}\label{eq:ineq_h}
(\mbox{Ineq } \h)\quad\left\{
\begin{array}{l} 
\mbox{The only vector }c\mbox{ satisfying} \\
(B^Tc)_i\ge 0,\quad i=1,\ldots,\ell, \\
(B^Tc)_i\le 0,\quad i=\ell+1,\ldots,N-k, \\
\mbox{is the vector }c=0
\end{array}
\right.
\end{equation}
where $B$ is the orthogonal complement of $A$.

Orthant symmetry of $B$ states that the sign of 
$(B^Tc)_i$ is equiprobable; consequently, the probability of 
the event named (\ref{eq:ineq_h}) is independent of $\ell$, and is 
in fact equal to the probability of the event named in (\ref{eq:ineq}).

\qed

\subsection{Proof of Corollary \ref{cor:fourier}}

The result is a corollary of \cite[Theorem 3, pp. 56]{DJHS92}.
However, it may require effort on the part of readers
to see this, so we select the key step from the proof 
of Theorem 3, \cite[Lemma 2, pp. 63]{DJHS92}, 
and use it directly within the framework of this paper.
\newcommand{\cH}{{\cal H}}
\newcommand{\cL}{{\cal L}}

As $n$ is odd, write  $n = 2m+1$ where $m$ is an integer.
The range of the matrix $\Omega$ is the span of all
Fourier frequencies from 0 to $\pi (m-1) /N$.
In accord with terminology in electrical engineering,
this space of vectors with be called the space of Lowpass
sequences $\cL(m)$.
The nullspace of $\Omega$ is the span of all Fourier
frequencies from $\pi m/N$ to $\pi/N$. It will be called
the space of Highpass sequences $\cH(m)$.

We have the following:

\begin{lemma} \cite{DJHS92}
Every sequence in $\cH(m)$ has at least $m$ negative entries.
\end{lemma}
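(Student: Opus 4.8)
The plan is to exploit the defining orthogonality $\cH(m)\perp\cL(m)$ by pairing a nonzero highpass sequence against a nonnegative lowpass sequence built from it. Fix $w\in\cH(m)$ with $w\neq 0$. Since $w$ lies in the nullspace of $\Omega$, it is orthogonal to every row of $\Omega$: in particular it is orthogonal to the row of ones, so $\sum_j w(j)=0$, and orthogonal to every low-frequency cosine and sine, hence $\langle p,w\rangle=0$ for every sampled trigonometric polynomial $p$ of degree below the lowpass cutoff. Let $Z:=\{j:w(j)<0\}$ and $q:=\card Z$. I want to show $q\ge m$, so assume toward a contradiction that $q\le m-1$.

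First I would build a nonnegative trigonometric kernel that vanishes exactly at the grid points where $w$ is negative. Writing $\theta_j$ for the angle attached to index $j$, put
\[
P(\theta):=\prod_{j\in Z}\bigl(1-\cos(\theta-\theta_j)\bigr),
\]
read as the constant $1$ when $q=0$. Each factor is a nonnegative trigonometric polynomial of degree one, so $P$ is a nonnegative trigonometric polynomial of degree $q\le m-1$; sampled on the grid it therefore lies in $\cL(m)$. Since the grid angles are distinct, $P(\theta_j)=0$ precisely for $j\in Z$ and $P(\theta_j)>0$ otherwise. Consequently
\[
0=\langle P,w\rangle=\sum_{j}P(\theta_j)\,w(j)=\sum_{j\notin Z}P(\theta_j)\,w(j)\ge 0,
\]
a sum of nonnegative terms forced to vanish, so $w(j)=0$ for every $j\notin Z$, i.e. $w\le 0$ everywhere. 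But a nonzero sequence with $w\le 0$ has $\sum_j w(j)<0$, contradicting $\sum_j w(j)=0$. Hence $q\ge m$, which is the assertion.

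The step needing care — and the only real obstacle — is the claim that the sampled kernel $P$ genuinely belongs to $\cL(m)$: one must line up the frequency indexing of the rows in (\ref{eq:fourier}) with the degree of $P$ and check that sampling $P$ on the length-$N$ grid does not alias it out of the lowpass span. This is routine because $n<N$, and it is exactly where the value $m$ (rather than something larger or smaller) enters. An alternative that bypasses the explicit interpolation is a sign-change argument in the spirit of Descartes' rule: a nonzero element of $\cH(m)$ must change sign at least $2m$ times around the grid, for otherwise a lowpass polynomial of degree $\le m-1$ — a product of at most $2m-2$ elementary sine factors placed at the sign changes — would have the same sign pattern as $w$ and hence a strictly positive inner product with it, contradicting orthogonality; and a cyclic $\pm$ pattern with at least $2m$ sign changes contains at least $m$ negative runs, hence at least $m$ negative entries.
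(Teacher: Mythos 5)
Your proof is correct, and it supplies an argument for a statement the paper itself does not prove but only cites from \cite{DJHS92}; the mechanism you use (a nonnegative lowpass trigonometric kernel vanishing exactly on the set of negative entries, combined with orthogonality to the all-ones row to rule out $w\le 0$) is the standard one behind the cited lemma, so this is the expected route rather than a genuinely different one. The step you flag does close cleanly: writing $\theta_j=2\pi(j-1)/N$, the rows of $\Omega$ in (\ref{eq:fourier}) are precisely the sampled functions $\cos(k\theta_j)$ for $0\le k\le m$ (odd rows, frequencies $0,2,\dots,2m$ in the half-angle $\pi(j-1)/N$) and $\sin(k\theta_j)$ for $1\le k\le m$ (even rows), with $n=2m+1$. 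Hence your $P$, a trigonometric polynomial of degree $q\le m-1$ in $\theta$, sampled on the grid is literally a linear combination of rows of $\Omega$, so $\langle P,w\rangle=0$ for $w\in{\cal N}(\Omega)$; no aliasing question arises, since you only need membership in the row space, not a basis representation, and $n<N$ enters only through the distinctness of the $\theta_j$, which gives $P(\theta_j)>0$ off $Z$. Two minor remarks: the lemma should be read for nonzero sequences (the zero sequence lies in ${\cal H}(m)$ and trivially has no negative entries), which is exactly how you and the paper use it; and your argument in fact tolerates $q\le m$, because degree-$m$ sinusoids are still rows of $\Omega$, so it yields the slightly stronger conclusion of at least $m+1$ negative entries -- more than Corollary \ref{cor:fourier} needs. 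The Descartes-style sign-change sketch at the end is not needed and is the only part you have not fully justified; the main argument stands on its own without it.
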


Recall condition $(\mbox{Transverse}(\Omega,x_0,\po))$.
If $x_0$ has $k$ nonzeros,
then vectors in $Feas_{x_0}(\po)$  have at most
$k$ negative entries.  But vectors in $\cN(\Omega) = \cH(m)$ have
at least $m$ negative entries.  Therefore,
if $m > k$, $(\mbox{Transverse}(\Omega,x_0,\po))$ must hold.

By Lemma 2.2, every $(\mbox{Survive}(\Omega,F,\po))$ must hold
for every $k$-face with $k < m$. Hence $f_{k-1} (\Omega\po)  = f_{k-1}(\po)$,
for $k \leq m = \frac{1}{2}(n-1)$. \qed

\subsection{Proof of Theorem \ref{thm:adj-shift-thresh}}

The Theorem is an immediate consequence of the
following identity.

\begin{lemma}
Suppose that the row vector $1$ is
not in the row span of $A$. Then
\[
       f_k( \tilde{A} \po ) = f_{k-1}( A T^{N-1}),  0 < k < n.
\]
\end{lemma}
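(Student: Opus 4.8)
The plan is to establish a bijection between the $k$-faces of $\tilde{A}\po$ and the $(k-1)$-faces of $AT^{N-1}$, from which the identity $f_k(\tilde{A}\po) = f_{k-1}(AT^{N-1})$ follows immediately. The geometric intuition is that adjoining a row of ones to $A$ forces the first coordinate of $\tilde{A}x$ to record $1'x$, so that intersecting the image cone $\tilde{A}\po$ with the hyperplane $\{y : y_1 = 1\}$ produces exactly the polytope obtained by projecting the simplex $T^{N-1} = \{x \in \po : 1'x = 1\}$ under $A$. Since a pointed cone and any transverse affine cross-section have matching face lattices with a dimension shift of one, this already suggests the result; the work is to make the face correspondence precise and to handle degeneracies.

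First I would set up the cross-section argument. Write $\tilde{A}x = (1'x, Ax)$. The cone $\tilde{A}\po$ is pointed (its only vertex is the origin), because $1'x > 0$ for every nonzero $x \in \po$, so the hyperplane $h = \{y \in \RR^n : y_1 = 1\}$ meets every extreme ray of $\tilde{A}\po$ in exactly one point; hence $C := \tilde{A}\po \cap h$ is a polytope of dimension $n-1$ (using that $1$ is not in the row span of $A$, so $\tilde{A}$ has rank $n$ and the cross-section is genuinely $(n-1)$-dimensional). Next I would identify $C$ with $AT^{N-1}$: a point $y$ with $y_1 = 1$ lies in $\tilde{A}\po$ iff $y = \tilde{A}x$ for some $x \in \po$ with $1'x = 1$, i.e. iff $(y_2,\dots,y_n) = Ax$ for some $x \in T^{N-1}$; projecting off the (constant) first coordinate therefore gives a linear isomorphism of $C$ onto $AT^{N-1}$.

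Then I would invoke the standard fact from polytope theory (see Ziegler, or it can be stated and used directly) that for a pointed polyhedral cone $K$ and a hyperplane $h$ meeting the relative interior of $K$ transversally so that $K \cap h$ is bounded, the map $G \mapsto G \cap h$ is an inclusion-preserving bijection from the $j$-faces of $K$ to the $(j-1)$-faces of $K \cap h$, for each $j \geq 1$. Applying this with $K = \tilde{A}\po$, $j = k$, and combining with the isomorphism $C \cong AT^{N-1}$ yields $f_k(\tilde{A}\po) = f_{k-1}(C) = f_{k-1}(AT^{N-1})$ for $0 < k < n$, which is the claimed identity; Theorem \ref{thm:adj-shift-thresh} then follows by applying the known weak and strong threshold results for the randomly-projected simplex (which apply to $A$ with iid Gaussian entries) together with $f_k(\po) = \binom{N}{k} = f_{k-1}(T^{N-1})$ in the range $k < n$.

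I expect the main obstacle to be bookkeeping around general position and low-dimensional degeneracies rather than any deep difficulty: one must check that the cross-section hyperplane really is transverse to every face (not just generically), that no face of $\tilde{A}\po$ other than the apex gets collapsed, and that the face-lattice-preservation statement for cone cross-sections is applied with the correct dimension shift and correct range of $k$. A secondary point of care is confirming that the projected simplex $AT^{N-1}$ under a Gaussian $A$ has the face counts governed by $\rho_W(\delta;T^{N-1})$ and $\rho_S(\delta;T^{N-1})$ — this is exactly the Baryshnikov–Vitale extension of the Affentranger–Schneider/Vershik–Sporyshev results cited earlier in the paper, so it may be quoted rather than reproved.
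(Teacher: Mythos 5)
Your proposal is correct, but it proves the identity by a genuinely different route than the paper. You work downstream in $\RR^n$: you observe that $\tilde{A}x=(1'x,Ax)$, so $\tilde{A}\po$ is a pointed cone lying in $\{y_1\ge 0\}$ and meeting $\{y_1=0\}$ only at the apex, you identify the bounded cross-section $\tilde{A}\po\cap\{y_1=1\}$ affinely with $AT^{N-1}$, and you quote the standard base-of-a-pointed-cone fact that $G\mapsto G\cap h$ shifts the face lattice down by one dimension; this gives $f_k(\tilde{A}\po)=f_{k-1}(AT^{N-1})$ at once. The paper instead works upstream in $\RR^N$: it sets up the support-set bijection between $k$-faces $\tilde{F}$ of $\po$ and $(k-1)$-faces $F$ of $T^{N-1}$, takes a point $x_0$ in the common relative interior, uses $\mbox{Feas}_{x_0}(\po)=\mbox{Feas}_{x_0}(T^{N-1})+\mathrm{lin}(x_0)$ together with $\cN(\tilde{A})\cap\mathrm{lin}(x_0)=\{0\}$ (since $1'x_0>0$) to show that the transversality/survival conditions for $(A,F,T^{N-1})$ and $(\tilde{A},\tilde{F},\po)$ are equivalent, and counts surviving faces. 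Your route is shorter and needs no genericity, at the cost of importing the cone-cross-section lattice isomorphism from polytope theory; the paper's route stays entirely within its own transversality machinery and yields the finer face-by-face statement that corresponding faces survive simultaneously, not merely equal counts. Two small points in your write-up: every nonzero face of the cone meets $\{y_1=1\}$ automatically (faces of a cone are cones containing the apex, and all nonzero points have $y_1>0$), so no separate transversality check is needed; and your rank-$n$ remark tacitly assumes $A$ has full rank $n-1$, which is not in the lemma's hypotheses, but full-dimensionality of the cone is not actually needed for the face-count correspondence, so this is inessential.
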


\begin{proof}
We observe that there is a  natural bijection between
$k$-faces of $\po$ and the $k-1$-faces of $T^{N-1}$.
The $k-1$-faces of $T^{N-1}$ are in bijection with the corresponding support sets of cardinality $k$:
i.e. we can identify with each $k$-face $F$ the union $I$ of all supports of all members of the face.
Similarly to each support set $I$ of cardinality $k$ there is a unique $k$-face $\tilde{F}$ of $\po$
consisting of all points in $\po$
whose support lies in $I$.   Composing bijections $F \leftrightarrow I \leftrightarrow \tilde{F}$
we have the bijection $F \leftrightarrow \tilde{F}$.

Concretely, let $x_0$ be a point in the relative interior of
some $k-1$-face $F$ of $T^{N-1}$.  Then $x_0$ has $k$ nonzeros.
$x_0$ is also in the relative interior of the $k$-face $\tilde{F}$ of $\po$
Conversely, let $y_0$ be a point in the relative interior
of some $k$-face of $\po$; then $x_0 = (1'y_0)^{-1} y_0$
is a point in the relative interior of a $k-1$-face of $T^{N-1}$.

The last two paragraphs show that for each pair
of corresponding faces $(F,\tilde{F})$,
we may find a point  $x_0$ in both the relative interior
of  $\tilde{F} $ and also of the relative interior
of $F$. For such $x_0$,
\[
 \mbox{Feas}_{x_0}(\po) = \mbox{Feas}_{x_0} (T^{N-1}) + lin(x_0).
\]
Clearly $\cN(\tilde{A}) \cap lin(x_0) = \{ 0 \}$, because $1'x_0  > 0$.
We conclude that the  following are equivalent:

\begin{tabular}{ll}
(Transverse($A,x_0,T^{N-1}$)) & ${\cal N}(A)\cap \mbox{Feas}_{x_0}(T^{N-1})=\{0\}$.\\
(Transverse($\tilde{A},x_0,\po$)) & ${\cal N}(\tilde{A})\cap \mbox{Feas}_{x_0}(\po)=\{0\}$. 
\end{tabular}

Rephrasing \cite{DoTa05_signal}, the following are equivalent for $x_0$ a
point in the relative interior of $F$:

\begin{tabular}{ll}
(Survive($A,F,T^{N-1}$)) & $AF$ is a $k-1$-face of $A T^{N-1}$, \\
(Transverse($A,x_0,T^{N-1}$)) & ${\cal N}(A)\cap \mbox{Feas}_{x_0}(T^{N-1})=\{0\}$.
\end{tabular}

We conclude that for two corresponding faces $F$, $\tilde{F}$,
the following are equivalent:

\begin{tabular}{ll}
(Survive($A,F,T^{N-1}$)): & $AF$ is a $k-1$-face of $A T^{N-1}$, \\
(Survive($\tilde{A},\tilde{F},\po$)): & $\tilde{A}\tilde{F}$ is a $k$-face of $\tilde{A} \po$ .
\end{tabular}

Combining this with the natural bijection $F \leftrightarrow \tilde{F}$, the lemma is proved.
\end{proof}

Acknowledgments.

Art Owen suggested that we pay attention to Wendel's Theorem.
We also thank Goodman, Pollack, and Schneider for providing 
scholarly background.

\bibliography{neigh}

\def\cprime{$'$}
\providecommand{\bysame}{\leavevmode\hbox to3em{\hrulefill}\thinspace}
\providecommand{\MR}{\relax\ifhmode\unskip\space\fi MR }
\providecommand{\MRhref}[2]{%
  \href{http://www.ams.org/mathscinet-getitem?mr=#1}{#2}
}
\providecommand{\href}[2]{#2}
\begin{thebibliography}{10}

\bibitem{AffenSchnei}
Fernando Affentranger and Rolf Schneider, \emph{Random projections of regular
  simplices}, Discrete Comput. Geom. \textbf{7} (1992), no.~3, 219--226.
  \MR{MR1149653 (92k:52008)}

\bibitem{Bary}
Yuliy~M. Baryshnikov, \emph{Gaussian samples, regular simplices, and
  exchangeability}, Discrete Comput. Geom. \textbf{17} (1997), no.~3, 257--261.
  \MR{MR1432063 (98a:52006)}

\bibitem{BaryVitale}
Yuliy~M. Baryshnikov and Richard~A. Vitale, \emph{Regular simplices and
  {G}aussian samples}, Discrete Comput. Geom. \textbf{11} (1994), no.~2,
  141--147. \MR{MR1254086 (94j:60017)}

\bibitem{Bolker}
Ethan~D. Bolker, \emph{A class of convex bodies}, Trans. Amer. Math. Soc.
  \textbf{145} (1969), 323--345. \MR{MR0256265 (41 \#921)}

\bibitem{BoroHenk}
K{\'a}roly B{\"o}r{\"o}czky, Jr. and Martin Henk, \emph{Random projections of
  regular polytopes}, Arch. Math. (Basel) \textbf{73} (1999), no.~6, 465--473.
  \MR{MR1725183 (2001b:52004)}

\bibitem{Elad_nonnegative}
A.~M. Bruckstein, M.~Elad, and M.~Zibulevsky, \emph{On the uniqueness of
  non-negative sparse and redundant representations}, ICASSP 2008 special
  session on Compressed Sensing, Las Vegas, Nevada., 2008.

\bibitem{Do05_polytope}
David~L. Donoho, \emph{High-dimensional centrally-symmetric polytopes with
  neighborliness proportional to dimension}, Disc. Comput. Geometry \textbf{35}
  (2006), no.~4, 617--652.

\bibitem{Do05_signal}
\bysame, \emph{Neighborly polytopes and sparse solutions of underdetermined
  linear equations}, Stanford University, Technical Report (2006).

\bibitem{DJHS92}
David~L. Donoho, Iain~M. Johnstone, Jeffrey~C. Hoch, and Alan~S. Stern,
  \emph{Maximum entropy and the nearly black object}, Journal of the Royal
  Statistical Society, Series B (Methodological) \textbf{54} (1992), no.~1,
  41--81.

\bibitem{DoTa05_polytope}
David~L. Donoho and Jared Tanner, \emph{Neighborliness of randomly-projected
  simplices in high dimensions}, Proc. Natl. Acad. Sci. USA \textbf{102}
  (2005), no.~27, 9452--9457.

\bibitem{DoTa05_signal}
\bysame, \emph{Sparse nonnegative solutions of underdetermined linear equations
  by linear programming}, Proc. Natl. Acad. Sci. USA \textbf{102} (2005),
  no.~27, 9446--9451.

\bibitem{DoTa07_FiniteN}
\bysame, \emph{Exponential bounds implying construction of neighborly
  polytopes, error-correcting codes and compressed sensing matrices by random
  sampling}, preprint (2007).

\bibitem{DoTa07_CF}
\bysame, \emph{Counting faces of randomly-projected polytopes when the
  projection radically lowers dimension}, J. AMS (2008).

\bibitem{DoTa08_universality}
\bysame, \emph{Sharp thresholds in compressed sensing are universal across
  matrix ensembles},  (2008).

\bibitem{Fuchs}
Jean-Jacques Fuchs, \emph{On sparse representations in arbitrary redundant
  bases}, IEEE Trans. Inform. Theory \textbf{50} (2004), no.~6, 1341--1344.
  \MR{MR2094894}

\bibitem{Gru67}
Branko Gr{\"u}nbaum, \emph{Convex polytopes}, second ed., Graduate Texts in
  Mathematics, vol. 221, Springer-Verlag, New York, 2003, Prepared and with a
  preface by Volker Kaibel, Victor Klee and G\"unter M.\ Ziegler.
  \MR{MR1976856}

\bibitem{RV08}
Mark Rudelson and Roman Vershynin, \emph{The smallest singular value of a
  rectangular random matrix}, preprint (2008).

\bibitem{VerSpor}
A.~M. Vershik and P.~V. Sporyshev, \emph{Asymptotic behavior of the number of
  faces of random polyhedra and the neighborliness problem}, Selecta Math.
  Soviet. \textbf{11} (1992), no.~2, 181--201. \MR{MR1166627 (93d:60017)}

\bibitem{We62}
James~G. Wendel, \emph{A problem in geometric probability}, Mathematics
  Scandinavia \textbf{11} (1962), 109--111.

\bibitem{Ziegler}
G{\"u}nter~M. Ziegler, \emph{Lectures on polytopes}, Graduate Texts in
  Mathematics, vol. 152, Springer-Verlag, New York, 1995. \MR{MR1311028
  (96a:52011)}

\end{thebibliography}

\bibliographystyle{amsplain}
\end{document}